\documentclass[12pt,a4paper]{amsart} 
\usepackage[utf8]{inputenc}
\usepackage[english]{babel}

\usepackage{amsmath}         
\usepackage{amsfonts}
\usepackage{amssymb}
\usepackage{color}

\usepackage{amsthm}

\newtheorem{theorem}{Theorem} [section]
\newtheorem{proposition}[theorem]{Proposition}
\newtheorem{corollary}[theorem]{Corollary} 
\newtheorem{lemma}[theorem]{Lemma}
\newtheorem{definition}[theorem]{Definition}

\newtheorem{conjecture}{Conjecture}

\theoremstyle{remark}
\newtheorem{remark}[theorem]{Remark}

\newcommand{\seg}[1]{\overline{#1}}
\newcommand{\vv}[1]{\overrightarrow{#1}}

\usepackage{graphics}
\usepackage{epsfig}

\usepackage{url}
\usepackage{hyperref}

\usepackage[a4paper,top=2.5cm,bottom=2.8cm,
left=3.2cm,right=3.2cm]{geometry}
\markleft{\hfill \textsc{Perspective Central Triangles Formed from a Triangle and a Transversal} \hfill}
\long\def\void#1{}

\begin{document}
International Journal of  Computer Discovered Mathematics (IJCDM) \\
ISSN 2367-7775 \copyright IJCDM \\
Volume 11, 2026 pp. xx--yy  \\
web: \url{http://www.journal-1.eu/} \\
Received xx July 2026. Published on-line xx MMM 2026\\ 

\copyright The Author(s) This article is published 
with open access.\footnote{This article is distributed under the terms of the Creative Commons Attribution License which permits any use, distribution, and reproduction in any medium, provided the original author(s) and the source are credited.} \\
\bigskip
\bigskip

\begin{center}
	{\Large \textbf{Perspective Central Triangles Formed\\from a Triangle and a Transversal}} \\
	\medskip
	\bigskip
        \bigskip

	\textsc{Stanley Rabinowitz$^a$ and Ercole Suppa$^b$} \\

	$^a$ 545 Elm St Unit 1,  Milford, New Hampshire 03055, USA \\
	e-mail: \href{mailto:stan.rabinowitz@comcast.net}{stan.rabinowitz@comcast.net}\footnote{Corresponding author} \\
	web: \url{http://www.StanleyRabinowitz.com/} \\
	
	$^b$ Via B. Croce 54, 64100 Teramo, Italia \\
	e-mail: \href{mailto:ercolesuppa@gmail.com}{ercolesuppa@gmail.com} \\
	web: \url{https://www.esuppa.it} \\
	
\bigskip

\end{center}
\bigskip

\newcommand{\LL}{\ell }

\textbf{Abstract.}
Let $\LL$ be a line not passing through any vertex of a triangle $ABC$ and not parallel to any side.
Line $\LL$ meets the sidelines $BC$, $CA$, $AB$ of $\triangle ABC$ at points $D$, $E$, $F$,
respectively.
We consider three of the triangles that are formed: $\triangle AEF$, $\triangle BFD$, and $\triangle CDE$.
Placing a fixed triangle center
(such as the incenter, centroid, or orthocenter) in each of these three triangles
determines a \emph{central triangle}. We investigate when the reference triangle and
its central triangle are perspective, i.e., when the lines $AD$, $BE$, and $CF$ are concurrent.

A computer search over the first 1000 centers in the Encyclopedia of Triangle
Centers suggested numerous examples of concurrence. We give elementary geometric
proofs for the circumcenter, orthocenter, and Clawson point, develop a general
criterion for concurrence, and identify several operations, including isogonal
and isotomic conjugation, that preserve this property.

Our main result is a complete characterization of the center functions whose
associated cevians are concurrent for every transversal $\LL$. This yields
an explicit normal form for such centers. We also show
that concurrence depends only on the direction of the transversal, and we
investigate the special case in which the transversal is parallel to the Euler
line.

\bigskip
\textbf{Keywords.} triangle centers, computer-discovered mathematics,
Euclidean geometry, GeometricExplorer.

\medskip
\textbf{Mathematics Subject Classification (2020).} 51M04, 51-08.

\newcommand{\ru}{\rule[-8pt]{0pt}{20pt}}

\newcommand{\red}[1]{\textcolor{red}{#1}}

\newenvironment{code}[2]
{
\medskip
\hspace{#1}%
\begin{minipage}{#2}
\color{blue}
}
{
\color{black}
\smallskip
\end{minipage}%
}

\bigskip
\bigskip
%
\section{Introduction}
\label{section:introduction}

Given a reference triangle $ABC$, suppose three associated triangles are constructed from it.
If a fixed triangle center (such as the incenter, centroid, or orthocenter) is chosen in each of these three triangles,
the resulting three points determine another triangle, called a \emph{central triangle}.

One way to form three triangles is by picking an arbitrary point $D$ in the plane of $\triangle ABC$,
but not on any of its sidelines. Lines are then drawn from point $D$ to the vertices of $\triangle ABC$.
This determines the three \emph{side triangles}, $\triangle DBC$,
$\triangle DCA$, and $\triangle DAB$ as shown in Figure~\ref{fig:D}.
Each side triangle has $D$ as one vertex and two of the vertices of $\triangle ABC$
as its other two vertices.

\begin{figure}[h!t]
\centering
\includegraphics[width=0.9\linewidth]{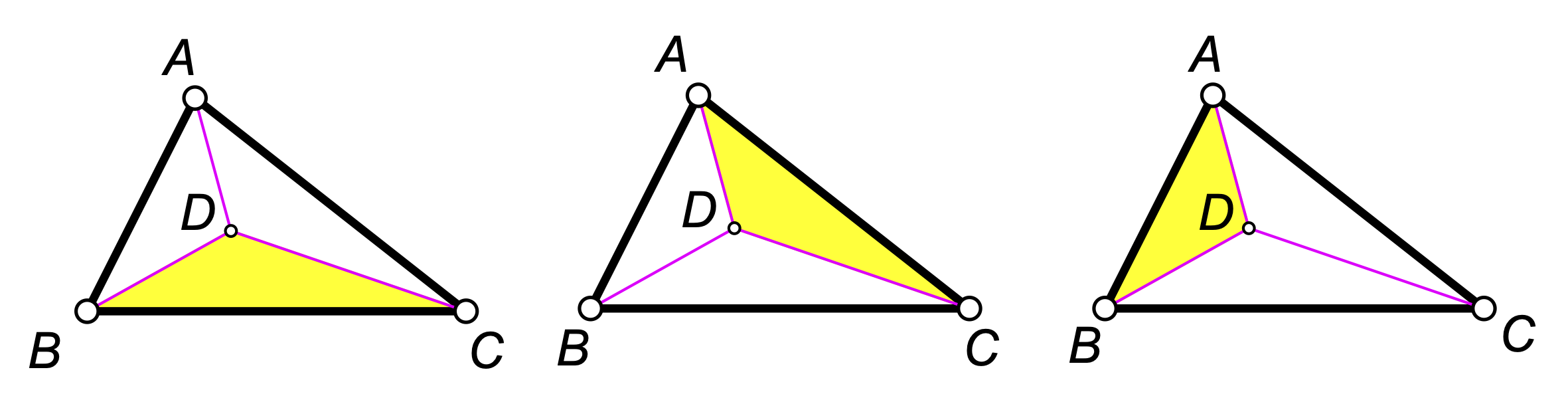}
\caption{Three side triangles determined by $D$}
\label{fig:D}
\end{figure}

Let $X_n$ denote the $n$-th named triangle cataloged in the Encyclopedia of Triangle Centers,
\cite{ETC}.
In \cite{central}, the center $X_n$ (for $n$ ranging from 1 to 1000) was placed in each of the
three side triangles, forming a central triangle.
We investigated the geometric relationship between this central triangle and the reference triangle.
In particular, we determined when the two triangles were similar, homothetic, or perspective.

Another way to form three triangles is to start with a line $\LL$ not passing through
any vertex of $\triangle ABC$ and not parallel to any side. Three points $D$, $E$, and $F$ are determined by
where line $\LL$ intersects the sidelines $BC$, $CA$, and $AB$; we call them the
\emph{pierce points} of $\LL$.
This creates three \emph{corner triangles} associated with the line $\LL$: $\triangle AEF$, $\triangle BFD$, and $\triangle CDE$
as shown in Figure~\ref{fig:L}.
Each corner triangle is bounded by $\LL$ and two of the sidelines of $\triangle ABC$.

\begin{figure}[h!t]
\centering
\includegraphics[width=1.0\linewidth]{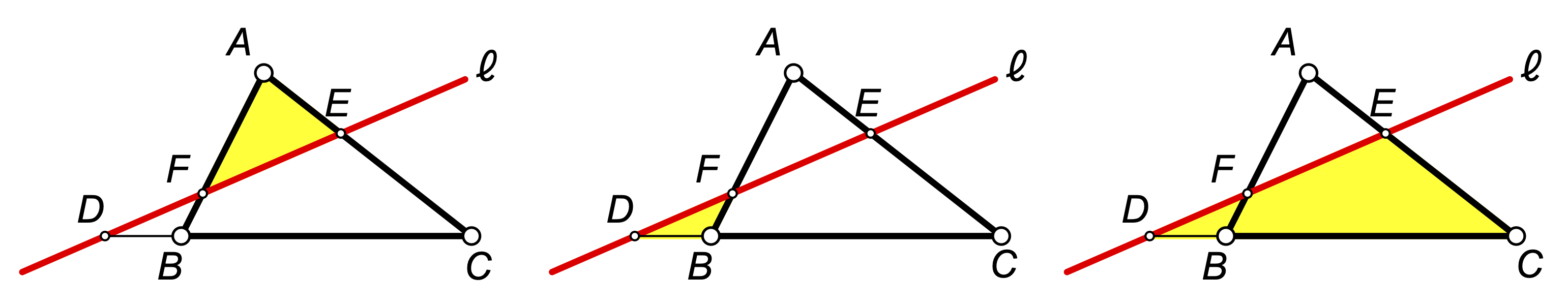}
\caption{Three corner triangles determined by $\LL$}
\label{fig:L}
\end{figure}

In this paper, we will place triangle centers $X_n$ (for $n$ ranging from 1 to 1000)
in each of the three corner triangles forming a central triangle associated with the line $\LL$.
We will determine when this central triangle is perspective with the reference triangle.

\newpage

Figure~\ref{fig:X2} illustrates the construction using the centroid $X_2$.
Here $G$, $H$, and $I$ are the centroids of $\triangle AEF$, $\triangle BFD$, and $\triangle CDE$, respectively.
The red triangle is the resulting central triangle.
In this example the triangles are not perspective. That is, lines $AG$, $BH$, and $CI$ are not concurrent.

\begin{figure}[h!t]
\centering
\includegraphics[width=0.8\linewidth]{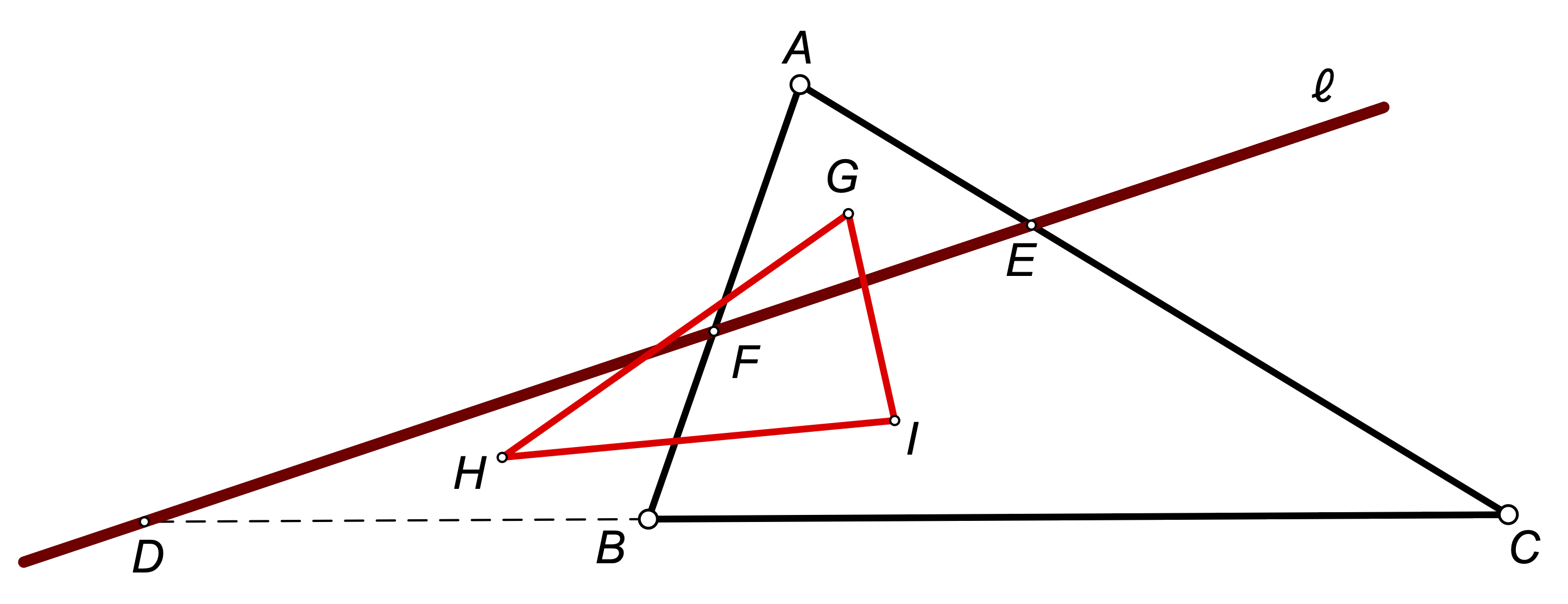}
\caption{Central triangle determined by $\LL$}
\label{fig:X2}
\end{figure}

We used a computer program called GeometricExplorer to compare the reference
triangle with its central triangle.
For an arbitrary triangle $ABC$ and an arbitrary line $\LL$,
the program constructs the three corner triangles formed by $\LL$ and the triangle.
It then places the center $X_n$ in each of these corner triangles, for $1\leq n\leq1000$
(omitting centers at infinity).
The program then determines whether the resulting central triangle is perspective with the reference triangle.

GeometricExplorer uses numerical coordinates (to 15 digits of precision) for locating
all the points. Thus, a relationship found by this program does not constitute a proof that the result is correct,
but gives us compelling evidence for the validity of the result.

The computer search suggested many perspectivities.  The remainder of the paper
establishes these results rigorously, using elementary geometry whenever possible,
and barycentric coordinates otherwise.

We assume the reader is familiar with barycentric coordinates.  For two centers
$P$ and $Q$, we write $P\equiv Q$ to mean that the centers are the same, so that
their barycentric coordinates are proportional.  If their center functions \cite{center} are
$f$ and $g$, we will also write $f\equiv g$.

The paper proceeds as follows.  Section~\ref{section:prelim} develops a general barycentric
criterion for concurrence.  Section~\ref{section:geom} gives elementary geometric proofs for three
representative centers: the circumcenter, the orthocenter, and the Clawson point.
Sections~\ref{sec:parity} and \ref{sec:ops} use these examples to build larger families of concurrent
centers and to identify operations that preserve concurrence.  Section~\ref{sec:universal} gives a
complete characterization of the center functions that are concurrent for every
admissible transversal, including an explicit trigonometric normal form.  The
remaining sections apply these results to the centers found by the computer
search and then study the special case of transversals parallel to the Euler line.

\section{Results}

An \emph{admissible transversal} is a line that does not pass through any vertex of $\triangle ABC$
and is not parallel to any side.

\begin{theorem}
\label{thm:persp}
Let $\LL$ be an arbitrary admissible transversal to $\triangle ABC$.
Three points $D$, $E$, and $F$ are determined by
where line $\LL$ intersects the sidelines $BC$, $CA$, and $AB$, respectively.
Let the $X_n$-points of corner triangles $AEF$, $BFD$, and $CDE$ be $G$, $H$, and $I$ respectively.
If
$$n\in\{3, 4, 19, 24, 25, 48, 49, 63, 68, 69, 92, 93, 184, 186,$$
$$\qquad264, 265, 304, 305, 317, 328, 340, 378, 563, 847\},$$
then triangles $ABC$ and $GHI$ are perspective.
\end{theorem}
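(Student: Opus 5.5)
The plan is to reduce perspectivity to a trigonometric form of Ceva's theorem, with everything measured in the corner triangles. The cevian $AG$ lies inside the angle of $\triangle AEF$ at $A$, which is bounded by lines $AB$ and $AC$. Hence $AG$ is determined by the ratio $d(G,AB):d(G,AC)$. If the $X_n$ center has trilinear center function $f(a,b,c)$, equivalently written as a function of the angles $f(A,B,C)$, then in $\triangle AEF$ this ratio is the ratio of the trilinears of $G$ relative to the sides $AF$ and $AE$. So it equals $f(\hat F,\hat A,\hat E)/f(\hat E,\hat F,\hat A)$, where $\hat A,\hat E,\hat F$ are the angles of $\triangle AEF$. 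With signed distances, $\sin\angle BAG/\sin\angle GAC$ is this ratio up to a sign fixed by the configuration. By the trigonometric Ceva theorem, $AG,BH,CI$ concur if and only if the product of the three signed ratios equals $1$. I will record this as the working criterion.

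The second step describes the angles of the corner triangles. The angle of $\triangle AEF$ at $A$ is either $A$ or $\pi-A$. Its angle at $E$ is the angle between $\LL$ and line $CA$, so it equals some $\varepsilon$ or $\pi-\varepsilon$. The same angle between $\LL$ and $CA$ reappears, possibly supplemented, as the angle at $E$ of $\triangle CDE$; likewise for $D$ and $F$. Now suppose the center function has the separable form $f=g(\hat A)\,g'(\ldots)$, meaning trilinears $g(A):g(B):g(C)$ for a single function $g$ with $g(\pi-x)=\pm g(x)$. Then the ratio at $A$ is $g(\hat F)/g(\hat E)$. In the Ceva product each of $g(\varepsilon_D),g(\varepsilon_E),g(\varepsilon_F)$ appears once in a numerator and once in a denominator, up to the parity sign. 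So the product telescopes to $\pm1$. The case analysis on which angles are supplemented, together with the orientation signs of the cevian ratios, should then show that the sign is $+1$. I will do this by fixing a standard configuration, with $\LL$ cutting two sides internally and one externally, and checking that any other configuration changes the signs consistently.

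The third step is to verify, for each $n$ in the list, that the trilinears of $X_n$ are of the form $g(A):g(B):g(C)$ with $g$ a product of powers of $\sin,\cos,\tan$ and $a=2R\sin A$; more generally, any $g$ with $g(\pi-x)=\pm g(x)$ suffices. For example:
\begin{itemize}
\item $X_3=\cos A$, $X_4=\sec A$, and $X_{19}=\tan A$;
\item $X_{25}=\sin A\tan A$, $X_{48}=\sin A\cos A$, and $X_{63}=\cot A$;
\item $X_{69}=\cos A/\sin^2 A$, and $X_{92}=1/(\sin A\cos A)$.
\end{itemize}
Since $\sin(\pi-x)=\sin x$, $\cos(\pi-x)=-\cos x$ and $\tan(\pi-x)=-\tan x$, all of these qualify.

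The main obstacle is twofold. The first part is the sign bookkeeping in the Ceva product. The second part is the centers in the list whose ETC trilinears are not visibly of product type, namely $X_{24},X_{68},X_{184},X_{186},X_{264},X_{265},X_{328},X_{340},X_{378},X_{563},X_{847}$ and similar. For these I would rewrite the ETC formula in angle form and look for such a $g$. For instance, $X_{24}$ has trilinears $\sec A\cos 2A$, and $X_{186}$ should be a function of $\cos 2A$ and $\cos A$. The test in each case is whether $g(\pi-x)=\pm g(x)$, which holds whenever $g$ is built from $\cos 2x$, $\sin x$, $\cos x$ and $\tan x$.

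If some listed center resists this form, because its function genuinely involves $B$ and $C$, I would fall back on direct barycentric computation. In that approach I would parametrize $\LL$ by its direction and one intercept, compute $G,H,I$, and evaluate the $3\times3$ concurrency determinant for $AG,BH,CI$.
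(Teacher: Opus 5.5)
\textbf{The sign step fails as stated.} Your framework is the right one: read off $AG$ from the ratio of signed distances of $G$ to $CA$ and $AB$, apply trigonometric Ceva, and telescope the product over the pierce points $D,E,F$. It is essentially the mechanism behind the paper's argument, which uses the barycentric criterion $S+T=0$ and the leg--swap ratio instead of Ceva. But the criterion you state is false, and the sign analysis you postpone is where the content lies.

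It is not enough that $g(\pi-x)=\pm g(x)$. You need $g(\pi-x)=-g(x)$, i.e.\ $g$ odd about $\pi/2$.
\begin{itemize}
\item The incenter refutes your version. The function $g\equiv 1$ satisfies $g(\pi-x)=g(x)$. Yet exactly one corner triangle has an external apex angle (Lemma~\ref{lem:flip}). So the three cevians are the internal bisectors at two vertices and the external bisector at the third, and these never concur.
\item Likewise $X_2$ ($g=\csc$) and $X_6$ ($g=\sin$) are even about $\pi/2$. The paper shows that $X_1$, $X_2$, $X_6$ are concurrent for no admissible transversal.
\end{itemize}
So the orientation signs do not ``work out to $+1$'' on their own; they multiply to $-1$. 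What restores concurrence is the single factor $g(\pi-\theta)/g(\theta)=-1$ at the unique pierce point where the two corner angles are supplementary. Your heuristic that every $g$ built from $\cos 2x$, $\sin x$, $\cos x$, $\tan x$ qualifies has the same error: $g=\sin x$ gives $X_6$.

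\textbf{The repair is short.} Write the signed Ceva product as $\kappa\prod_{P\in\{D,E,F\}} g(\theta_P)/g(\theta'_P)$. Here $\theta_P,\theta'_P$ are the two corner angles at $P$ (equal or supplementary). The sign $\kappa=\pm1$ collects the signs converting distances measured in a corner triangle into distances measured in $\triangle ABC$. Both $\kappa$ and the set of supplementary pierce points depend only on the configuration, not on $g$.
\begin{itemize}
\item For $g\equiv 1$ the product is just $\kappa$, and the incenter picture above gives $\kappa=-1$.
\item By Lemma~\ref{lem:flip} exactly one pierce point is supplementary. So for any odd $g$ (away from its zeros and poles) the product is $\kappa\cdot(-1)=+1$.
\item Alternatively, note the product has the same value for every odd $g$, and calibrate with $X_4$ ($g=\sec$), whose three cevians are all perpendicular to $\ell$ and so concur at infinity.
\end{itemize}
This spares you any case-by-case orientation analysis, and it also shows that every even $g$ fails.

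What remains is to check that each of the $24$ trilinear functions is odd about $\pi/2$. They are, for example $\sec A\cos 2A$, $\cos 3A$, $\sec A+2\cos A$, $\sin 3A/\sin 2A$ and $\sin 2A/\sin 3A$. This matches the paper's table: the barycentric function $\sin A\cdot g(A)$ is odd exactly when $g$ is, and every entry there is $\cos A$ or $\sin A\cos A$ times a function of $\cos^2 A$. Since all listed centers depend on $A$ alone, your barycentric fallback is never needed.
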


Figure~\ref{fig:X19} shows the case where $n=19$.
The red triangle is the central triangle.
The green point is the \emph{perspector} (where $AG$, $BH$, and $CI$ meet).

\begin{figure}[h!t]
\centering
\includegraphics[width=0.7\linewidth]{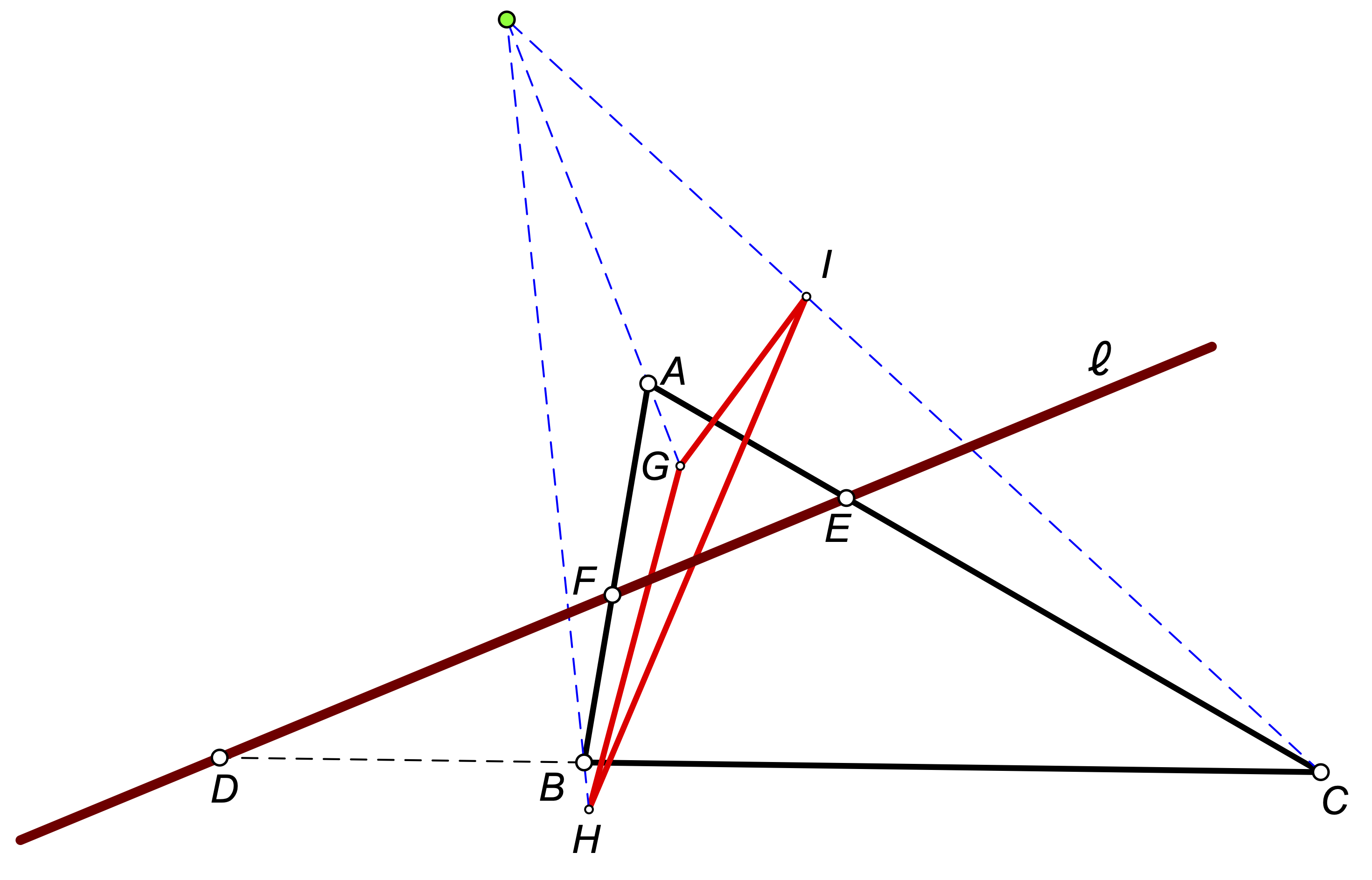}
\caption{Central triangle formed by $\LL$ and $X_{19}$ points}
\label{fig:X19}
\end{figure}

\begin{remark}
For $n\leq 1000$, our computations found no additional centers for which
triangles $ABC$ and $GHI$ are always perspective.
\end{remark}

\section{Preliminary Remarks}
\label{section:prelim}

Before proving Theorem~2.1, we establish a general criterion for deciding when a
triangle center is concurrent.

Let $\LL$ be an admissible transversal of $\triangle ABC$, meeting the sidelines
$BC$, $CA$, and $AB$ at $D$, $E$, and $F$, respectively.
The associated corner triangles are
\[
T_A=AEF,\qquad
T_B=BFD,\qquad
T_C=CDE.
\]

Let $X$ be a triangle center determined by the barycentric center function
$f$.
For a triangle with side lengths
$\lambda_1,\lambda_2,\lambda_3$,
the barycentric coordinates of $X$ are
\begin{equation}\label{eq:cf}
   \bigl(f(\lambda_1,\lambda_2,\lambda_3):f(\lambda_2,\lambda_3,\lambda_1):
   f(\lambda_3,\lambda_1,\lambda_2)\bigr),
\end{equation}
where $f$ is homogeneous and symmetric in its last two arguments.

Let $G,H,$ and $I$ denote the $X$-points of
$T_A,T_B,$ and $T_C$, respectively.
Our goal is to determine when the cevians
$AG$, $BH$, and $CI$ are concurrent.

We say that a center \emph{is concurrent
for $\LL$} if the three lines $AG$, $BH$, $CI$ are concurrent.
If the center $X$ is concurrent for all lines $\LL$, we simply say $X$ is~concurrent.


Throughout the paper, we use homogeneous barycentric coordinates with respect to $\triangle ABC$, writing points
as $(x:y:z)$ and lines as $[u:v:w]$ as in \cite{Yiu}.
Taking $\LL=[u:v:w]$, the admissibility of $\LL$ is equivalent to
$u,v,w$ being nonzero and pairwise distinct.

Using the formula for the intersection of two lines in barycentric coordinates \cite[eq.~(5)]{Grozdev},
we find that the coordinates for points $D$, $E$, and $F$ are
\begin{equation}
\label{eq:DEF}
   D=(0:w:-v),\qquad E=(-w:0:u),\qquad F=(v:-u:0).
\end{equation}
Lengths $\seg{PQ}$ measured along a fixed line are signed, while
$|PQ|$ denotes the unsigned length.
All angle equalities may be read as
equalities of directed angles modulo $180^\circ$, which renders every statement
independent of the position of $\LL$ relative to $\triangle ABC$.

If $P$ has barycentric coordinates $(p:q:r)$, then the \emph{normalized barycentric coordinates} of $P$,
namely $(\frac{p}{p+q+r}:\frac{q}{p+q+r}:\frac{r}{p+q+r})$, are denoted by $\widehat P$.
These have the property that the sum of the three component entries is 1.

\begin{theorem}[Concurrence Condition]
\label{thm:crit}
Order the corner triangles as $T_A=(A,E,F)$, $T_B=(B,F,D)$,
$T_C=(C,D,E)$ and set
\begin{equation}
\label{eq:s}
\begin{aligned}
  s_A&=f(|FA|,|AE|,|EF|), & t_A&=f(|AE|,|EF|,|FA|),\\
  s_B&=f(|DB|,|BF|,|FD|), & t_B&=f(|BF|,|FD|,|DB|),\\
  s_C&=f(|EC|,|CD|,|DE|), & t_C&=f(|CD|,|DE|,|EC|),
\end{aligned}
\end{equation}
where $f$ is a center function.
Write $S=s_As_Bs_C$ and $T=t_At_Bt_C$. Then $AG$, $BH$, $CI$ are concurrent if
and only if $S+T=0$.
\end{theorem}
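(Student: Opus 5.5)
The plan is to compute each of $G$, $H$, $I$ in barycentric coordinates with respect to $\triangle ABC$, read off where the cevians $AG$, $BH$, $CI$ meet the opposite sidelines, and then apply Ceva's theorem in its barycentric form.

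\textbf{Step 1: the point $G$.} In the corner triangle $T_A=(A,E,F)$:
\begin{itemize}
\item the side opposite $A$ is $EF$, the side opposite $E$ is $FA$, and the side opposite $F$ is $AE$;
\item by \eqref{eq:cf}, $G$ has barycentric coordinates relative to $(A,E,F)$ equal to $\bigl(f(|EF|,|FA|,|AE|):s_A:t_A\bigr)$.
\end{itemize}
So $G=\alpha\widehat A+s_A\widehat E+t_A\widehat F$ for some weight $\alpha$. By \eqref{eq:DEF} the normalized points are
\[
\widehat E=\frac{(-w,0,u)}{u-w},\qquad \widehat F=\frac{(v,-u,0)}{v-u},\qquad \widehat D=\frac{(0,w,-v)}{w-v}.
\]
These denominators are nonzero precisely because $\LL$ is admissible. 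The $A$-weight $\alpha$ affects only the first coordinate. Hence the $y$- and $z$-coordinates of $G$ are
\[
y_G=\frac{-u\,t_A}{v-u},\qquad z_G=\frac{u\,s_A}{u-w},
\]
and the cevian $AG$ meets $BC$ at $(0:y_G:z_G)$.

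\textbf{Step 2: the points $H$ and $I$.} The same computation applies cyclically.
\begin{itemize}
\item In $T_B=(B,F,D)$ the weight $s_B$ sits on $F$ and $t_B$ on $D$. This gives
\[
x_H=\frac{v\,s_B}{v-u},\qquad z_H=\frac{-v\,t_B}{w-v}.
\]
\item In $T_C=(C,D,E)$ the weight $s_C$ sits on $D$ and $t_C$ on $E$. This gives
\[
x_I=\frac{-w\,t_C}{u-w},\qquad y_I=\frac{w\,s_C}{w-v}.
\]
\end{itemize}

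\textbf{Step 3: Ceva's condition.} The barycentric form of Ceva's theorem says the three cevians are concurrent iff
\[
\frac{y_G}{z_G}\cdot\frac{z_H}{x_H}\cdot\frac{x_I}{y_I}=1.
\]
Substituting, the factors $u,v,w$ cancel, and so do the differences $u-w$, $v-u$, $w-v$. Each of the three ratios carries one minus sign, so the product equals $-\dfrac{t_At_Bt_C}{s_As_Bs_C}=-T/S$. The Ceva condition therefore becomes $-T/S=1$, that is, $S+T=0$.

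\textbf{Main obstacle: degenerate cases.} The algebra itself is mechanical; the step needing care is justifying the use of Ceva.
\begin{itemize}
\item We must ensure that each of $G,H,I$ differs from its vertex, and that the cevian feet are genuine points, so that the ratios are defined.
\item If some $s_X$ or $t_X$ vanishes, the cevian passes through a vertex of $\triangle ABC$. In that case I would clear denominators and argue directly with the cross-multiplied form $y_Gz_Hx_I=z_Gx_Hy_I$, whose equivalence with concurrence holds in general.
\item That form still reduces to $S+T=0$ after cancelling the nonzero factors $uvw$ and the differences.
\end{itemize}
I would also note that the signed-versus-unsigned length issue causes no trouble. The center function is evaluated on the actual side lengths of each corner triangle, and the vertex order of each $T_X$ has been chosen so that $s_X$ and $t_X$ attach to the correct vertices.
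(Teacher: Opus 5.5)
Your argument is correct and is essentially the paper's proof: both convert the $X$-point of each corner triangle to $\triangle ABC$-coordinates as $s_V$ and $t_V$ times the normalized pierce points, read off the cevian traces, and impose concurrence. The paper phrases the last step as the vanishing of the $3\times 3$ determinant of the cevians' line coordinates, and because that matrix has zero diagonal the determinant is exactly your cross-multiplied Ceva identity $y_Gz_Hx_I=z_Gx_Hy_I$; your explicit handling of the cases where some $s_V$ or $t_V$ vanishes is a small addition that the paper leaves implicit.
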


\begin{proof}
We will first work in barycentric coordinates relative to the corner triangle
$T_A=(A,E,F)$.
By the definition of the center function, the $X$-point, $G$, has coordinates
\[
G=(*:s_A:t_A),
\]
where
\[
s_A=f(|FA|,|AE|,|EF|),\qquad
t_A=f(|AE|,|EF|,|FA|),
\]
with the $A$-coordinate $\ast$ playing no further role.
The cevian $AG$ meets the side $EF$ at
\[
D_A=(0:s_A:t_A).
\]
The corresponding points for $B$ and $C$ are defined similarly.

By the barycentric change of coordinates rule, \cite[eq.~(10)]{Grozdev},
we find that the coordinates of $D_A$ relative to $\triangle ABC$ are $D_A=s_A\widehat E+t_A\widehat F$,
where $\widehat E=(-w,0,u)/(u-w)$, and $\widehat F=(v,-u,0)/(v-u)$.

A line through
$A=(1:0:0)$ has the form $[0:\mu:\nu]$ and passes through $(d_1:d_2:d_3)$ if and only if
$\mu:\nu=d_3:-d_2$.
Reading the last two coordinates of
$D_A$ gives, after clearing the common factor $u$,
\[
   AG=\Bigl[\,0\ :\ \tfrac{s_A}{u-w}\ :\ \tfrac{t_A}{v-u}\,\Bigr],
\]
and cyclically
\[
   BH=\Bigl[\,\tfrac{t_B}{w-v}\ :\ 0\ :\ \tfrac{s_B}{v-u}\,\Bigr],\qquad
   CI=\Bigl[\,\tfrac{s_C}{w-v}\ :\ \tfrac{t_C}{u-w}\ :\ 0\,\Bigr].
\]
The three lines are concurrent precisely when the determinant of their line
coordinates is zero~\cite[eq~(6)]{Grozdev}.
Since each diagonal entry is zero, the determinant reduces to
\[
  \det\!\begin{pmatrix}
      0 & \frac{s_A}{u-w} & \frac{t_A}{v-u}\\[2pt]
      \frac{t_B}{w-v} & 0 & \frac{s_B}{v-u}\\[2pt]
      \frac{s_C}{w-v} & \frac{t_C}{u-w} & 0
  \end{pmatrix}
  =\frac{s_As_Bs_C+t_At_Bt_C}{(u-w)(v-u)(w-v)}
  =\frac{S+T}{(u-w)(v-u)(w-v)} .
\]
The denominator is nonzero because $\LL$ is admissible.
Hence the determinant vanishes exactly when
$S+T=0$.
\end{proof}

\begin{remark}
Theorem~\ref{thm:crit} reduces the geometric problem of concurrence to the algebraic
identity $S+T=0$.
All subsequent proofs amount to showing that this identity holds for particular
center functions.
\end{remark}

\section{Three geometric proofs}
\label{section:geom}

\subsection{The circumcenter $X_3$}

\begin{lemma}\label{lem:isog}
Let $PQR$ have circumcenter $O$. Then $PO$ and the perpendicular from $P$ to $QR$
are isogonal with respect to $\angle QPR$ (Figure~\ref{fig:X3lemma}).
\end{lemma}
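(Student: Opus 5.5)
We show that $\angle QPO$ and the angle between side $PR$ and the altitude from $P$ are equal (as directed angles modulo $180^\circ$). Let $K$ be the foot of the perpendicular from $P$ to line $QR$. The claim is then $\angle(PQ,PO)=\angle(PK,PR)$.

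First compute $\angle QPO$. Triangle $OPQ$ is isosceles with $OP=OQ$, and its apex angle is $\angle POQ=2\angle PRQ$ by the inscribed angle theorem. Hence the base angle is $\angle QPO=90^\circ-\angle PRQ$.

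Next compute $\angle KPR$. Triangle $PKR$ has a right angle at $K$, so $\angle KPR=90^\circ-\angle PRK=90^\circ-\angle PRQ$.

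The two angles agree, and they are measured from the two different sides $PQ$ and $PR$ of $\angle QPR$. So the lines $PO$ and $PK$ are reflections of each other in the bisector of $\angle QPR$, which is the isogonality claimed.

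The only delicate step is the configuration dependence. If $\angle R$ is obtuse, $O$ lies on the other side of $PQ$; if $\angle Q$ or $\angle R$ is obtuse, $K$ falls outside segment $QR$. I would handle this in one of two ways:

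\begin{itemize}
\item Phrase both computations in directed angles modulo $180^\circ$, as the paper already allows. The inscribed angle theorem becomes $\angle(OP,OQ)=2\angle(RP,RQ)$ modulo $360^\circ$, and the isosceles base-angle step is valid in directed form.
\item Alternatively, use the classical substitute: let $PP'$ be the diameter of the circumcircle through $P$. Then $\angle PQP'=90^\circ$ and $\angle PP'Q=\angle PRQ$ (same arc). So right triangles $PQP'$ and $PKR$ have equal acute angles at $P'$ and $R$, giving $\angle QPP'=\angle KPR$ directly. This version handles all positions uniformly modulo $180^\circ$.
\end{itemize}

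I would present the diameter argument as the main proof.
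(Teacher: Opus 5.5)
Your argument is correct. The computation in your opening paragraphs is the paper's proof with $Q$ and $R$ interchanged: the paper combines the isosceles triangle $OPR$ (with $\angle POR=2\angle PQR$) with the right triangle $PTQ$, just as you combine $OPQ$ with $PKR$.

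The diameter argument you choose as your main proof is a genuinely different classical route. It replaces the central-angle theorem and the base angles of an isosceles triangle by Thales' theorem and the equality of inscribed angles on the chord $PQ$, that is, by the similarity of the right triangles $PQP'$ and $PKR$.

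The paper's version is a bit shorter and needs no auxiliary point. However, its steps as written (``the angle at $Q$ is $\angle PQR$'', ``$\angle POR=2\angle PQR$'') hold literally only when $\angle Q$ is acute. The obtuse case rests on the paper's blanket convention of directed angles modulo $180^\circ$. Making the central-angle step directed forces you to work with rotations between rays modulo $360^\circ$. In your first bullet, $\angle(OP,OQ)$ must be read as the rotation from $\vv{OP}$ to $\vv{OQ}$, since angles between lines are only defined modulo $180^\circ$.

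Your diameter version needs only $\angle(P'P,P'Q)=\angle(RP,RQ)$ and $PQ\perp QP'$. Both are clean statements about lines modulo $180^\circ$, so the argument covers every configuration at once. The only case it misses is $\angle R=90^\circ$, where $P'=Q$ and triangle $PQP'$ degenerates. There $PO=PQ$ and $PK=PR$, so the claim is immediate. The paper's proof has the same kind of degenerate case at $\angle Q=90^\circ$.
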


\begin{figure}[h!t]
\centering
\includegraphics[width=0.3\linewidth]{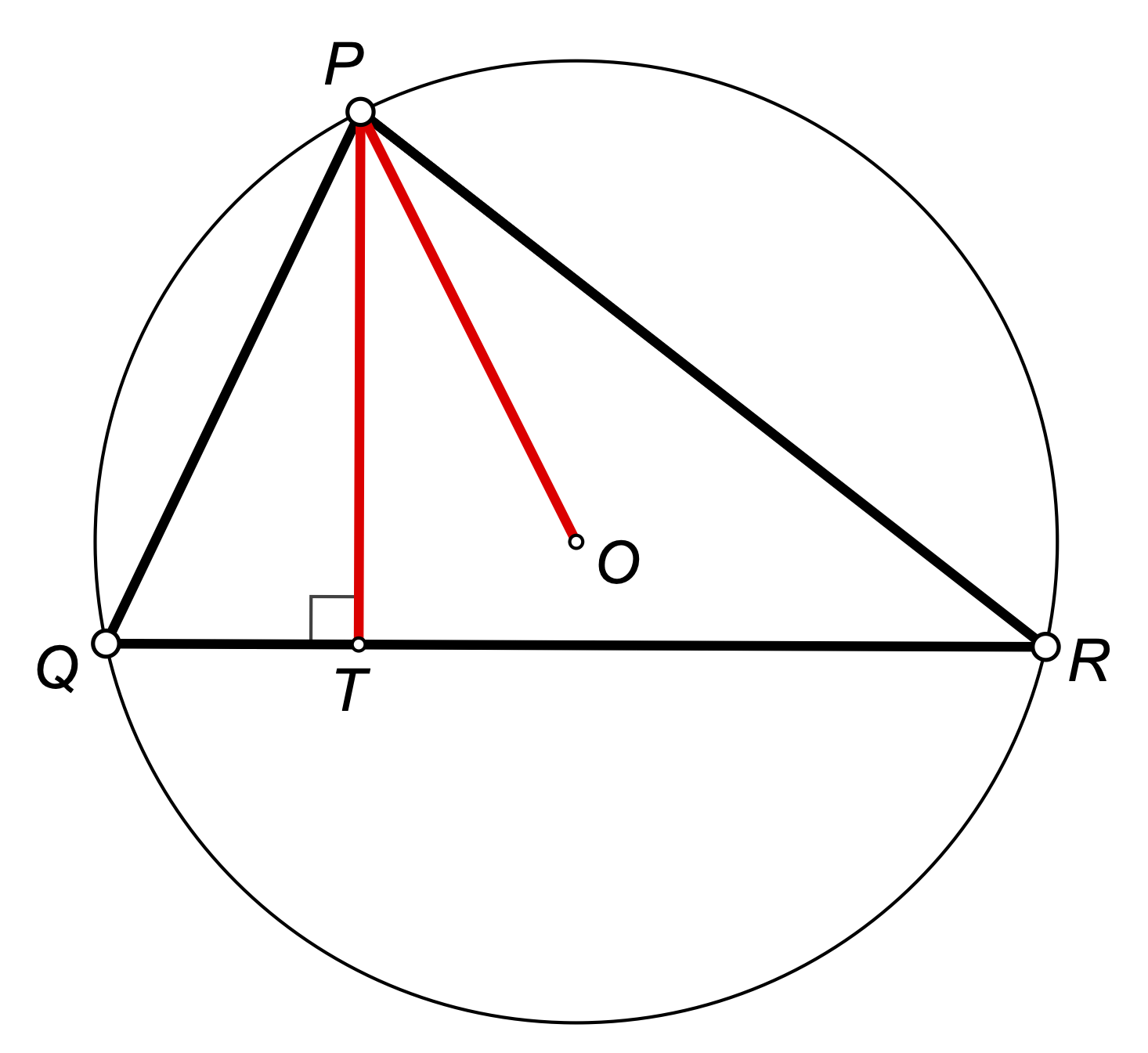}
\caption{Altitude and circumradius of a triangle}
\label{fig:X3lemma}
\end{figure}

\begin{proof}
Let the perpendicular from $P$ meet $QR$ at $T$. In the right triangle $PTQ$ the
angle at $Q$ is $\angle PQR$, so $\angle(PT,PQ)=90^\circ-\angle PQR$.
Since $OP=OR$, triangle $OPR$ is isosceles. Hence
\[
\angle(OP,PR)
=\frac12(180^\circ-\angle POR)
=90^\circ-\angle PQR,
\]
because $\angle POR=2\angle PQR$.
Thus, $\angle(OP,PR)=\angle(QP,PT)$, and $PO$ and $PT$ are reflections in the bisector of $\angle QPR$.
\end{proof}

Since $E$ lies on line $CA$ and $F$ on line $AB$, the sides of $\angle EAF$ are
the lines $CA,AB$, so its bisector is the bisector of $\angle BAC$; and
$EF\subset\LL$. Applying Lemma~\ref{lem:isog} to $T_A=AEF$, and
cyclically, yields the following result.

\begin{corollary}\label{cor:perp3}
The line $AG$ is the isogonal in $\triangle ABC$ of the perpendicular to $\LL$
through $A$; likewise $BH,CI$ are the isogonals of the perpendiculars to $\LL$ through $B,C$.
\end{corollary}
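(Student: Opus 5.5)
The corollary follows from Lemma~\ref{lem:isog} applied to each corner triangle, once we check that the isogonal relation inside the corner triangle is the same as the isogonal relation in $\triangle ABC$ at the shared vertex. I will treat $T_A=AEF$ in detail; the other two cases follow by the cyclic relabeling $A\to B\to C$, $E\to F\to D$.

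First, apply Lemma~\ref{lem:isog} with $P=A$, $Q=E$, $R=F$. Since $G$ is the circumcenter of $T_A$, the line $AG$ is the reflection, in the bisector of $\angle EAF$, of the perpendicular from $A$ to $EF$. The line $EF$ is $\LL$ itself, because $E$ and $F$ are distinct points of $\LL$ ($\LL$ is admissible). So this perpendicular is exactly the perpendicular to $\LL$ through $A$.

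Second, identify the bisector. Isogonality at $A$ depends only on the pair of lines through $A$ forming the angle, not on which points are chosen on them. Since $E\in CA$ and $F\in AB$, with $E,F\ne A$ because $\LL$ avoids the vertices, the lines $AE$ and $AF$ are the lines $AC$ and $AB$. Hence the bisector of $\angle EAF$ is a bisector of $\angle BAC$.

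The main subtlety is orientation: $E$ or $F$ may lie on the extension of a side beyond $A$, so $\angle EAF$ could be the supplementary angle and its bisector the external bisector of $\angle BAC$. I handle this by working with directed angles modulo $180^\circ$, as the paper allows. Reflecting in the internal bisector and reflecting in the external bisector produce the same line through $A$, since the two bisectors are perpendicular and reflections in them differ by a half-turn about $A$, which fixes every line through $A$. So $AG$ is the isogonal in $\triangle ABC$ of the perpendicular to $\LL$ at $A$ in every configuration.

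Applying the same argument to $T_B=BFD$ with circumcenter $H$, and to $T_C=CDE$ with circumcenter $I$, shows that $BH$ and $CI$ are the isogonals of the perpendiculars to $\LL$ through $B$ and $C$. This completes the proof.
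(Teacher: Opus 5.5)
Your proof is correct and follows the paper's argument. Like the paper, you apply Lemma~\ref{lem:isog} to the corner triangle $AEF$, note that $EF\subset\LL$ and that the sides of $\angle EAF$ are the lines $AC$ and $AB$, and then argue cyclically. Your added remark, that reflecting in the internal and external bisectors gives the same line through $A$ because the two reflections differ by a half-turn about $A$, makes explicit a point the paper covers only through its convention on directed angles.
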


\begin{theorem}
\label{thm:circ}
Center $X_3$ is concurrent. The lines $AG,BH,CI$ concur at a point on the circumcircle of $\triangle ABC$.
\end{theorem}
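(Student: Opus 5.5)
By Corollary~\ref{cor:perp3}, the three perpendiculars to $\LL$ through $A$, $B$, $C$ are mutually parallel. They therefore all pass through a single point at infinity $P_\infty$, namely the infinite point in the direction perpendicular to $\LL$. So $AG$, $BH$, $CI$ are exactly the cevians through $A,B,C$ of the isogonal conjugate of $P_\infty$. The plan is to show that these three isogonal lines meet at one point $Q$, the isogonal conjugate of $P_\infty$, and then to show that $Q$ lies on the circumcircle.

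To keep the argument elementary I would avoid quoting the general theorem on isogonal conjugates. Instead I would define $Q$ as the second intersection of $AG$ with the circumcircle $\Gamma$ of $\triangle ABC$, and then prove that $BQ$ is the isogonal of the line through $B$ perpendicular to $\LL$; by symmetry the same holds at $C$.

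The key step is a directed-angle chase modulo $180^\circ$. Let $d$ be the direction perpendicular to $\LL$. Isogonality of $AQ$ and $d$ at $A$ means
\[
\angle(AB,AQ)=\angle(d,AC).
\]
By the inscribed angle theorem on $\Gamma$,
\[
\angle(BC,BQ)=\angle(AC,AQ).
\]
We need $\angle(BC,BQ)=\angle(d,BA)$, which is isogonality at $B$. Next, from the relation at $A$,
\[
\angle(AC,AQ)=\angle(AC,AB)+\angle(AB,AQ)=\angle(AC,AB)+\angle(d,AC)=\angle(d,AB).
\]
Since $BA$ and $AB$ are the same line, this gives exactly $\angle(BC,BQ)=\angle(d,BA)$. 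Hence $BQ$ is the isogonal at $B$ of the line through $B$ in direction $d$, and this line is $BH$ by Corollary~\ref{cor:perp3}. The same computation at $C$ shows $CQ=CI$. Therefore the three lines concur at $Q\in\Gamma$.

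The main obstacle is bookkeeping with directed angles, together with the degenerate cases. Directed angles mod $180^\circ$ handle every configuration of $\LL$ uniformly, as the paper stipulates. A degenerate case arises if $AG$ is tangent to $\Gamma$. In that case the second intersection is $A$ itself, and the inscribed-angle step uses the tangent–chord angle instead; the same identity results. As a fallback, one can confirm the result through Theorem~\ref{thm:crit}. For $f=a^2(b^2+c^2-a^2)$, the quantity $S+T$ reduces, via the law of sines in the corner triangles (which share angles with $ABC$ up to supplements), to an identity in the angle between $\LL$ and the sides. The synthetic route above should suffice, however.
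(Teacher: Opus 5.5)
Your argument is correct. It starts where the paper does: Corollary~\ref{cor:perp3}, plus the observation that the perpendiculars to $\LL$ through $A,B,C$ are cevians of the single infinite point $P_\infty$. The difference lies in the final step.

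The paper cites two general facts about isogonal conjugation. First, the isogonals of the cevians of a point concur at its isogonal conjugate. Second, conjugation carries the line at infinity onto the circumcircle. You instead prove exactly the special case needed. You define $Q$ as the second intersection of $AG$ with $\Gamma$, and use the directed inscribed-angle theorem to show that $BQ$ and $CQ$ satisfy the isogonality conditions at $B$ and $C$.

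Your route is self-contained, and a single angle chase gives both the concurrence and the location of the perspector on the circumcircle. It also handles explicitly the configurations with $\LL$ perpendicular to a side, where $P_\infty$ is the infinite point of a sideline and the perspector is a vertex; your tangent case $Q=A$ is exactly $\LL\perp BC$. The paper's citation does not address these configurations. The paper's version is shorter because it leans on standard theory.

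One small bookkeeping point: when $\LL\perp CA$ or $\LL\perp AB$, you get $Q=B$ or $Q=C$, so the line $BQ$ or $CQ$ is undefined. In that case $Q$ is the vertex itself and lies trivially on its own cevian, so nothing more is needed. The fallback via Theorem~\ref{thm:crit} is unnecessary.
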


\begin{figure}[h!t]
\centering
\includegraphics[width=0.55\linewidth]{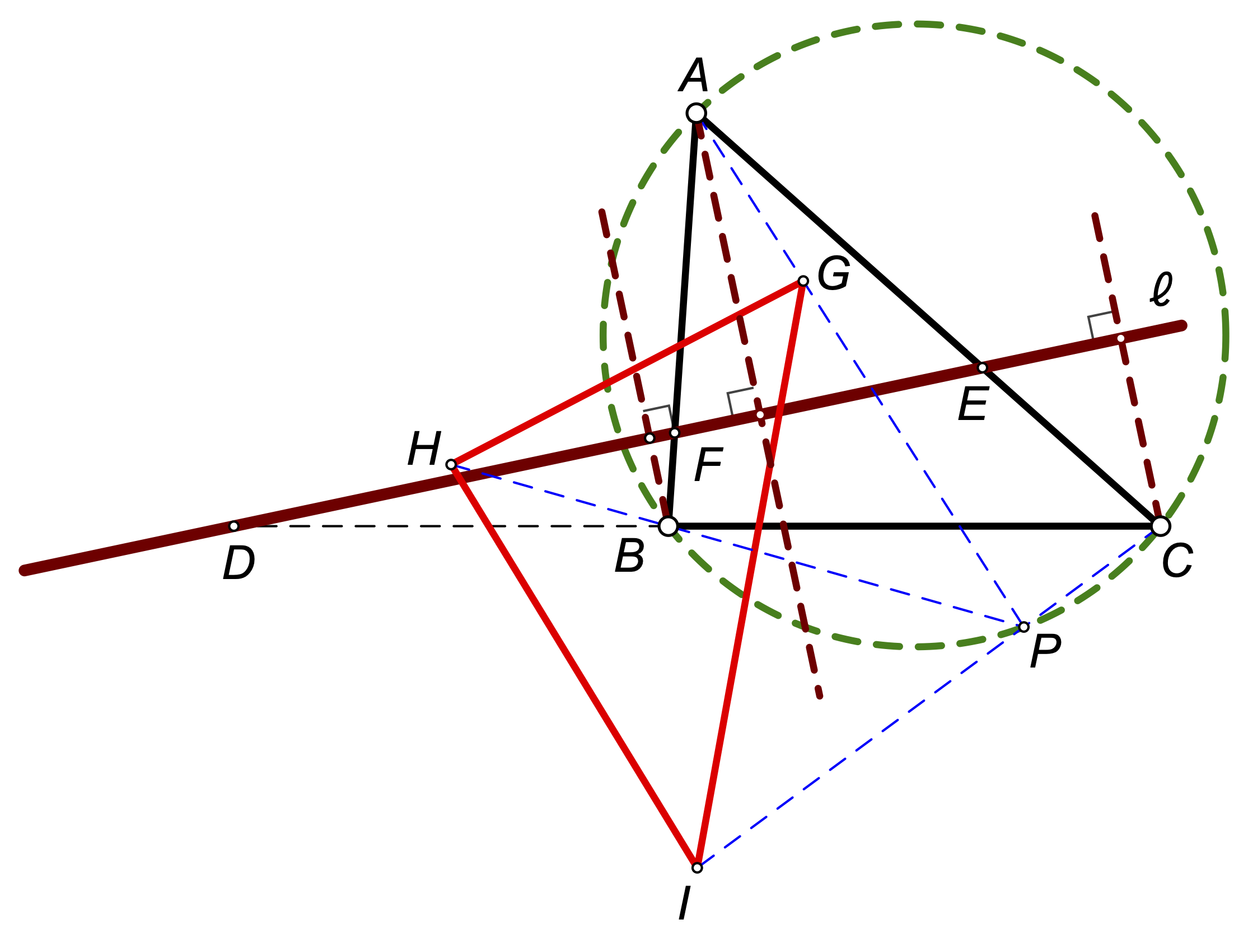}
\caption{$G$, $H$, and $I$ are circumcenters}
\label{fig:X3}
\end{figure}

\begin{proof}
The three perpendiculars to $\LL$ through $A,B,C$ are parallel (Figure~\ref{fig:X3}).
Hence, the
cevians $AP_\infty,BP_\infty,CP_\infty$ of the single point at infinity
$P_\infty$ in the direction orthogonal to $\LL$, are also parallel. By Corollary~\ref{cor:perp3},
reflecting these in the bisectors at $A,B,C$ produces $AG,BH,CI$.
These reflected cevians therefore concur at the isogonal conjugate $P_\infty^*$.
Isogonal conjugation carries the line at infinity to the circumcircle
\cite[\S6]{Yiu}, so $P_\infty^{\ast}$ lies on the circumcircle of $ABC$, and
$AG,BH,CI$ concur there.
\end{proof}

\subsection{The orthocenter $X_4$}\ \\

Assume here that $\triangle ABC$ has no right angle, so that the orthocenter of each corner
triangle is distinct from its apex and its cevians are well defined.

\begin{theorem}
\label{thm:orth}
Center $X_4$ is concurrent. The lines $AG,BH,CI$ are mutually parallel.
Consequently, $\triangle ABC$ and
$\triangle GHI$ are perspective from the point at infinity orthogonal to $\LL$.
\end{theorem}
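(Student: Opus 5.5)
The argument is a direct angle chase that mirrors the treatment of the circumcenter. Fix the corner triangle $T_A=AEF$ with orthocenter $G$. The line $AG$ is the altitude of $T_A$ from $A$, and that altitude is by definition perpendicular to the opposite side $EF$. Since $E$ and $F$ both lie on $\LL$, we have $EF=\LL$. So $AG$ is simply the line through $A$ perpendicular to $\LL$. In the same way, $BH$ is the altitude of $T_B=BFD$ from $B$, perpendicular to $FD=\LL$, and $CI$ is the altitude of $T_C=CDE$ from $C$, perpendicular to $DE=\LL$.

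Hence all three lines $AG$, $BH$, $CI$ are perpendicular to $\LL$, so they are mutually parallel. In the projective plane, three parallel lines pass through a common point at infinity, namely $P_\infty$, the point at infinity in the direction orthogonal to $\LL$. This gives the perspectivity of $\triangle ABC$ and $\triangle GHI$ from $P_\infty$.

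The only real obstacle is the well-definedness of each cevian. The line $AG$ is determined only when $G\neq A$, and the orthocenter of $AEF$ coincides with $A$ exactly when $\angle EAF$ is right. Since the lines $AE$ and $AF$ are the sidelines $CA$ and $AB$, this happens exactly when $\angle BAC=90^\circ$. The standing assumption that $\triangle ABC$ has no right angle excludes this at all three vertices, so each altitude is a genuine line through two distinct points, $A$ and $G$ (and similarly at $B$ and $C$). Admissibility of $\LL$ guarantees that each corner triangle is nondegenerate.

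As a cross-check with Theorem~\ref{thm:crit}, one could instead compute $s_A,t_A$ from the center function of $X_4$ in the triangle $AEF$ and verify $S+T=0$. The synthetic argument avoids this computation entirely.
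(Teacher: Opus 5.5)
Your proof is correct and is essentially the paper's own argument: $AG$, $BH$, $CI$ are the altitudes of the corner triangles from $A$, $B$, $C$ onto the common sideline $\LL$, so all three are perpendicular to $\LL$ and meet at the point at infinity orthogonal to $\LL$. Your explicit check that $G\neq A$ exactly when $\angle BAC\neq 90^\circ$ fills in the reason behind the paper's standing assumption that $\triangle ABC$ has no right angle.
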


\begin{figure}[h!t]
\centering
\includegraphics[width=0.5\linewidth]{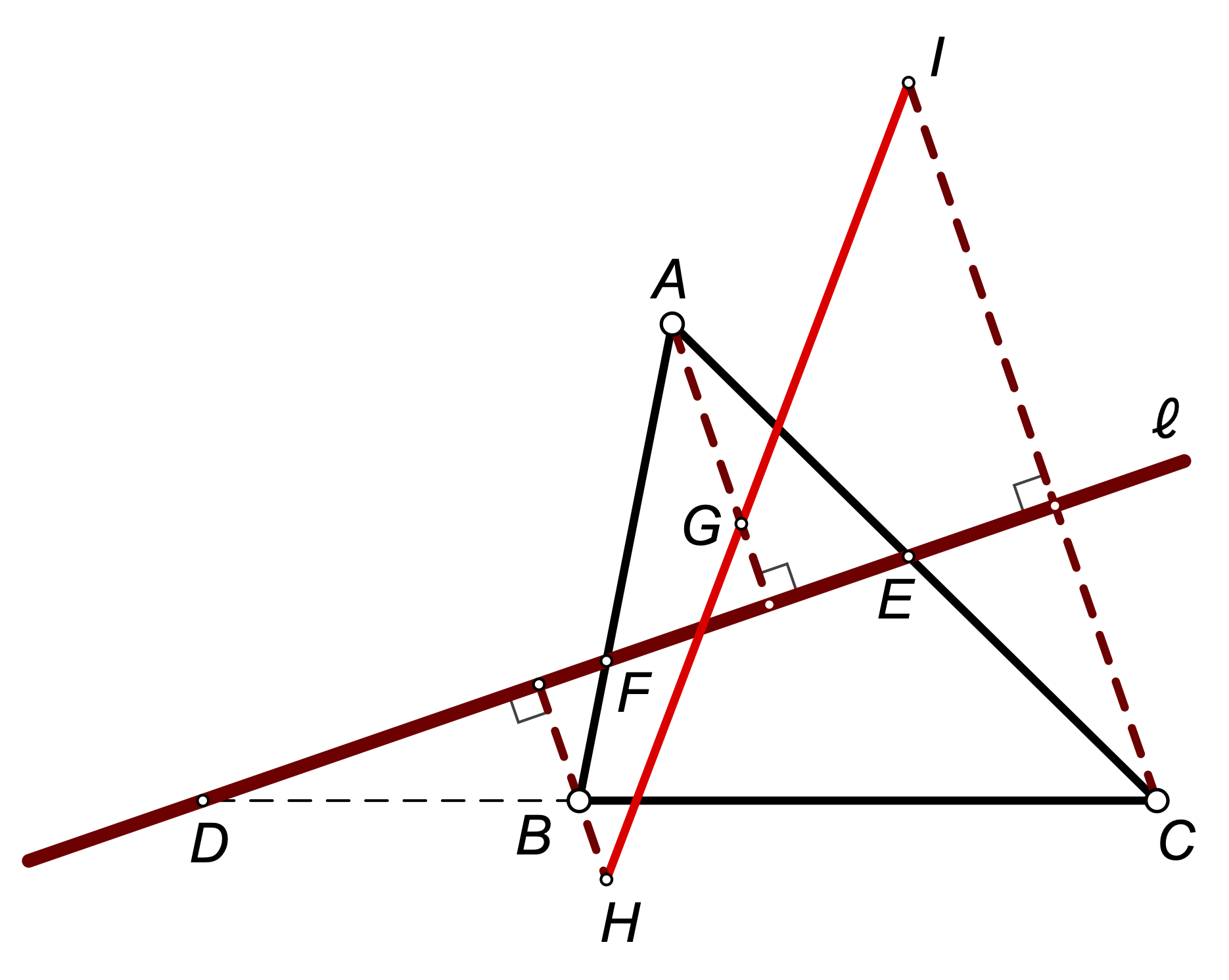}
\caption{$G$, $H$, and $I$ are orthocenters}
\label{fig:X4}
\end{figure}

\begin{proof}
Since $G$ is the orthocenter of $\triangle AEF$, line $AG$ is an altitude and therefore $AG\perp \LL$.
Similarly, $BH\perp\LL$ and $CI\perp\LL$.
Each of $AG,BH,CI$ is perpendicular to the fixed line $\LL$, so the three lines are
parallel and meet at the infinite point $P_\infty$ of the direction orthogonal to $\LL$.
Thus, $\triangle ABC$ and $\triangle GHI$ are perspective from $P_\infty$ (Figure~\ref{fig:X4}).
\end{proof}

\begin{remark}
Theorems~4.3 and~4.4 are closely related.
For each corner triangle, the cevian to the circumcenter is the isogonal of the
cevian to the orthocenter.
Thus, the two perspectors are themselves isogonal conjugates: one lies on the
circumcircle, while the other is the corresponding point at infinity.
\end{remark}


\subsection{The Clawson point $X_{19}$}\ \\

From \cite{ETC}, we find that the Clawson point has center function
\begin{equation}
\label{eq:claw}
   f(x,y,z)=\frac{x}{\,y^{2}+z^{2}-x^{2}\,}.
\end{equation}
Since $D,E,F$ are collinear, Menelaus' theorem \cite[\S3.10]{Altshiller-Court} applies to $\triangle ABC$.

\begin{lemma}[Product Identity]\label{lem:men1}
$\;|AF|\,|BD|\,|CE|=|AE|\,|BF|\,|CD|.$
\end{lemma}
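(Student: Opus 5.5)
This is Menelaus' theorem for the transversal $\LL$ cutting the sidelines of $\triangle ABC$ at the collinear points $D$, $E$, $F$, stated with unsigned lengths. I would give a direct proof in the barycentric setup of Section~\ref{section:prelim} rather than simply cite \cite{Altshiller-Court}, since the coordinates of $D$, $E$, $F$ are already available in \eqref{eq:DEF}.

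The plan is to read off each unsigned ratio from the barycentric coordinates of the pierce points. A point on line $BC$ with coordinates $(0:y:z)$, $y+z\neq 0$, divides the segment so that $|BD|/|DC|=|z|/|y|$, because the normalized coordinates are the weights of $B$ and $C$. Apply this to $D=(0:w:-v)$, $E=(-w:0:u)$ and $F=(v:-u:0)$. This gives
\[
\frac{|BD|}{|CD|}=\frac{|v|}{|w|},\qquad
\frac{|CE|}{|AE|}=\frac{|w|}{|u|},\qquad
\frac{|AF|}{|BF|}=\frac{|u|}{|v|}.
\]
Each denominator is nonzero: admissibility makes $u,v,w$ nonzero, and no pierce point coincides with a vertex. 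Multiplying the three ratios, $|u||v||w|$ cancels to give $1$, and clearing denominators yields $|AF|\,|BD|\,|CE|=|AE|\,|BF|\,|CD|$.

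The only step needing care is the ratio formula for a point on a sideline. Write $\widehat D=\frac{y}{y+z}B+\frac{z}{y+z}C$. Then $D-B=\frac{z}{y+z}(C-B)$ and $C-D=\frac{y}{y+z}(C-B)$, so $|BD|/|DC|=|z|/|y|$. This holds whether $D$ lies inside or outside the segment, because we take absolute values. The denominators $y+z$, namely $w-v$, $u-w$ and $v-u$, are nonzero precisely because $\LL$ is not parallel to any side. This is exactly where admissibility is used.

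As an alternative purely geometric route, drop perpendiculars from $A$, $B$, $C$ to $\LL$ with lengths $h_A$, $h_B$, $h_C$. Similar right triangles give $|BD|/|CD|=h_B/h_C$, $|CE|/|AE|=h_C/h_A$ and $|AF|/|BF|=h_A/h_B$, and the product telescopes to $1$. Either argument suffices; I would present the perpendicular-distance version for its elementary flavor, consistent with the aims of this section.
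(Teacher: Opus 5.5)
Your proof is correct and rests on the same fact as the paper's proof, which is a one-line appeal to the unsigned Menelaus relation $\frac{|AF|}{|FB|}\cdot\frac{|BD|}{|DC|}\cdot\frac{|CE|}{|EA|}=1$ followed by rearranging. The only difference is that you prove this relation instead of citing it, either from the coordinates \eqref{eq:DEF} or from the distances of $A$, $B$, $C$ to $\LL$; this makes the lemma self-contained and shows where admissibility ($u,v,w\neq0$ and pairwise distinct) is used.
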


\begin{proof}
The unsigned Menelaus relation for the transversal $\LL$ is
$\frac{|AF|}{|FB|}\cdot\frac{|BD|}{|DC|}\cdot\frac{|CE|}{|EA|}=1$, i.e.
$|AF|\,|BD|\,|CE|=|FB|\,|DC|\,|EA|=|BF|\,|CD|\,|AE|$.
\end{proof}

\begin{lemma}[Signed Ratios]\label{lem:men2}
$\;\dfrac{\seg{EA}}{\seg{EC}}\cdot\dfrac{\seg{FB}}{\seg{FA}}\cdot
\dfrac{\seg{DC}}{\seg{DB}}=1.$
\end{lemma}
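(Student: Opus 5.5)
The plan is to derive Lemma~\ref{lem:men2} from the signed Menelaus theorem for $\triangle ABC$ with transversal $\LL$. That theorem states that for collinear points $D\in BC$, $E\in CA$, $F\in AB$,
\[
\frac{\seg{AF}}{\seg{FB}}\cdot\frac{\seg{BD}}{\seg{DC}}\cdot\frac{\seg{CE}}{\seg{EA}}=-1 .
\]
The product in Lemma~\ref{lem:men2} is built from the same three pairs of segments, only written with ratios of segments sharing an endpoint at a pierce point. So the whole proof will be a sign bookkeeping.

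First I rewrite each factor of the lemma in Menelaus form using $\seg{XY}=-\seg{YX}$ along a fixed line:
\[
\frac{\seg{EA}}{\seg{EC}}=\frac{\seg{EA}}{-\seg{CE}},\qquad
\frac{\seg{FB}}{\seg{FA}}=\frac{\seg{FB}}{-\seg{AF}},\qquad
\frac{\seg{DC}}{\seg{DB}}=\frac{\seg{DC}}{-\seg{BD}}.
\]
Multiplying the three gives
\[
(-1)^3\cdot\frac{\seg{EA}\,\seg{FB}\,\seg{DC}}{\seg{CE}\,\seg{AF}\,\seg{BD}} .
\]
This is $-1$ times the reciprocal of the signed Menelaus product. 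Since that product is $-1$, its reciprocal is $-1$, and the total is $(-1)(-1)=1$, as claimed.

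Alternatively, to stay self-contained in the paper's barycentric setting, I can verify the identity directly from \eqref{eq:DEF}. Normalizing gives $\widehat E=(-w,0,u)/(u-w)$. Hence $E$ divides $CA$ with $\seg{CE}:\seg{EA}=(-w):u$, because the weight on $A$ governs the distance from $C$. This yields
\[
\frac{\seg{EA}}{\seg{EC}}=-\frac{w}{u}.
\]
In the same way, $F=(v:-u:0)$ gives $\seg{AF}:\seg{FB}=-u:v$, so
\[
\frac{\seg{FB}}{\seg{FA}}=-\frac{v}{u}\cdot(\text{sign check}),
\]
and similarly for $D$. The product of the three factors then telescopes to $1$.

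The only real obstacle is the sign and orientation conventions: making sure the directed segments are measured consistently along each sideline, and that each ratio really is a ratio of collinear directed segments. Both factors in each ratio lie on the same sideline, so this holds. I would rely primarily on the Menelaus route, since it avoids the barycentric sign checks, and use the barycentric computation only as a confirmation.
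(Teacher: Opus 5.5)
Your Menelaus route is correct and is essentially the paper's own proof: substitute $\seg{AF}=-\seg{FA}$, $\seg{BD}=-\seg{DB}$, $\seg{CE}=-\seg{EC}$ into the signed Menelaus relation and invert. The barycentric ``confirmation'' is wrong as written, though: from $\seg{CE}:\seg{EA}=-w:u$ one gets $\seg{EA}/\seg{EC}=u/w$, not $-w/u$, and similarly $\seg{FB}/\seg{FA}=v/u$ and $\seg{DC}/\seg{DB}=w/v$, and it is these corrected values that telescope to $1$.
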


\begin{proof}
The signed Menelaus relation is
$\frac{\seg{AF}}{\seg{FB}}\cdot\frac{\seg{BD}}{\seg{DC}}\cdot
\frac{\seg{CE}}{\seg{EA}}=-1$.
Replacing $AF=-FA$, $BD=-DB$, $CE=-EC$ and inverting  yields the asserted relation.
\end{proof}

\goodbreak
\begin{theorem}\label{thm:claw}
Center $X_{19}$ is concurrent.
\end{theorem}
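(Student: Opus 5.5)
We will verify the criterion of Theorem~\ref{thm:crit}, $S+T=0$, for the center function \eqref{eq:claw}. By the law of cosines, $y^2+z^2-x^2=2yz\cos\theta$, where $\theta$ is the angle opposite the side $x$. So $f(x,y,z)=x/(2yz\cos\theta)$. In $T_A=(A,E,F)$ the side $|FA|$ is opposite $E$ and the side $|AE|$ is opposite $F$. Hence
\[
s_A=\frac{|FA|}{2|AE|\,|EF|\cos\angle AEF},\qquad
t_A=\frac{|AE|}{2|EF|\,|FA|\cos\angle AFE},\qquad
\frac{s_A}{t_A}=\frac{|FA|^2\cos\angle AFE}{|AE|^2\cos\angle AEF},
\]
and cyclically for $T_B$ and $T_C$. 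Since none of these quantities vanishes for a nondegenerate non-right configuration, $S+T=0$ is equivalent to $\dfrac{s_A}{t_A}\cdot\dfrac{s_B}{t_B}\cdot\dfrac{s_C}{t_C}=-1$.

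We split this product into a length part and a cosine part. The length part is
\[
\frac{|FA|^2}{|AE|^2}\cdot\frac{|DB|^2}{|BF|^2}\cdot\frac{|EC|^2}{|CD|^2}.
\]
By the Product Identity (Lemma~\ref{lem:men1}) it equals $1$. The cosine part is
\[
\frac{\cos\angle AFE}{\cos\angle BFD}\cdot\frac{\cos\angle BDF}{\cos\angle CDE}\cdot\frac{\cos\angle CED}{\cos\angle AEF}.
\]
Here the angles at $F$ in $T_A$ and $T_B$ are both angles between line $AB$ and $\LL$, and similarly at $D$ and at $E$. Hence each of the three quotients is $\pm1$, and the whole product has absolute value $1$. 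This already gives $|S|=|T|$.

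The main obstacle is the sign: we must show that exactly an odd number of the three cosine quotients equal $-1$. Consider the vertex $F$. The interior angle of $T_A$ at $F$ is between rays $FA$ and $FE$, and that of $T_B$ is between rays $FB$ and $FD$. Rays $FE$ and $FD$ both lie along $\LL$, and they point the same way or opposite ways according as $F$ is or is not outside segment $ED$. Similarly, rays $FA$ and $FB$ are opposite exactly when $F$ lies between $A$ and $B$. So $\cos\angle AFE/\cos\angle BFD=-\operatorname{sgn}(\seg{FA}/\seg{FB})\cdot\operatorname{sgn}(\seg{FE}/\seg{FD})$.

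Doing the same at $D$ and $E$ and multiplying, the sign contributions from the points on $\LL$ telescope to $+1$. The ratios $\seg{FE}/\seg{FD}$, $\seg{DF}/\seg{DE}$, $\seg{ED}/\seg{EF}$ multiply to $-1$ for three points on a line, but with the three overall minus signs we must track carefully. What remains is the sign of $\frac{\seg{EA}}{\seg{EC}}\cdot\frac{\seg{FB}}{\seg{FA}}\cdot\frac{\seg{DC}}{\seg{DB}}$, which is $+1$ by Lemma~\ref{lem:men2}.

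We expect the bookkeeping to yield a total sign of $-1$, which proves $S+T=0$. If the ray bookkeeping becomes messy, the fallback is a direct check. Take the two configurations, $\LL$ cutting two sides internally or cutting none internally, with the order of $D,E,F$ on $\LL$. Then verify the sign in each case, using directed angles modulo $180^\circ$ as the paper allows.
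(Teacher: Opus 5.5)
Your strategy is the paper's: reduce to $S+T=0$ via Theorem~\ref{thm:crit}, dispose of the lengths with Lemma~\ref{lem:men1}, and settle the cosine factors pierce point by pierce point using Lemma~\ref{lem:men2} and the collinearity of $D,E,F$. But the sign, which you yourself single out as the main obstacle, is handled incorrectly, and your conclusion is asserted rather than derived.

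Your per-point identity has a spurious minus sign. Since $\vv{FA}=\frac{\seg{FA}}{\seg{FB}}\vv{FB}$ and $\vv{FE}=\frac{\seg{FE}}{\seg{FD}}\vv{FD}$, we get $\vv{FA}\cdot\vv{FE}=\frac{\seg{FA}}{\seg{FB}}\cdot\frac{\seg{FE}}{\seg{FD}}\,\vv{FB}\cdot\vv{FD}$, and hence
\[
\frac{\cos\angle AFE}{\cos\angle BFD}=\operatorname{sgn}\frac{\seg{FA}}{\seg{FB}}\cdot\operatorname{sgn}\frac{\seg{FE}}{\seg{FD}}
\]
with no leading minus. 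As a check, if $F$ lies outside both segments $AB$ and $ED$, then $\angle AFE$ and $\angle BFD$ are literally the same angle, so the quotient is $+1$; your formula gives $-1$.

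Follow your own bookkeeping with the extra minus. The three stray minus signs times the $-1$ from the collinear ratios give $+1$ (your ``telescope to $+1$''), and Menelaus gives $+1$. So your computation yields $S/T=+1$, i.e.\ $S+T=2T\neq0$, which is non-concurrence. Writing that you ``expect'' $-1$, and pointing to a case check you do not carry out, does not close this gap.

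The repair is immediate: drop the minus sign. The $\LL$-ratios then give $\frac{\seg{FE}}{\seg{FD}}\cdot\frac{\seg{DF}}{\seg{DE}}\cdot\frac{\seg{ED}}{\seg{EF}}=-1$, as you correctly observed. The side ratios $\frac{\seg{FA}}{\seg{FB}}\cdot\frac{\seg{DB}}{\seg{DC}}\cdot\frac{\seg{EC}}{\seg{EA}}$ are the reciprocal of the product in Lemma~\ref{lem:men2}, hence $+1$. So the cosine part is $-1$ and $S=-T$.

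This is exactly the paper's proof. The paper avoids your magnitude/sign split by writing $d_A=2\,\vv{EA}\cdot\vv{EF}$, and so on. Each vertex quotient, such as $\frac{\vv{EA}\cdot\vv{EF}}{\vv{EC}\cdot\vv{ED}}=\frac{\seg{EA}}{\seg{EC}}\cdot\frac{\seg{EF}}{\seg{ED}}$, is then an exact signed identity, with no need for ray-direction or configuration cases.
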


\begin{proof}
Unlike the proofs for $X_3$ and $X_4$, we have not found a purely synthetic
argument for $X_{19}$.
Instead, we apply the concurrence criterion of Theorem~\ref{thm:crit}.

Substituting the center function \eqref{eq:claw} into the concurrence criterion
(Theorem~\ref{thm:crit}) gives
\[
  s_A=\frac{|FA|}{d_A},\ \ t_A=\frac{|AE|}{d_A'},\quad
  s_B=\frac{|DB|}{d_B},\ \ t_B=\frac{|BF|}{d_B'},\quad
  s_C=\frac{|EC|}{d_C},\ \ t_C=\frac{|CD|}{d_C'} .
\]
Hence
\[
  S+T=\frac{|FA|\,|DB|\,|EC|}{d_Ad_Bd_C}
       +\frac{|AE|\,|BF|\,|CD|}{d_A'd_B'd_C'} .
\]
By Lemma~\ref{lem:men1} the two numerators are equal, so proving
$S+T=0$ is equivalent to showing
\begin{equation}\label{eq:DD}
  d_Ad_Bd_C+d_A'd_B'd_C'=0 .
\end{equation}

To prove this identity we express each denominator as a dot product.

By the Law of Cosines, each $d$ is a dot product and we have
\[
\begin{aligned}
 d_A&=2\,\vv{EA}\cdot\vv{EF}, & d_A'&=2\,\vv{FA}\cdot\vv{FE},\\
 d_B&=2\,\vv{FB}\cdot\vv{FD}, & d_B'&=2\,\vv{DB}\cdot\vv{DF},\\
 d_C&=2\,\vv{DC}\cdot\vv{DE}, & d_C'&=2\,\vv{EC}\cdot\vv{ED}.
\end{aligned}
\]

Form the ratio
\[
\frac{d_Ad_Bd_C}
     {d_A'd_B'd_C'}.
\]

Grouping together the factors associated with the same vertex
($E$, $F$, and $D$),
the common cosine factors cancel, leaving
\[
  \frac{d_Ad_Bd_C}{d_A'd_B'd_C'}
  =\underbrace{\frac{\seg{EA}}{\seg{EC}}\cdot\frac{\seg{FB}}{\seg{FA}}\cdot
    \frac{\seg{DC}}{\seg{DB}}}_{\text{side factors}}\;\cdot\;
   \underbrace{\frac{\seg{EF}}{\seg{ED}}\cdot\frac{\seg{FD}}{\seg{FE}}\cdot
    \frac{\seg{DE}}{\seg{DF}}}_{\LL\text{ factors}} .
\]
Each $\LL$ factor equals $-1$ since $\seg{ED}=-\seg{DE}$, and $\seg{FE}=-\seg{EF}$,
$\seg{DF}=-\seg{FD}$. Thus, the product of the three $\LL$ factors is $-1$.
The side factors equal $+1$ by Lemma~\ref{lem:men2}. Therefore the ratio is $-1$,
which is \eqref{eq:DD}.
By Theorem~\ref{thm:crit}, $AG,BH,CI$ concur.
\end{proof}

\begin{remark}
Section~\ref{sec:parity} will show that this phenomenon is not peculiar to $X_{19}$.

\end{remark}

\section{A parity criterion for concurrence}
\label{sec:parity}

The proof of Theorem~\ref{thm:claw} suggests a much larger family of concurrent centers.
The key observation is that the argument depends only on whether the exponent of
the Conway symbol is odd or even.
This leads to the following parity criterion.

Set $S_A=b^{2}+c^{2}-a^{2}$ (twice the Conway symbol),
and $S_B,S_C$ cyclically.
For integers $k$ and $m$, define the center function
\[
   f_{k,m}(x,y,z)=x^{k}\,(y^{2}+z^{2}-x^{2})^{m},
\]
homogeneous in $x,y,z$ and symmetric in $y,z$.
Its center has barycentric coordinates
$$\left(a^{k}S_A^{m}:b^{k}S_B^{m}:c^{k}S_C^{m}\right).$$

\begin{theorem}
\label{thm:parity}
For the center $X$ with center function $f_{k,m}$, the lines $AG,BH,CI$ are concurrent if
and only if $m$ is odd. In particular, the conclusion is independent of $k$.
\end{theorem}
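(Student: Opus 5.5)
We apply Theorem~\ref{thm:crit} with $f=f_{k,m}$ and reuse the computation from the proof of Theorem~\ref{thm:claw}. In the notation of that proof, write $d_A=|EF|^2+|AE|^2-|FA|^2$ for the Conway-type expression attached to the first argument of $s_A$. Define $d_A'$, $d_B,d_B'$, $d_C,d_C'$ analogously. Then
\[
s_A=|FA|^k d_A^{m},\quad t_A=|AE|^k {d_A'}^{m},
\]
and cyclically. Hence
\[
S=\bigl(|FA|\,|DB|\,|EC|\bigr)^k (d_Ad_Bd_C)^m,\qquad
T=\bigl(|AE|\,|BF|\,|CD|\bigr)^k (d_A'd_B'd_C')^m .
\]
By the Product Identity (Lemma~\ref{lem:men1}) the two length products coincide, and they are nonzero because $\LL$ is admissible. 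So the factor $|\cdot|^k$ cancels for every integer $k$, which already explains the independence from $k$. The condition $S+T=0$ then becomes
\[
(d_Ad_Bd_C)^m+(d_A'd_B'd_C')^m=0 .
\]

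Next we invoke the key identity established inside the proof of Theorem~\ref{thm:claw}: writing each $d$ as twice a dot product and pairing the factors at $E$, $F$, $D$, the ratio $d_Ad_Bd_C/(d_A'd_B'd_C')$ equals the product of the side factors and the $\LL$ factors. By Lemma~\ref{lem:men2} and the sign reversals along $\LL$, this ratio is $-1$. That argument used nothing specific to $X_{19}$, so it applies verbatim: $P:=d_Ad_Bd_C=-Q$ with $Q:=d_A'd_B'd_C'$. Therefore $S+T=Q^m\bigl((-1)^m+1\bigr)$ up to the common nonzero length factor. This vanishes when $m$ is odd and equals $2Q^m$ when $m$ is even.

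To conclude the \emph{only if} direction for even $m$ we need $Q\neq 0$, or $Q^m$ defined for negative $m$. Here the main care is needed. Each $d$ is twice the dot product of two side vectors of a corner triangle, so $d=0$ exactly when that corner triangle has a right angle at the relevant vertex. For $m<0$ the center itself is then undefined, so this case is excluded by hypothesis. For $m=0$ we get $Q^0=1$, so $S+T=2\neq0$, consistent with the claim (the center is $X$ with coordinates $(a^k:b^k:c^k)$).

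For $m>0$ even, we argue that for a generic admissible $\LL$ no corner triangle has a right angle at $E$, $F$ or $D$. Such a right angle forces $\LL$ to be perpendicular to a sideline, a single direction. Hence $Q\neq0$ and $S+T=2Q^m\neq 0$, so concurrence fails for such $\LL$. Since "concurrent" means concurrent for all admissible $\LL$, this proves the equivalence. The only step needing scrutiny is this degenerate-case bookkeeping; the algebra itself is immediate from Theorem~\ref{thm:claw}'s ratio identity.
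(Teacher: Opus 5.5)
Your proposal is correct and follows essentially the same route as the paper. You cancel the length factor with Lemma~\ref{lem:men1} and invoke the identity $d_Ad_Bd_C=-d_A'd_B'd_C'$ from the proof of Theorem~\ref{thm:claw} to get $S+T=\bigl(1+(-1)^m\bigr)T$; the paper phrases the same computation through the ratio $S/T$.

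Your added check is worthwhile. For even $m$ you choose an admissible $\LL$ not perpendicular to any sideline, so that $d_A'd_B'd_C'\neq0$. The paper leaves this step implicit, and it matters: for even $m>0$ a special $\LL$ can give $S=T=0$, so the equivalence must be read as ``for all admissible $\LL$''.
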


\begin{proof}
With the notation of Theorem~\ref{thm:crit} and the abbreviations
$d_A,d_A',\dots$ of Theorem~\ref{thm:claw},
\[
  s_A=|FA|^{k}d_A^{m},\quad t_A=|AE|^{k}(d_A')^{m},
\]
and cyclically.
Rather than compute $S+T$ directly, we compare the two products by forming the
ratio $S/T$.
We find that
\[
  \frac{S}{T}=\prod_{V}\frac{s_V}{t_V}
  =\left(\frac{|FA|\,|DB|\,|EC|}{|AE|\,|BF|\,|CD|}\right)^{\!k}
   \left(\frac{d_Ad_Bd_C}{d_A'd_B'd_C'}\right)^{\!m}.
\]
The first factor equals $1$ by Lemma~\ref{lem:men1}, for every $k$. The second factor
equals $(-1)^{m}$ by \eqref{eq:DD}. Hence $S=(-1)^{m}T$, so
$S+T=\bigl(1+(-1)^{m}\bigr)T$, which vanishes for every admissible $\LL$ if and
only if $m$ is odd.
The conclusion now follows immediately from Theorem~\ref{thm:crit}.
\end{proof}

\begin{corollary}\label{cor:list}
Among Kimberling centers of index below 100, the following belong to the family
$f_{k,m}$ with $m$ odd, and are therefore concurrent.
\[
\begin{array}{lll}
  X_3=a^{2}S_A=f_{2,1}, & X_4=S_A^{-1}=f_{0,-1}, & X_{19}=aS_A^{-1}=f_{1,-1},\\[2pt]
  X_{25}=a^{2}S_A^{-1}=f_{2,-1}, & X_{48}=a^3S_A=f_{3,1}, & X_{63}=aS_A=f_{1,1},\\[2pt]
  X_{69}=S_A=f_{0,1}. &  & \\[2pt]
\end{array}
\]
No other Kimberling center of index below 100 belongs to this family.
\end{corollary}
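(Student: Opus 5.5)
The corollary has two parts: the seven listed centers are concurrent, and no other center of index below 100 lies in the family. The positive part is a matching exercise followed by an appeal to Theorem~\ref{thm:parity}.

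For each listed center I will take the barycentric coordinates from \cite{ETC} and rewrite them as $a^kS_A^m$ up to a symmetric factor. Here $S_A=b^2+c^2-a^2$ is twice the Conway symbol, so constant factors can be ignored, and I will use the standard identifications $\cos A=S_A/(2bc)$ and $\tan A\propto 1/S_A$ up to the symmetric factor $4\Delta$. This gives:
\begin{itemize}
\item $X_3=(a\cos A:\cdots)\equiv(a^2S_A:\cdots)$, which is $f_{2,1}$;
\item $X_4=(\tan A:\cdots)\equiv(S_A^{-1}:\cdots)$, which is $f_{0,-1}$;
\item $X_{19}=(a/S_A:\cdots)$, which is $f_{1,-1}$, by \eqref{eq:claw};
\item $X_{25}=(a^2/S_A:\cdots)$, which is $f_{2,-1}$;
\item $X_{48}=(a^2\cdot a\cos A:\cdots)\equiv(a^3S_A:\cdots)$, which is $f_{3,1}$, because $X_{48}$ has trilinears $a\cos A\cdot a/\!\cdots$ and one checks the ETC form $\sin 2A$ in trilinears;
\item $X_{63}$ has trilinears $\cos A$, hence barycentrics $a\cos A\equiv S_A$ times $1/(bc)$ scaled by $abc$, which is $aS_A$, i.e.\ $f_{1,1}$;
\item $X_{69}=(S_A:\cdots)$, which is $f_{0,1}$.
\end{itemize}
In each case $m=\pm1$ is odd, so Theorem~\ref{thm:parity} gives concurrence at once.

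The negative part is a finite check. For each $n<100$ I would decide whether the center function is proportional to $a^kS_A^m$ for some integers $k,m$. A practical test is to evaluate the ETC barycentrics on a few generic numeric triangles and fit $\log f=k\log a+m\log S_A+\text{symmetric}$. Centers involving radicals, areas mixed additively, or other non-monomial expressions drop out immediately, and the only survivors are those listed. Even-$m$ members such as $X_1$ ($f_{1,0}$), $X_2$ ($f_{0,0}$), $X_6$ ($f_{2,0}$) and $X_{31},X_{32},X_{75},X_{76}$ do belong to the family but are not concurrent. So the remark should be read as saying that nothing else with $m$ odd appears, and I would state it that way.

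The main obstacle is that exhaustive check. It is tedious rather than deep, and it depends on carefully translating trilinears to barycentrics, multiplying by $a$, so that $k$ is read off correctly.
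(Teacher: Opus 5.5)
Your positive half is the same matching-plus-Theorem~\ref{thm:parity} argument the paper relies on, and you are right to read ``this family'' as $\{f_{k,m}: m\ \text{odd}\}$. Two of your matchings, however, reach the right endpoint through wrong intermediate steps:
\begin{itemize}
\item For $X_{48}$, $a^2\cdot a\cos A=a^3\cos A\equiv a^4S_A$, which is $f_{4,1}$. The correct chain is: trilinears $\sin 2A$, so barycentrics $a\sin 2A\equiv a^2\cos A\equiv a^3S_A$.
\item For $X_{63}$, the trilinears $\cos A$ you use belong to $X_3$. The trilinears of $X_{63}$ are $\cot A$, so its barycentrics are $a\cot A\equiv\cos A\equiv S_A/(bc)\equiv aS_A$. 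Your ``$a\cos A$ scaled by $abc$'' actually gives $a^2S_A$, which is $X_3$ again.
\end{itemize}

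The genuine gap is the exclusivity claim. You assert that the finite check leaves only the seven listed centers, but you never carry it out, and in fact it does not. The center $X_{92}$ has trilinears $\csc 2A$, hence barycentrics $a\csc 2A\equiv\sec A=2bc/S_A\equiv 1/(aS_A)$. That is $f_{-1,-1}$, with $m=-1$ odd and index $92<100$.

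This is forced by structure you already have. $X_{92}$ is the isotomic conjugate of $X_{63}$ and the isogonal conjugate of $X_{48}$. By Remark~\ref{rem:gen}, the family $\{f_{k,m}: m\ \text{odd}\}$ is closed under both conjugations. The paper itself includes $92$ in Theorem~\ref{thm:persp}, with barycentric form $\sec A$ in its table.

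So the last sentence of the corollary is false as stated, and no proof of it can succeed. What can be proved is the version with eight members, adding $X_{92}=f_{-1,-1}$.

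A cheap safeguard before any numerical fit is to close your list under (O1) and (O2) and look up every result of index below $100$. Here $1/X_{63}$ and $a^2/X_{48}$ both give $X_{92}$ at once. For the fit itself, compare the ratios $f(a,b,c)/f(b,c,a)=(a/b)^k(S_A/S_B)^m$ on acute triangles. This cancels the symmetric factor, and acuteness makes every $S_A>0$, so the logarithms are defined.
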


\begin{remark}
Theorem~\ref{thm:parity} explains why the proof of Theorem~\ref{thm:claw} worked.
The specific properties of the Clawson point play only a minor role; what matters
is that the Conway symbol appears to an odd power.
Thus, the theorem replaces one isolated concurrence result by an infinite family.
\end{remark}

\section{Operations that preserve concurrence}
\label{sec:ops}

Section~\ref{sec:parity} produced one infinite family of concurrent centers.
We now show that several natural operations on barycentric center functions preserve
concurrence.
Starting from any concurrent center, these operations generate many others.
In particular, they imply that concurrence is preserved under both isogonal and
isotomic conjugation, and they will play a central role in the completeness theorem
of Section~\ref{sec:universal}.

Several simple modifications of a barycentric center function leave the concurrency locus
$\{\,\LL:S+T=0\,\}$ of Theorem~\ref{thm:crit} unchanged. We collect them here.
Together they generate, from a single concurrent center, large families of
concurrent centers; they yield the classical conjugation invariances as a special
case, and they underlie both the sufficient condition of
Theorem~\ref{thm:conc} below and the completeness theorem of~\S\ref{sec:universal}.

The simplest preserving operation is multiplication by a factor that depends only
on the angle opposite the first coordinate.

\begin{lemma}[Multiplier Lemma]
\label{lem:mult}
Let $X_0$ be concurrent with center function $f_0$.
If
\[
f(x,y,z)
=
f_0(x,y,z)
\,
\Phi\!\left(
\frac{(y^2+z^2-x^2)^2}{4y^2z^2}
\right),
\]
where $\Phi$ is any one-variable function,
then $X$ is also concurrent.
\end{lemma}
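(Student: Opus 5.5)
The argument goes through the Concurrence Condition (Theorem~\ref{thm:crit}). The key observation is that the argument of $\Phi$ is $\cos^2$ of the angle at the apex opposite the first argument. By the Law of Cosines, $\frac{(y^2+z^2-x^2)^2}{4y^2z^2}=\cos^2\theta$, where $\theta$ is the angle between the sides of lengths $y$ and $z$, that is, the angle opposite the side of length $x$. So the plan is to compute, for each of the six quantities $s_V,t_V$ in \eqref{eq:s}, which angle of which corner triangle enters $\Phi$. We then show that the extra factors multiply to the same value in $S$ and in $T$.

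First, in $T_A=(A,E,F)$ we have $s_A=f(|FA|,|AE|,|EF|)$. The first argument $|FA|$ is the side opposite $E$, so the factor is $\Phi(\cos^2\angle AEF)$. Similarly, $t_A=f(|AE|,|EF|,|FA|)$ has first argument $|AE|$, opposite $F$, giving $\Phi(\cos^2\angle AFE)$. Cyclically, $s_B$ carries $\Phi(\cos^2\angle BFD)$, $t_B$ carries $\Phi(\cos^2\angle BDF)$, $s_C$ carries $\Phi(\cos^2\angle CDE)$, and $t_C$ carries $\Phi(\cos^2\angle CED)$. Therefore
\[
S=S_0\,\Phi(\cos^2\angle AEF)\,\Phi(\cos^2\angle BFD)\,\Phi(\cos^2\angle CDE),\qquad
T=T_0\,\Phi(\cos^2\angle AFE)\,\Phi(\cos^2\angle BDF)\,\Phi(\cos^2\angle CED),
\]
where $S_0,T_0$ are the corresponding products for $f_0$.

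Second, we match the angles. Each of these angles is an angle between $\LL$ and a sideline: $\angle AEF$ and $\angle CED$ are both angles between the lines $CA$ and $\LL$, so they are equal or supplementary. Likewise $\angle BFD$ and $\angle AFE$ are angles between $AB$ and $\LL$, and $\angle CDE$ and $\angle BDF$ are angles between $BC$ and $\LL$. Since $\cos^2$ is invariant under $\theta\mapsto 180^\circ-\theta$, it depends only on the pair of lines. Hence the three $\Phi$-factors in $S$ are a permutation of those in $T$; call their common product $M$. Then $S+T=M(S_0+T_0)$, which vanishes because $X_0$ is concurrent. By Theorem~\ref{thm:crit}, $X$ is concurrent.

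The main obstacle is bookkeeping: getting the correct vertex opposite each first argument, and checking that the pairing really matches each sideline once on each side. A minor point is that $\Phi$ must be defined and finite at these values, and that $f$ must not vanish identically; otherwise the center is undefined. I would assume this tacitly, as the paper does for center functions generally. The directed-angle convention is not even needed, because squaring the cosine removes the sign ambiguity.
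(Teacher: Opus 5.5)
Your proof is correct and matches the paper's argument essentially step for step. You apply Theorem~\ref{thm:crit}, identify each $\Phi$-factor as $\Phi(\cos^2)$ of the corner angle at the pierce point opposite the first argument, and note that at each of $D,E,F$ one $s$-factor and one $t$-factor involve angles between the same pair of lines, so $S$ and $T$ acquire the same multiplier. The only thing the paper adds is the one-line check that the multiplier is symmetric in $y,z$ and homogeneous of degree $0$, so that $f$ is again a legitimate center function.
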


The argument of $\Phi$ is $\cos^{2}A$,
where $A$ is the angle opposite the first coordinate.

\begin{proof}
The multiplier is symmetric in $y,z$ and homogeneous of degree $0$, so $f$ is a
center function. Let $s_V^{0},t_V^{0}$ be the quantities of Theorem~\ref{thm:crit}
for $f_0$, with products $S_0,T_0$. Each $s_V,t_V$ for $f$ acquires the factor
$\Phi(\cos^{2}\theta)$, where $\theta$ is the angle of the corner triangle
$T_V$ at the vertex opposite the leg placed first. Since that leg
always lies opposite a vertex, these angles are the corner-triangle angles
at $D,E,F$.

At each of $D,E,F$ exactly two corner triangles meet; the two
angles there are both the angle between $\LL$ and the sideline through that
point, equal as undirected lines (vertical or supplementary, according to the
configuration; cf.\ Lemma~\ref{lem:flip}(ii)), so in every case they share the
same $\cos^{2}$ term. Hence, the factor
$\varphi_P:=\Phi(\cos^{2}\theta_P)$ produced at $P\in\{D,E,F\}$ is common to the one
$s$ and the one $t$ that involve $P$, giving
\[
  S=S_0\,\varphi_D\varphi_E\varphi_F,\qquad T=T_0\,\varphi_D\phi_E\varphi_F .
\]
Therefore $S+T=(S_0+T_0)\,\varphi_D\varphi_E\varphi_F=0$, and Theorem~\ref{thm:crit} applies.
\end{proof}

The proof of Theorem~\ref{thm:parity} identifies one infinite family of concurrent centers. We now isolate several simple transformations of barycentric center functions that preserve concurrence. These transformations allow us to generate many new concurrent centers from any known one and will play a central role in the characterization theorem of Section~\ref{sec:universal}.

\begin{definition}\label{def:ops}
For a barycentric center function $f=f(a,b,c)$ write $S_A=b^2+c^2-a^2$ and
$\cos^2\!A=S_A^2/(4b^2c^2)$. The following operations are called \emph{preserving operations}
because they preserve concurrence.
\begin{enumerate}
\item[(O1)] \emph{isotomic} $f\mapsto 1/f$, the center function of the isotomic conjugate~\cite{Yiu},
  defined wherever $f\neq0$;
\item[(O2)] \emph{isogonal} $f\mapsto a^2/f$, the center function of the isogonal conjugate~\cite{Yiu},
  defined wherever $f\neq0$;
\item[(O3)] \emph{side-power} $f\mapsto a^{k}f$ and \emph{co-side-power}
  $f\mapsto (bc)^{j}f$, $\;k,j\in\mathbb Z$;
\item[(O4)] \emph{Conway-even} $f\mapsto S_A^{2m}f$ and $f\mapsto (S_BS_C)^{2n}f$;
\item[(O5)] \emph{symmetric} $f\mapsto \sigma(a,b,c)\,f$ with $\sigma$ symmetric;
\item[(O6)] \emph{angular multiplier} $f\mapsto \Phi(\cos^2\!A)\,f$ for any one-variable function $\Phi$.
\end{enumerate}
\end{definition}

\begin{lemma}
\label{lem:closure}
Each preserving operation in Definition~\ref{def:ops} leaves the concurrency locus
\[
\{\LL: S+T=0\}
\]
of Theorem~\ref{thm:crit} unchanged.
\end{lemma}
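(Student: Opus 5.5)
Each operation will be checked against the criterion of Theorem~\ref{thm:crit}. The plan is to show that every operation transforms the pair $(S,T)$ into either $(\kappa S,\kappa T)$ or $(\kappa T,\kappa S)$, where $\kappa$ is a common factor that is nonzero wherever the new center function is defined. In either case $S+T=0$ holds exactly when the new sum vanishes. The bookkeeping rests on one observation, already used in the proofs of Lemma~\ref{lem:mult} and Theorem~\ref{thm:parity}. In the six quantities of \eqref{eq:s}, the first argument of $f$ is always a side-segment: $|FA|,|DB|,|EC|$ for the $s$'s and $|AE|,|BF|,|CD|$ for the $t$'s. The remaining two arguments are the other two sides of the same corner triangle.

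\textbf{Multiplicative operations.} For (O3) with $a^k$, the $s$'s and $t$'s acquire the factors $(|FA||DB||EC|)^k$ and $(|AE||BF||CD|)^k$ respectively, and these agree by Lemma~\ref{lem:men1}. For the co-side-power $(bc)^j$, I will write $(bc)^j=(abc)^j a^{-j}$. The factor $(abc)^j$ is symmetric, so it falls under (O5), and $a^{-j}$ is the case just treated.

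For (O5), each $s_V$ and $t_V$ is evaluated on the same triple of side lengths of $T_V$, only permuted. A symmetric $\sigma$ therefore contributes the same factor $\sigma_V$ to both, and $\kappa=\sigma_A\sigma_B\sigma_C$. Case (O6) is exactly Lemma~\ref{lem:mult}. Case (O4) with $S_A^{2m}$ follows from \eqref{eq:DD}: the ratio of the products of the $d$'s is $-1$, so its even power is $1$. For $(S_BS_C)^{2n}$ I will write $(S_BS_C)^{2n}=(S_AS_BS_C)^{2n}S_A^{-2n}$. The first factor is symmetric, and the second is the previous case. Alternatively, one can note that $S_A^{2}=4b^2c^2\cos^2A$ and reduce to (O3) together with (O6).

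\textbf{Conjugations.} For (O1), replacing $f$ by $1/f$ sends $(S,T)$ to $(1/S,1/T)$. Then $1/S+1/T=(S+T)/(ST)$, so the locus is unchanged wherever $f\ne0$. For (O2), write $a^2/f$ as the composite of $1/f$ with the side-power $a^2$, and apply (O1) and then (O3).

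\textbf{Main obstacle.} The delicate point is not the algebra but legitimacy. One issue is the undirected-angle claim inside (O6), that the corner-triangle angles at $D,E,F$ have equal $\cos^2$; I will take this from Lemma~\ref{lem:mult}. The other is that the factor $\kappa$ must be nonzero and finite, so that the loci truly coincide rather than one merely containing the other. I will state this as the standing hypothesis that the transformed function is defined and nonvanishing on the relevant corner triangles. The same hypothesis covers the ``wherever $f\ne0$'' clauses and the factors $S_A$ and $\Phi$.
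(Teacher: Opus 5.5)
Your proposal is correct and follows essentially the same route as the paper. (O1) goes through $(S+T)/(ST)$, (O2) is (O1) composed with side-power, side-power uses Lemma~\ref{lem:men1}, (O5) gives a common per-triangle factor, and (O6) is Lemma~\ref{lem:mult}. Your small variations are all valid: rewriting $(bc)^j$ as $(abc)^j a^{-j}$, using \eqref{eq:DD} for $S_A^{2m}$, and splitting $(S_BS_C)^{2n}$ as $(S_AS_BS_C)^{2n}S_A^{-2n}$. The last one explicitly covers the $(S_BS_C)^{2n}$ case, which the paper's reduction of (O4) to (O3) and (O6) leaves implicit.
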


\begin{proof}
Recall from Theorem~\ref{thm:crit} that
\[
s_A=f(|FA|,|AE|,|EF|),\qquad
t_A=f(|AE|,|EF|,|FA|),
\]
with analogous definitions for $B$ and $C$, and that
\[
S=s_As_Bs_C,\qquad
T=t_At_Bt_C.
\]

\smallskip
\noindent
\textbf{Operation (O1): isotomic conjugation.}
Replacing $f$ by $1/f$ sends
\[
S\longmapsto \frac1S,\qquad
T\longmapsto \frac1T.
\]
Consequently,
\[
S+T
\longmapsto
\frac1S+\frac1T
=
\frac{S+T}{ST},
\]
which has the same zero locus wherever $S$ and $T$ are finite and nonzero.

\smallskip
\noindent
\textbf{Operation (O2): isogonal conjugation.}
Replacing $f$ by $a^2/f$ is simply the composition of isotomic conjugation with
the side-power operation ($k=2$). Since each preserves the concurrency locus,
their composition does as well.

\smallskip
\noindent
\textbf{Operations (O3)--(O5).}
Each of these operations multiplies every $s_V$ and every $t_V$ by a factor
depending only on the corresponding corner triangle.

For the side-power operation $f\mapsto a^k f$, the products acquire the factors
\[
|FA|^k|DB|^k|EC|^k
\quad\text{and}\quad
|AE|^k|BF|^k|CD|^k,
\]
which are equal by Lemma~\ref{lem:men1}. Thus, both $S$ and $T$ are multiplied by the same
nonzero quantity.

The co-side-power operation is similar. The $\LL$-segment lengths cancel, and
Lemma~\ref{lem:men1} again shows that $S$ and $T$ receive the same factor.

For the Conway-even case, note that $S_A^{2m} = 4^m (bc)^{2m}(\cos^2 A)^m$.
In other words, operation (O4) is just co-side-power (O3) composed with the angular multiplier (O6)
and a symmetric constant.

For the Conway-even, symmetric, and angular operations, each corner triangle
contributes exactly the same multiplier to the corresponding $s$ and $t$. Hence
both $S$ and $T$ are multiplied by the same cyclic product.

In every case,
\[
S+T\longmapsto \Lambda(S+T),
\]
where $\Lambda\neq0$, so the zero locus is unchanged.

\smallskip
\noindent
\textbf{Operation (O6): angular multiplication.}
This has already been proven as the Multiplier Lemma (Lemma~\ref{lem:mult}).
\end{proof}

In other words, we have proven the following.

\begin{theorem}
\label{thm:pres}
Let center $X$ be concurrent for some admissible transversal $\LL$.
Suppose the center function for $X$ is $f$.
Let $X'$  be the center whose center function is obtained from $f$ by applying a preserving operation.
Then $X'$ is concurrent for $\LL$.
\end{theorem}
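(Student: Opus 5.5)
The plan is to deduce Theorem~\ref{thm:pres} directly from Theorem~\ref{thm:crit} together with Lemma~\ref{lem:closure}. First, fix the admissible transversal $\LL$ and form the quantities $s_V,t_V$ of \eqref{eq:s} for $f$, with $S=s_As_Bs_C$ and $T=t_At_Bt_C$. Since $X$ is concurrent for $\LL$, Theorem~\ref{thm:crit} gives $S+T=0$. Next, let $f'$ be obtained from $f$ by one of (O1)--(O6), and write $S',T'$ for the corresponding products. It suffices to show $S'+T'=0$; applying Theorem~\ref{thm:crit} again to $f'$ then gives the concurrence of the three cevians for $X'$.

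The core step is to check, operation by operation, that $S'+T'=\Lambda\,(S+T)$ for some finite $\Lambda$; this is the content of Lemma~\ref{lem:closure}. The cases go as follows.
\begin{enumerate}
\item[(O3)] Here $\Lambda$ is a product of side lengths of the corner triangles. Its $S$- and $T$-versions agree by Lemma~\ref{lem:men1}.
\item[(O5), (O6)] Each corner triangle gives the same multiplier to its $s$ and its $t$: a symmetric function of the three sides, or $\Phi(\cos^2)$ of the angle at $D$, $E$, or $F$, which two corner triangles share. This is the argument of Lemma~\ref{lem:mult}.
\item[(O4)] This reduces to (O3), (O5) and (O6) via $S_A^{2m}=4^m(bc)^{2m}(\cos^2A)^m$. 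For the factor $(S_BS_C)^{2n}$ I would use the analogous identity at the other two angles, regrouping by the points $D,E,F$ in the same way.
\item[(O1)] This gives $S'+T'=(S+T)/(ST)$.
\item[(O2)] This is (O1) followed by (O3).
\end{enumerate}
Compositions are handled by iterating.

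The main obstacle is the degenerate cases, where the transformed function must still define a point and a cevian. For (O1) and (O2) we need $S,T\neq0,\infty$. If, say, $s_A=0$, the $X$-point of $T_A$ lies on line $AF$, and the isotomic or isogonal conjugate is not defined there. I would handle this by stating, as the definitions in Definition~\ref{def:ops} already allow, that the conclusion is asserted wherever $f'$ is defined and nonzero at the six relevant argument triples. Under that hypothesis $ST$ is finite and nonzero, and $S+T=0$ implies $S'+T'=0$.

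There is one further subtlety. The cevian $AG$ for $f'$ is determined by $s_A':t_A'$ even when the first barycentric coordinate degenerates, because the proof of Theorem~\ref{thm:crit} only uses the last two coordinates. So no further case analysis should be needed.
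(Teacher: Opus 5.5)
Your proposal is correct and follows the paper's own route. The paper states Theorem~\ref{thm:pres} as an immediate restatement of Lemma~\ref{lem:closure}, which checks operation by operation that $S+T$ is multiplied by a common factor $\Lambda$, and then applies Theorem~\ref{thm:crit}: the factor comes from Lemma~\ref{lem:men1} for the side factors, from the shared $\cos^2$ at $D,E,F$ for the angular factors, and from $S'+T'=(S+T)/(ST)$ for the conjugations. Your explicit handling of the $(S_BS_C)^{2n}$ factor and of the nondegeneracy hypothesis (finite, nonzero $s'_V,t'_V$) only spells out what the paper leaves implicit.
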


\begin{corollary}[Conjugation invariance]\label{cor:conj}
Suppose $X$ is concurrent for the admissible line $\LL$ and lies on no sideline
of any corner triangle, so that $S\neq0$ and $T\neq0$. Then its isogonal
conjugate $X^{\ast}$ and its isotomic conjugate $X^{-1}$ are also concurrent
for $\LL$.
\end{corollary}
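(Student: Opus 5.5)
The corollary follows from Theorem~\ref{thm:pres} once we identify the two conjugations with preserving operations. The isotomic conjugate $X^{-1}$ has center function $1/f$, which is operation (O1). The isogonal conjugate $X^{\ast}$ has center function $a^2/f$, which is operation (O2). So the work reduces to checking that these operations are legitimate for the given $\LL$, that is, that the new quantities $s_V',t_V'$ are finite and nonzero. This is where the hypothesis that $X$ lies on no sideline of any corner triangle is used.

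First I translate the hypothesis into a statement about $S$ and $T$. In barycentrics relative to $T_A=(A,E,F)$, the point $G$ is $(\ast:s_A:t_A)$. If $s_A=0$, then $G$ lies on the sideline $AF$ of $T_A$; if $t_A=0$, then $G$ lies on $AE$. Since $G$ lies on no sideline, $s_A$ and $t_A$ are both nonzero, and the same argument at $B$ and $C$ gives $s_V\neq0$ and $t_V\neq0$ for all $V$. Hence $S\neq0$ and $T\neq0$.

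Next comes the isotomic case. For $1/f$ the new quantities are $s_V'=1/s_V$ and $t_V'=1/t_V$, all finite and nonzero. Therefore
\[
S'+T'=\frac1S+\frac1T=\frac{S+T}{ST}.
\]
Since $X$ is concurrent for $\LL$, Theorem~\ref{thm:crit} gives $S+T=0$, so $S'+T'=0$. Applying Theorem~\ref{thm:crit} again, $X^{-1}$ is concurrent for $\LL$.

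For the isogonal case I apply the side-power step to $1/f$. The quantities become $s_A''=|FA|^2/s_A$ and $t_A''=|AE|^2/t_A$, and cyclically for $B$ and $C$. This gives
\[
S''=\frac{(|FA|\,|DB|\,|EC|)^2}{S},\qquad T''=\frac{(|AE|\,|BF|\,|CD|)^2}{T}.
\]
By Lemma~\ref{lem:men1} the two numerators are equal to a common value, nonzero because $\LL$ is admissible. So $S''+T''$ is that common value times $(S+T)/(ST)$, which is $0$. Theorem~\ref{thm:crit} then shows $X^{\ast}$ is concurrent for $\LL$.

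The only delicate point is the first step, converting "lies on no sideline" into the nonvanishing of every $s_V$ and $t_V$. One must also note that the cevians for $X^{\ast}$ and $X^{-1}$ are then well defined: their last two coordinates are not both zero, so each conjugate point differs from the apex of its corner triangle.
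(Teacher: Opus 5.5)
Your proposal is correct and follows the paper's own argument. The paper also identifies the two conjugations with (O1) and (O2) via Lemma~\ref{lem:closure}: it uses $1/S+1/T=(S+T)/(ST)$ for the isotomic case, and treats the isogonal case as isotomic conjugation followed by the side-power $k=2$, whose factors on $S$ and $T$ agree by Lemma~\ref{lem:men1}. The only difference is that you carry out those computations inline and explicitly show why the sideline hypothesis forces every $s_V,t_V\neq0$.
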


\begin{proof}
These are the operations (O1) and (O2) of Definition~\ref{def:ops}, shown to preserve the
concurrency locus in Lemma~\ref{lem:closure}; the hypothesis $S,T\neq0$ keeps the
conjugate center functions finite and nonzero.
\end{proof}

\begin{remark}\label{rem:gen}
The monomial family of \S\ref{sec:parity} is visibly closed under (O1) and (O2): isogonal
sends $f_{k,m}$ to $f_{2-k,-m}$ and isotomic sends it to $f_{-k,-m}$, both of
which again have $m$ odd. From the geometric seeds $X_3=f_{2,1}$, $X_4=f_{0,-1}$,
and $X_{19}=f_{1,-1}$ the whole family $\{f_{k,m}:m\ \text{odd}\}$ is generated
(using (O3) and (O4), the seeds $X_3,X_4$ give the even values of $k$, and $X_{19}$ the odd). More
generally, Corollary~\ref{cor:conj} together with
Theorems~\ref{thm:circ}, \ref{thm:orth}, and \ref{thm:claw} shows that every
center obtained from $X_3$, $X_4$, or $X_{19}$ by repeated isogonal and isotomic
conjugation is concurrent---for instance $X_{63},X_{69},X_{264}$, and
$X_{304}$---and this applies equally to concurrent centers outside the monomial
family.
\end{remark}

\begin{remark}
\label{remark:f}
Equivalently, if a concurrent center has first barycentric coordinate of the
form $g(A)$, then the center whose first barycentric coordinate is
$g(A)\,f(\cos^2 A)$ is also concurrent, for any function $f$ of one variable.
\end{remark}

\begin{definition}
An expression that is of the form
\begin{equation}
\label{eq:form1}
\cos A\cdot f(\cos^2A)
\end{equation}
or
\begin{equation}
\label{eq:form2}
\sin A\cos A\cdot f(\cos^2A)
\end{equation}
where $f$ is a function of one variable
is said to be in \emph{concurrent form}.
\end{definition}

The circumcenter and the Clawson point furnish one seed for each form. By
Corollary~\ref{cor:list}, $X_3$ has barycentric center function $a^2S_A$ and
$X_{19}$ has $a/S_A$; using $a\equiv\sin A$ and $S_A\equiv\csc A\cos A=\cot A$ modulo
symmetric factors (the Law of Sines and $S_A=2bc\cos A$), these reduce to
\[
  [X_3]\quad a^2S_A\equiv \sin A\cos A,\qquad
  [X_{19}]\quad a/S_A \equiv \cos A\cdot\frac{1-\cos^2A}{\cos^2A}.
\]
The center function for $X_3$ has been written in the concurrent form \eqref{eq:form2}.
The center function for $X_{19}$ has been written in the concurrent form \eqref{eq:form1}.
Both were
proven concurrent by elementary geometry (Theorems~\ref{thm:circ}
and~\ref{thm:claw}). Multiplying each by an arbitrary $f(\cos^2A)$, as in
Remark~\ref{remark:f}, fills out both forms, so we obtain the following,
giving a broad sufficient condition for concurrence.

\begin{theorem}
\label{thm:conc}
Let $X$ be a center whose first barycentric coordinate is in concurrent form.
Then $X$ is concurrent.
\end{theorem}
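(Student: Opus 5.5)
The plan is to reduce Theorem~\ref{thm:conc} to the two geometric seeds and the Multiplier Lemma. A center function may be replaced by any equivalent one, that is, multiplied by a symmetric factor, without changing the center. Operation (O5) shows such a factor also leaves the concurrency locus unchanged. So I will work throughout with trigonometric first coordinates modulo symmetric factors, exactly as in the reduction of $[X_3]$ and $[X_{19}]$ displayed just before the statement.

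\emph{Form \eqref{eq:form2}.} Suppose the first coordinate is $\sin A\cos A\cdot f(\cos^2A)$. By the Law of Sines, $\sin A = a/(2R)$. From $S_A=2bc\cos A$ we get $\cos A = S_A/(2bc)$. Hence
\[
\sin A\cos A \equiv \frac{aS_A}{bc}\equiv a^2S_A ,
\]
where the last step multiplies by the symmetric factor $abc$. This is $X_3=f_{2,1}$, which is concurrent by Theorem~\ref{thm:circ}. I will check that $\cos^2A = S_A^2/(4b^2c^2)$ is exactly the argument appearing in Lemma~\ref{lem:mult}. Then the center $a^2S_A\cdot f(\cos^2A)$ is concurrent by the Multiplier Lemma (operation (O6)). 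By the equivalence above, this center is $X$.

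\emph{Form \eqref{eq:form1}.} Suppose the first coordinate is $\cos A\cdot f(\cos^2A)$. Write it as
\[
\cos A\cdot \frac{1-\cos^2A}{\cos^2A}\cdot g(\cos^2A), \qquad g(t)=\frac{t\,f(t)}{1-t},
\]
which is legitimate because $\cos^2A\neq 1$ in a nondegenerate triangle. The first two factors are $\equiv a/S_A$, the Clawson point $X_{19}$, concurrent by Theorem~\ref{thm:claw}. Applying Lemma~\ref{lem:mult} with $\Phi=g$ then gives the result. An alternative route is to note that $\cos A\equiv S_A/(bc)\equiv aS_A$ (again using the symmetric factor $abc$), which is $f_{1,1}=X_{63}$ and concurrent by Theorem~\ref{thm:parity}. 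This avoids dividing by $1-\cos^2A$.

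\emph{Main obstacle.} The delicate point is bookkeeping in the reduction: every trigonometric rewriting must change the barycentric function only by a factor that is either symmetric in $a,b,c$ (operation (O5)) or a function of $\cos^2A$ alone (operation (O6)). One must also confirm that, when evaluated on each corner triangle, ``$A$'' refers to the angle opposite the first argument, as in Lemma~\ref{lem:mult}. Otherwise the identification with the $s_V,t_V$ of Theorem~\ref{thm:crit} fails. A second, minor issue is degeneracy. If $f(\cos^2 A)$ vanishes or is undefined at some corner angle, the center is not defined there, so the statement should be read wherever the center exists. Lemma~\ref{lem:mult} itself needs no nonvanishing hypothesis, since $S+T$ is simply multiplied by $\varphi_D\varphi_E\varphi_F$.
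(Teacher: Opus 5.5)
Your proposal is correct and is essentially the paper's own argument. You reduce modulo symmetric factors to the seeds $X_3\equiv\sin A\cos A$ and $X_{19}\equiv\cos A\,(1-\cos^2A)/\cos^2A$ (Theorems~\ref{thm:circ} and~\ref{thm:claw}), then fill out each concurrent form with the Multiplier Lemma (Remark~\ref{remark:f}), where your explicit multiplier $g(t)=t f(t)/(1-t)$ simply spells out a step the paper leaves implicit. Your alternative seed $X_{63}\equiv\cos A\equiv aS_A=f_{1,1}$ (via Theorem~\ref{thm:parity}) is a harmless variant that also avoids the $\cos A$ in the denominator of the Clawson seed.
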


The next section shows that Theorem~\ref{thm:conc} is best possible.
Every concurrent center is, up to a symmetric factor, of one of the
forms described above.

\section{Characterization of concurrent centers}
\label{sec:universal}

Sections~\ref{sec:parity} and~\ref{sec:ops} produced broad families of centers that are concurrent for every
admissible transversal.  We now prove that these families are complete.  The goal
of this section is to characterize all center functions whose cevians are
concurrent for every admissible transversal.

The main result is the Characterization Theorem (Theorem~\ref{thm:universal}), followed by the
explicit trigonometric normal form in Theorem~\ref{thm:normalForm}.
\subsection{The shape function and the leg--swap ratio}

We now change the point of view.  Instead of working directly with the
side-length function $f(a,b,c)$, we describe a corner triangle by its angles.

A \emph{shape} will mean an ordered angle triple
\[
\sigma=(\alpha,\beta,\gamma),
\qquad
\alpha+\beta+\gamma=\pi,
\qquad
0<\alpha,\beta,\gamma<\pi.
\]
The angles $\alpha,\beta,\gamma$ are ordered so that they are opposite the first,
second, and third sides of the corner triangle
corresponding to the arguments of the center function $f$, 
as shown in Figure~\ref{fig:cornerAEF}.
We shall
call $\alpha$ and $\beta$ the \emph{base angles} of the shape, and $\gamma$ the
\emph{apex angle}.

\begin{figure}[h!t]
\centering
\includegraphics[width=0.55\linewidth]{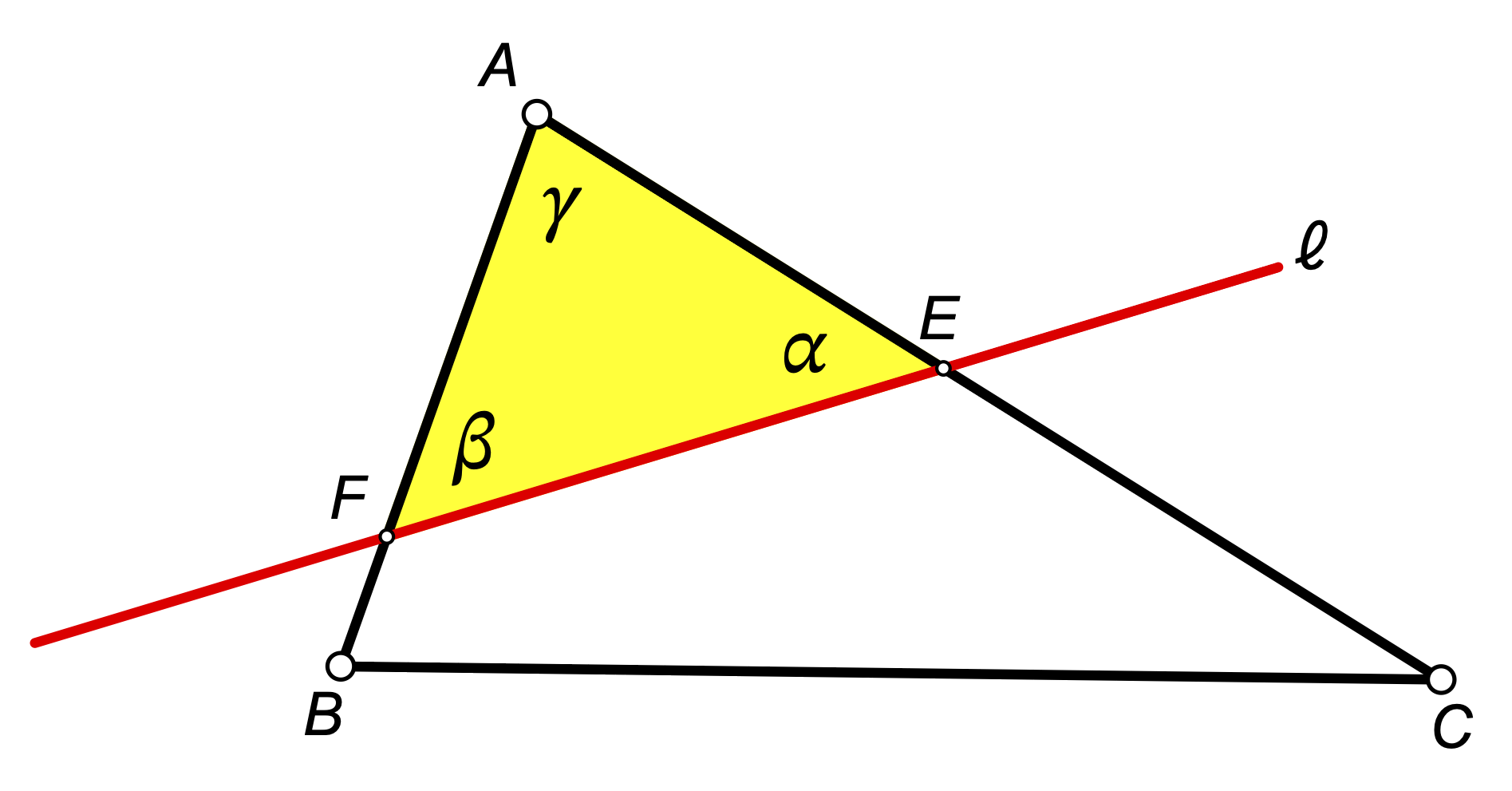}
\caption{The ordered angle triple for the corner triangle $T_A=AEF$.}
\label{fig:cornerAEF}
\end{figure}

Since $f$ is homogeneous, the value of
\[
f(|FA|,|AE|,|EF|)
\]
depends only on the shape of the triangle $AEF$, not on its size.  By the Law of
Sines, the side lengths of a triangle are proportional to the sines of the
opposite angles.  We therefore define the \emph{shape function} associated with
$f$ by
\[
F(\alpha,\beta,\gamma)
:=
f(\sin\alpha,\sin\beta,\sin\gamma).
\]

Since every center function is symmetric in its second and third arguments,
\[
f(p,q,r)=f(p,r,q),
\]
the shape function satisfies
\begin{equation}
\label{eq:legsym}
F(\alpha,\beta,\gamma)=F(\alpha,\gamma,\beta).
\end{equation}

Consider the corner triangle $T_A=AEF$.  The side $FA$ is opposite
$\angle AEF$, the side $AE$ is opposite $\angle AFE$, and the side $EF$ is
opposite $\angle EAF$.  Thus the shape of $T_A$, in the order corresponding to
\[
s_A=f(|FA|,|AE|,|EF|),
\]
is
\[
\sigma_A=(\angle AEF,\angle AFE,\angle EAF).
\]
Similarly, let $\sigma_B$ and $\sigma_C$ denote the corresponding ordered shapes
of the corner triangles $T_B$ and $T_C$.

Using the homogeneity of $f$ and the Law of Sines, we have
$$\frac{t_A}{s_A}=\frac{f(|AE|,|EF|,|FA|)}{f(|FA|,|AE|,|EF|)}=
\frac{f(2R\sin\beta,2R\sin\gamma,2R\sin\alpha)}{f(2R\sin\alpha,2R\sin\beta,2R\sin\gamma)}$$
$$=\frac{f(\sin\beta,\sin\gamma,\sin\alpha)}{f(\sin\alpha,\sin\beta,\sin\gamma)}
=\frac{F(\beta,\gamma,\alpha)}{F(\alpha,\beta,\gamma)}.$$

Using the symmetry of $F$ in its last two variables, this becomes
\[
\frac{t_A}{s_A}
=
\frac{F(\beta,\alpha,\gamma)}
     {F(\alpha,\beta,\gamma)}.
\]

This motivates the following definition.  For any shape
$\sigma=(\alpha,\beta,\gamma)$, define the \emph{leg--swap ratio}
\begin{equation*}
Q(\sigma)
=
Q(\alpha,\beta,\gamma)
:=
\frac{F(\beta,\alpha,\gamma)}
     {F(\alpha,\beta,\gamma)}.
\end{equation*}
Thus, $Q$ measures the effect of interchanging the two base angles while keeping
the apex angle fixed.

Interchanging the two base angles twice returns to the original shape, so
\[
Q(\beta,\alpha,\gamma)=Q(\alpha,\beta,\gamma)^{-1}.
\]

For the three corner triangles, we therefore have
\[
\frac{t_A}{s_A}=Q(\sigma_A),\qquad
\frac{t_B}{s_B}=Q(\sigma_B),\qquad
\frac{t_C}{s_C}=Q(\sigma_C).
\]

The concurrence condition of Theorem~\ref{thm:crit} can now be written in terms of $Q$.
Since
\[
\frac{T}{S}
=
\frac{t_At_Bt_C}{s_As_Bs_C}
=
Q(\sigma_A)Q(\sigma_B)Q(\sigma_C),
\]
the condition $S+T=0$ is equivalent, when $S\neq0$, to
\begin{equation}
\label{eq:crit-Q}
Q(\sigma_A)Q(\sigma_B)Q(\sigma_C)=-1.
\end{equation}

Thus, the problem of characterizing concurrent centers has been reduced to
understanding the single function $Q$.

\subsection{Geometry of the corner triangles}

The three corner triangles are not independent.  Once the admissible transversal
$\LL$ is chosen, the angles of one corner triangle determine those of the other
two.  The following lemma records the elementary geometry of the pierce points
that will be used in the proof of the Characterization Theorem.

Throughout this subsection, $\LL$ is an admissible transversal meeting the
sidelines $BC$, $CA$, and $AB$ at the distinct points $D$, $E$, and $F$.

\begin{definition}
A pierce point is \emph{interior} if it lies in the interior of its side
(for example, $F$ is interior precisely when $A-F-B$), and \emph{exterior}
otherwise.

Exactly one of the three pierce points lies between the other two on the line
$\LL$.  We call this the \emph{median pierce point}; the other two are called
\emph{lateral pierce points}.
\end{definition}

\begin{lemma}[Flip Lemma]
\label{lem:flip}
Let $\LL$ be an admissible transversal.

\begin{enumerate}
\item
At each pierce point the two corner base angles are either equal
or vertical angles or they are supplementary.  They are supplementary precisely when the
pierce point is interior and lateral, or exterior and median.

\item
Exactly one pierce point is supplementary; the other two are equal.

\item
Consequently, exactly one corner triangle has an external apex angle.  It is the
corner triangle that does not contain the supplementary pierce point.  The other
two corner triangles have internal apex angles.

\item
The number of interior pierce points is even.  Hence either none or exactly two
of the pierce points are interior.
\end{enumerate}
\end{lemma}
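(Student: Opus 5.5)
I would prove the Flip Lemma with an affine/order argument on the line $\LL$, combined with the elementary fact that at a point $P$ on a sideline, the two corner triangles meeting at $P$ use the same pair of lines ($\LL$ and the sideline through $P$). Their angles at $P$ are therefore either equal (vertical or identical) or supplementary. The question is only which of the four angular sectors at $P$ each triangle occupies.

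For a pierce point, say $F$ on $AB$, the corner triangles meeting there are $T_A=AEF$ and $T_B=BFD$. The angle of $T_A$ at $F$ is formed by rays $\vv{FA}$ and $\vv{FE}$. The angle of $T_B$ at $F$ is formed by rays $\vv{FB}$ and $\vv{FD}$.

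\begin{enumerate}
\item[(i)] Along $AB$, the rays $\vv{FA}$ and $\vv{FB}$ are opposite exactly when $F$ is interior, and equal otherwise.
\item[(ii)] Along $\LL$, the rays $\vv{FE}$ and $\vv{FD}$ are opposite exactly when $F$ lies between $E$ and $D$, i.e.\ $F$ is median, and equal otherwise.
\end{enumerate}

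Two angles whose sides are obtained by reversing zero or two rays are equal (vertical when two are reversed). Reversing exactly one ray gives supplementary angles. So the angles at $F$ are supplementary iff exactly one of ``interior'' and ``median'' holds. This gives the two cases of part (i): interior and lateral, or exterior and median. The same argument applies cyclically at $D$ and $E$.

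Next I would prove (iv), since (ii) depends on it. The signed Menelaus relation (Lemma~\ref{lem:men2}, or its original form) says the product of the three signed ratios $\seg{AF}/\seg{FB}$, etc., is $-1$. Each ratio is positive exactly when the pierce point is interior. A product equal to $-1$ forces an odd number of negative factors, i.e.\ an odd number of exterior points. Hence the number of interior points is $0$ or $2$.

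For (ii), exactly one point is median, and there are two cases.

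\begin{enumerate}
\item[(a)] No interior points. All three points are exterior, so the only supplementary one is the median point (exterior and median).
\item[(b)] Two interior points. If the median point were exterior, then both lateral points would be interior, and both the median point and the two lateral points would be supplementary, giving three supplementary points. I expect this to be the main obstacle, because I must show this configuration cannot occur.
\end{enumerate}

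To rule it out, I would argue that when $\LL$ crosses the interior of the triangle, it enters and exits through the two interior pierce points. The segment of $\LL$ between them lies inside the triangle, while the third sideline meets $\LL$ outside that segment. So the exterior point is lateral and the median point is interior. Then the interior points contribute one median (equal) and one lateral (supplementary), and the exterior lateral point is equal. In both cases exactly one supplementary point results.

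For (iii), the two base angles of $T_A$ sit at $E$ and $F$, and its apex is at $A$. The key observation is that the three corner angles of $T_A$ sum to $\pi$. The apex angle at $A$ equals either $\angle BAC$ or its supplement, depending on whether $E$ and $F$ lie on the rays $\vv{AC}$ and $\vv{AB}$ or on their opposites.

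I would tie this to supplementarity by a direct ray count at $A$: the apex of $T_A$ is internal iff $\vv{AE}$ and $\vv{AF}$ are both along, or both against, $\vv{AC}$ and $\vv{AB}$. I would check this against the interior/exterior data of $E$ and $F$ and the median data, case by case, using the configuration from (ii). In case (a), the configuration is a line missing the triangle, and the corner triangle whose apex is external is the one opposite the median point. In case (b), it is the one opposite the interior lateral point. In both cases that corner triangle is the one not containing the supplementary pierce point, as claimed. This verification is finite and I would present it as a short table of the two configurations.
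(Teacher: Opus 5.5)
\textbf{Parts (i), (ii), (iv).} These are sound. Your ray-reversal count in (i) genuinely proves what the paper essentially reads off a figure. Your Menelaus sign count for (iv) replaces the paper's convexity argument, which says that a line avoiding the vertices meets the interior of the triangle in a segment and so crosses zero or two sides. You also correctly prove (iv) before (ii), whose two-case split depends on it. Your argument for (ii) is the same as the paper's.

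\textbf{Part (iii).} This part is not yet a proof, and the case table you propose cannot be filled in from the labels interior/exterior and median/lateral alone. Whether $\vv{AE}$ points along or against $\vv{AC}$ depends on which endpoint of its side an exterior pierce point lies beyond. For instance, with $F,D$ interior and $E$ exterior, the external apex is at $C$ if $E$ lies beyond $C$, and at $A$ if $E$ lies beyond $A$. Tying this to which point is median needs a further argument: for example, $E$ beyond $C$ puts $E$ and $F$ on opposite sides of line $BC$, so $D$ is median.

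The paper itself merely asserts (iii). However, your own remark that the angles of a corner triangle sum to $\pi$ gives a configuration-free proof once you apply it to all three corner triangles. Suppose $F$ is the supplementary point.
\begin{itemize}
\item Add the angle sums of $T_A$ and $T_B$, and use $\angle AFE+\angle BFD=\pi$.
\item Substitute the equalities $\angle AEF=\angle CED$ and $\angle BDF=\angle CDE$.
\item Compare with the angle sum of $T_C$.
\end{itemize}
This yields $\angle EAF+\angle FBD=\angle DCE$. Each apex angle is either the angle of $ABC$ at that vertex or its supplement. So the only possibility is $\angle EAF=A$, $\angle FBD=B$, $\angle DCE=\pi-C$. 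Every other combination is either impossible or forces a right angle at $A$ or $B$, where the two choices coincide.

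The same bookkeeping shows that three supplementary pierce points would make the apex angles sum to $0$. So it could even replace the convexity step in your proof of (ii).
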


\begin{proof}
At each pierce point, the two adjacent corner triangles are either equal or determine a pair
of vertical angles; or they are a pair of supplementary angles (Figure~\ref{fig:flip1}).

\begin{figure}[h!t]
\centering
\includegraphics[width=0.7\linewidth]{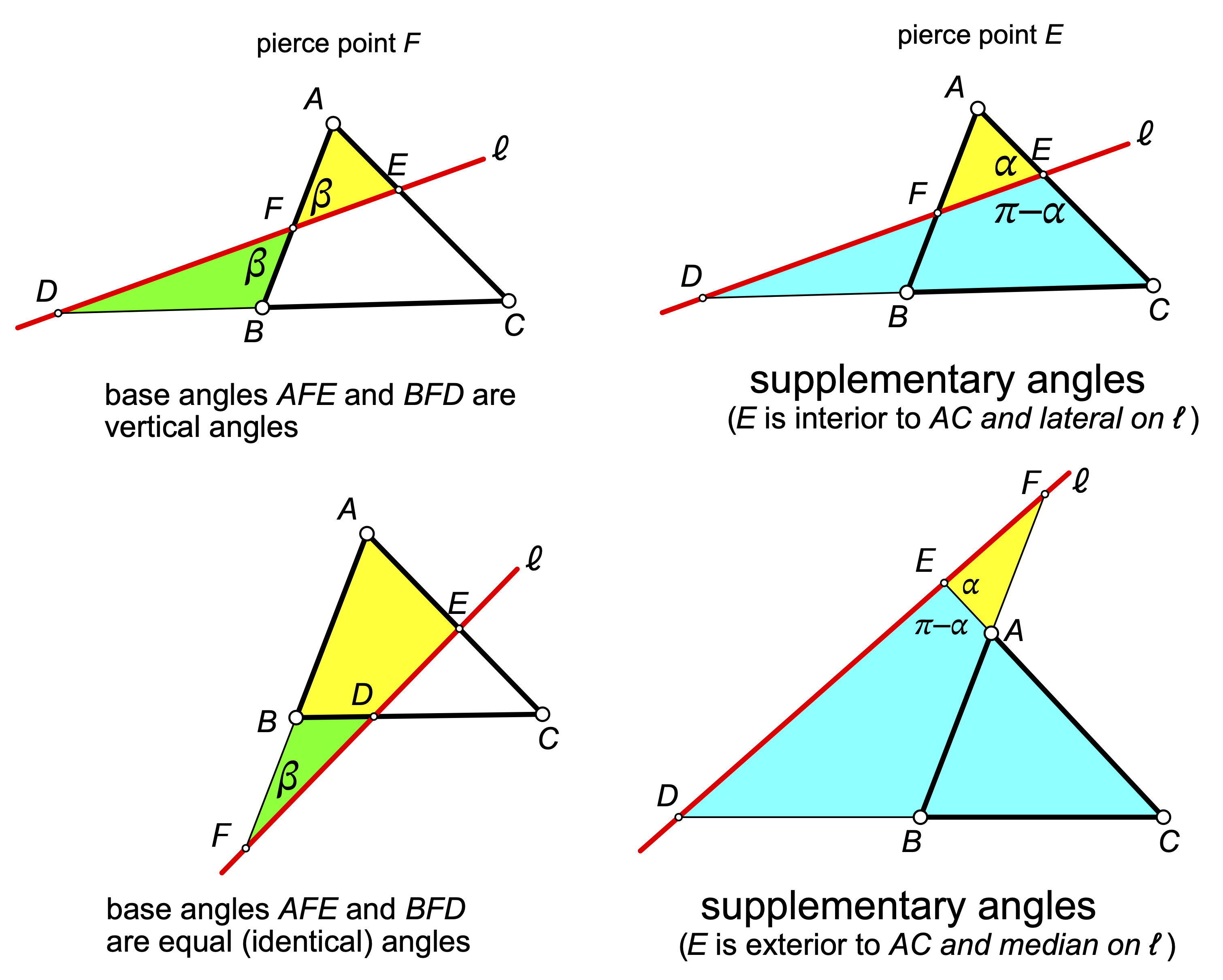}
\caption{case (1)}
\label{fig:flip1}
\end{figure}
The supplementary case
occurs precisely when the pierce point is interior and lateral, or exterior and
median.
This proves (1).

We next show that exactly one pierce point is supplementary.

If none of the pierce points is interior, then all three are exterior (Figure~\ref{fig:flip2}, left).
In this case, the unique median pierce point is exterior and median, so it is
supplementary by (1), while the two lateral pierce points are exterior and lateral, hence equal.
\begin{figure}[h!t]
\centering
\includegraphics[width=0.7\linewidth]{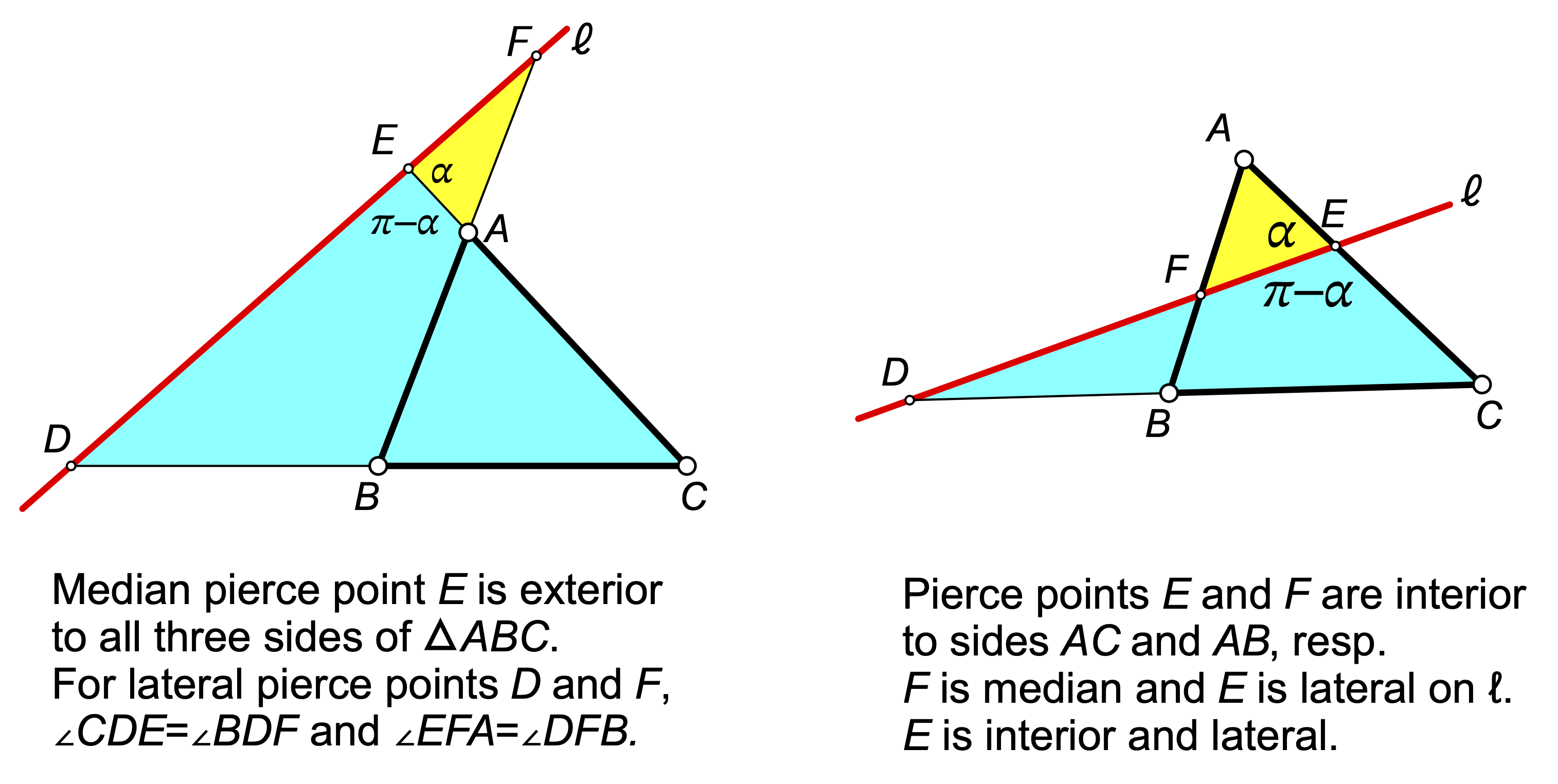}
\caption{case (2)}
\label{fig:flip2}
\end{figure}

If two pierce points are interior, then these two points are the endpoints of the
segment $\LL\cap ABC$.  The third pierce point is exterior and lies outside this
segment (Figure~\ref{fig:flip2}, right).
Hence the median pierce point is one of the two interior points.  Of
the two interior points, one is median and one is lateral.  Thus exactly one
pierce point is interior and lateral, and this is the unique supplementary point.
This proves (2).

The corner triangle with external apex angle is the one opposite the
supplementary pierce point, meaning the corner triangle that does not contain
that pierce point ($\triangle BFD$ in Figure~\ref{fig:flip2}).  This proves (3).

Finally, a line not passing through a vertex can meet the interior of a triangle
only in a segment.  Therefore it enters and leaves the triangle through two sides,
or else it does not enter the triangle at all.  Hence the number of interior
pierce points is either two or zero, proving (4).
\end{proof}

To prove the Characterization Theorem it is convenient to write the three corner
shapes explicitly.  Since the three possible branches differ only by cyclic
permutation of $A$, $B$, and $C$, it suffices to consider one representative case.

Assume that the corner triangle opposite $C$, namely $T_C=CDE$, is the unique
corner triangle with an external apex angle, and set
\[
x=\angle BDF.
\]
Then an angle chase gives
\[
\angle CDE=x,\qquad
\angle CED=C-x,\qquad
\angle DCE=\pi-C.
\]
Similarly for the other two corner triangles.  Writing $C=\pi-A-B$, the three
ordered shapes are
\begin{equation}
\label{eq:shapes}
\begin{aligned}
\sigma_A&=(C-x,\;B+x,\;A),\\
\sigma_B&=(\pi-B-x,\;x,\;B),\\
\sigma_C&=(x,\;C-x,\;\pi-C).
\end{aligned}
\end{equation}
Here the first two entries are the base angles of the shape and the third entry
is the apex angle.

The remaining two branches are obtained by cyclically permuting
$A\rightarrow B\rightarrow C$.  Moreover, the parameters $(A,B,x)$ vary
independently over an open set.  This local freedom will be essential in the
proof of Theorem~\ref{thm:universal}.

\subsection{The Characterization Theorem}
\label{sec:characterization}

Sections \ref{sec:parity} and \ref{sec:ops} produced broad families of centers that are concurrent for
every admissible transversal.  We now prove that these families are complete.

The idea is that a universally concurrent center has only one essential
non-symmetric part.  Up to multiplication by a fully symmetric factor, its shape
function depends only on the first angle of the ordered shape.

We shall use the following terminology.  A function
\[
M(\alpha,\beta,\gamma)
\]
is called \emph{fully symmetric} if it is unchanged by every permutation of
\(\alpha,\beta,\gamma\).

A function \(\phi\) on \((0,\pi)\) is said to be \emph{odd about \(\pi/2\)} if
\[
\phi(\pi-t)=-\phi(t)
\]
for all \(t\in(0,\pi)\).  Thus, \(\phi\) takes opposite values at supplementary
angles.

It is said to be \emph{even about \(\pi/2\)} if
\[
\phi(\pi-t)=\phi(t)
\]
for all \(t\in(0,\pi)\).

Sections \ref{sec:parity} and \ref{sec:ops}  produced broad families of centers that are concurrent for every admissible transversal. We now prove that these families are complete. The central result of this section shows that every universally concurrent center has a remarkably simple structure.

\begin{theorem}[Characterization Theorem]
\label{thm:universal}
A center function \(f\) is concurrent for every admissible transversal if and
only if its shape function factors as
\begin{equation}
\label{eq:family}
F(\alpha,\beta,\gamma)
=
\phi(\alpha)\,M(\alpha,\beta,\gamma),
\end{equation}
where \(M\) is fully symmetric and \(\phi\) is odd about \(\pi/2\).

Equivalently, the leg--swap ratio has the form
\begin{equation}
\label{eq:Qcharacterization}
Q(\alpha,\beta,\gamma)
=
\frac{\phi(\beta)}{\phi(\alpha)},
\end{equation}
where \(\phi\) is odd about \(\pi/2\).
\end{theorem}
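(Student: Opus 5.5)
The plan is to translate the condition \eqref{eq:crit-Q} into a functional equation for a two-variable function, using the explicit shapes \eqref{eq:shapes}. Since the apex angle is determined by the two base angles, write $q(\alpha,\beta)=Q(\alpha,\beta,\pi-\alpha-\beta)$, so that $q(\beta,\alpha)=q(\alpha,\beta)^{-1}$. We assume throughout that $F$ does not vanish on the relevant shapes, so that $Q$ is defined.

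\emph{Sufficiency.} Suppose $F=\phi(\alpha)M$ with $M$ fully symmetric. Then $M$ cancels in the leg--swap ratio, which gives $Q(\alpha,\beta,\gamma)=\phi(\beta)/\phi(\alpha)$; this is the equivalence of \eqref{eq:family} and \eqref{eq:Qcharacterization} in one direction. Plugging in \eqref{eq:shapes}:
\[
Q(\sigma_A)Q(\sigma_B)Q(\sigma_C)=\frac{\phi(B+x)}{\phi(C-x)}\cdot\frac{\phi(x)}{\phi(\pi-B-x)}\cdot\frac{\phi(C-x)}{\phi(x)}=\frac{\phi(B+x)}{\phi(\pi-B-x)}=-1 .
\]
The last equality is exactly oddness about $\pi/2$. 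The other two branches of Lemma~\ref{lem:flip}(3) follow by cyclic relabelling. Theorem~\ref{thm:crit}, in the form \eqref{eq:crit-Q}, then gives concurrence.

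\emph{Necessity.} Set $u=x$, $v=C-x$, $w=B+x$. Since $(A,B,x)$ range over an open set, so do $(u,v,w)$, and \eqref{eq:crit-Q} reads
\[
q(v,w)\,q(\pi-w,u)\,q(u,v)=-1 .
\]
Using $q(\pi-w,u)=q(u,\pi-w)^{-1}$, this becomes $q(u,v)\,q(v,w)=-\,q(u,\pi-w)$.

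First fix $w=w_0$. Then $q(u,v)=-q(u,\pi-w_0)/q(v,w_0)$, so $q$ factors as $g(u)h(v)$. Antisymmetry $q(u,v)q(v,u)=1$ forces $g(u)h(u)$ to be constant and $gh\equiv 1$ after normalisation. Hence $q(u,v)=\phi(v)/\phi(u)$ for a single function $\phi$; this is the cocycle $q(u,v)=q(u,t)\,q(t,v)$.

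Substituting back gives $\phi(w)/\phi(u)=-\phi(\pi-w)/\phi(u)$, that is, $\phi(\pi-w)=-\phi(w)$, so $\phi$ is odd about $\pi/2$. This yields \eqref{eq:Qcharacterization}.

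Finally, put $M:=F/\phi(\alpha)$. Leg symmetry \eqref{eq:legsym} makes $M$ symmetric in $(\beta,\gamma)$. The identity $Q=\phi(\beta)/\phi(\alpha)$ says precisely that $M(\beta,\alpha,\gamma)=M(\alpha,\beta,\gamma)$. These two transpositions generate $S_3$, so $M$ is fully symmetric.

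\emph{Main obstacle.} I expect the hard step to be the passage from ``an open set of parameters'' to ``all shapes''. The triples $(u,v,w)$ coming from admissible lines must cover enough of the domain for the separation argument to define $\phi$ on all of $(0,\pi)$. One also has to handle the points where $F$ vanishes, where $Q$ is undefined. My first attempt would use all three branches of Lemma~\ref{lem:flip} together with varying the configuration of the line, to show that every pair $(u,v)$ with $u+v<\pi$ occurs with a suitable $w$. I would then chain the cocycle identity through an intermediate angle $t$ to extend $\phi$ consistently across overlapping patches. If this chaining fails, I would fall back on assuming $F$ is real-analytic, as it is for the algebraic center functions in ETC, and extend by identity of analytic functions.
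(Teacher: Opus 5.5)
Your proposal is correct, and the globalization you flag does go through. Your necessity argument, however, takes a different route from the paper's.

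The paper linearizes. It passes to $q=\log|Q|$, differentiates the additive equation in $\mu=x$ to separate variables, and integrates to get $|Q|=\Phi(\beta)/\Phi(\alpha)$ with $\Phi$ even about $\pi/2$. Only afterwards does it recover the sign, through a separate $\pm1$-valued cocycle $\varepsilon(u,v)=\chi(u)\chi(v)$ with $\chi$ odd; this comes from setting $\mu=\lambda$ and then freezing one angle at a reference value $r$. The payoff is the explicit split $\phi=\chi\Phi$ into an even modulus and an odd sign.

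You keep the signed multiplicative equation $q(u,v)\,q(v,w)=-q(u,\pi-w)$ and separate by specialization. This is in the spirit of the paper's sign trick, applied to $Q$ itself, so modulus and sign are handled at once. It buys more than brevity. You need no differentiability of $Q$, and since every step is a pointwise identity, you only have to avoid the exceptional set where $Q$ is $0$ or $\infty$ point by point. That set is unavoidable: putting $u=v$ and $w=\pi/2$ forces $q(u,\pi/2)\in\{0,\infty\}$, so your standing nonvanishing assumption can only hold off a thin set. It also disconnects the $v$-slices, which the paper's ``integrate $\partial q/\partial v$'' step tacitly ignores. Your sufficiency computation, telescoping the explicit shapes of the representative branch, is the coordinate form of the paper's Flip Lemma argument.

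Two small corrections to the necessity sketch, and the patch (a step the paper also leaves implicit).

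\emph{Local validity.} Fixing a single $w_0$ yields $q=g(u)h(v)$ only on the rectangle $0<u<w_0$, $0<v<\pi-w_0$.

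\emph{Where $gh\equiv1$ comes from.} It follows from $q(u,u)=1$, not from a normalization, since rescaling $g$ and $h$ inversely leaves $gh$ unchanged.

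\emph{The patch.} First set $u=v$, giving $q(u,w)=-q(u,\pi-w)$ for $0<u<\min(w,\pi-w)$. The equation then becomes the cocycle $q(u,v)q(v,w)=q(u,w)$ whenever also $u+w<\pi$. With antisymmetry, $q(u,w)=q(u,r)\,q(r,w)$ for every pair with $u,w>0$, $u+w<\pi$, and every $0<r<\pi-\max(u,w)$. Hence $\phi(t):=q(r,t)/q(r,r_0)$ does not depend on the small $r$, and it gives $q(u,w)=\phi(w)/\phi(u)$ on the whole domain. Oddness then follows from the $u=v$ identity.

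No other branches and no analyticity are needed: the representative branch alone realizes every pair $(u,v)$ with $u+v<\pi$ (take any $w\in(u,\pi-v)$).
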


Before proving the theorem, we verify that the two formulations are equivalent.

Suppose first that
\[
F(\alpha,\beta,\gamma)
=
\phi(\alpha)\,M(\alpha,\beta,\gamma),
\]
where \(M\) is fully symmetric.  Then
\[
Q(\alpha,\beta,\gamma)
=
\frac{F(\beta,\alpha,\gamma)}
     {F(\alpha,\beta,\gamma)}
=
\frac{\phi(\beta)M(\beta,\alpha,\gamma)}
     {\phi(\alpha)M(\alpha,\beta,\gamma)}
=
\frac{\phi(\beta)}{\phi(\alpha)}.
\]

Conversely, suppose that
\[
Q(\alpha,\beta,\gamma)
=
\frac{\phi(\beta)}{\phi(\alpha)}.
\]
Define
\[
M(\alpha,\beta,\gamma)
=
\frac{F(\alpha,\beta,\gamma)}{\phi(\alpha)}.
\]
Then
\[
\frac{M(\beta,\alpha,\gamma)}
     {M(\alpha,\beta,\gamma)}
=
Q(\alpha,\beta,\gamma)\,
\frac{\phi(\alpha)}{\phi(\beta)}
=
1.
\]
Thus \(M\) is fully symmetric in its first two arguments.  Since every center function
is symmetric in its last two arguments, we also have
\[
F(\alpha,\beta,\gamma)=F(\alpha,\gamma,\beta).
\]
It follows that \(M\) is fully symmetric in its last two arguments as well.  Hence
\(M\) is fully symmetric.

The proof of the Characterization Theorem has two parts.  The sufficiency is
geometric: assuming the factorization
\[
F(\alpha,\beta,\gamma)=\phi(\alpha)M(\alpha,\beta,\gamma),
\]
we show that
\[
Q(\sigma_A)Q(\sigma_B)Q(\sigma_C)=-1
\]
for every admissible transversal.  The necessity is analytic: starting from this
identity for every admissible transversal, we determine the possible form of
\(Q\), and then recover the symmetric factor \(M\).

\begin{proof}[Proof of sufficiency]
Assume that
\[
F(\alpha,\beta,\gamma)=\phi(\alpha)M(\alpha,\beta,\gamma),
\]
where \(M\) is fully symmetric and \(\phi\) is odd about \(\pi/2\).  Then
\[
Q(\alpha,\beta,\gamma)=\frac{\phi(\beta)}{\phi(\alpha)}.
\]

For the three corner triangles, write
\[
\sigma_A=(\alpha_A,\beta_A,\gamma_A),\qquad
\sigma_B=(\alpha_B,\beta_B,\gamma_B),\qquad
\sigma_C=(\alpha_C,\beta_C,\gamma_C),
\]
where the first two entries are the base angles of the corresponding shape.
Then
\[
Q(\sigma_A)Q(\sigma_B)Q(\sigma_C)
=
\frac{\phi(\beta_A)}{\phi(\alpha_A)}
\frac{\phi(\beta_B)}{\phi(\alpha_B)}
\frac{\phi(\beta_C)}{\phi(\alpha_C)}.
\]

Writing these base angles explicitly gives
\[
Q(\sigma_A)Q(\sigma_B)Q(\sigma_C)
=
\frac{\phi(\angle CED)}{\phi(\angle AEF)}
\cdot
\frac{\phi(\angle AFE)}{\phi(\angle BFD)}
\cdot
\frac{\phi(\angle BDF)}{\phi(\angle CDE)}.
\]
Each factor compares the two base angles meeting at one pierce point: the first
at \(E\), the second at \(F\), and the third at \(D\).

By Lemma~\ref{lem:flip}, at two pierce points the two angles are vertical, hence equal, so
the corresponding factors are \(1\).  At the remaining pierce point the two
angles are supplementary, say \(\theta\) and \(\pi-\theta\).  Since \(\phi\) is
odd about \(\pi/2\),
\[
\frac{\phi(\theta)}{\phi(\pi-\theta)}=-1.
\]
Therefore
\[
Q(\sigma_A)Q(\sigma_B)Q(\sigma_C)=-1.
\]
By \eqref{eq:crit-Q}, this is equivalent to \(S+T=0\).  Hence, by
Theorem~\ref{thm:crit}, the cevians \(AG\), \(BH\), and \(CI\) are concurrent.
\end{proof}

\begin{proof}[Proof of necessity]
Assume that the center is concurrent for every admissible transversal.  We shall
show that its shape function has the factorization described in the theorem.

We work on an open region where the relevant functions are nonzero.  This is
the only place where logarithms and signs are used; the resulting identities then
extend to the zero set by continuity.

\emph{Step 1: The functional equation.}

Write
\[
q(u,v)=\log\left|Q(u,v,\pi-u-v)\right|.
\]
Thus \(q\) is the logarithm of the absolute value of the leg--swap ratio, written
as a function of the two base angles.

Taking absolute values in the concurrence condition
\[
Q(\sigma_A)Q(\sigma_B)Q(\sigma_C)=-1
\]
gives
\[
q(\sigma_A)+q(\sigma_B)+q(\sigma_C)=0.
\]

Now use the representative branch from \eqref{eq:shapes}:
\[
\sigma_A=(C-x,\;B+x,\;A),\qquad
\sigma_B=(\pi-B-x,\;x,\;B),\qquad
\sigma_C=(x,\;C-x,\;\pi-C).
\]
Introduce the variables
\[
\lambda=C-x,\qquad \mu=x,\qquad \nu=\pi-B-x.
\]
Then
\[
A=\nu-\lambda,\qquad B=\pi-\nu-\mu,\qquad x=\mu,
\]
and
\[
B+x=\pi-\nu.
\]
Thus the concurrence condition becomes
\begin{equation}
\label{eq:qFunctional}
q(\lambda,\pi-\nu)+q(\nu,\mu)+q(\mu,\lambda)=0.
\end{equation}

\emph{Step 2: Solving the functional equation.}

Differentiate \eqref{eq:qFunctional} with respect to \(\mu\).  The first term is
independent of \(\mu\).  In the second term, \(\mu\) occurs as the second
argument of \(q\); in the third term, \(\mu\) occurs as the first argument of
\(q\).  Hence
\[
\frac{\partial q}{\partial v}(\nu,\mu)
+
\frac{\partial q}{\partial u}(\mu,\lambda)
=
0.
\]
For fixed \(\mu\), the first term is independent of \(\lambda\), while the
second term is independent of \(\nu\).  Since \(\lambda,\mu,\nu\) vary
independently over an open set, each term must depend only on \(\mu\).  In
particular,
\[
\frac{\partial q}{\partial v}(u,v)
\]
depends only on \(v\).

It follows that
\[
q(u,v)=m(u)+K(v)
\]
for some one-variable functions \(m\) and \(K\).

On the other hand, interchanging the two base angles inverts \(Q\), so
\[
Q(v,u,\pi-u-v)=Q(u,v,\pi-u-v)^{-1}.
\]
Therefore
\[
q(v,u)=-q(u,v).
\]
In particular \(q(u,u)=0\), and hence
\[
m(u)+K(u)=0.
\]
Thus \(m=-K\), and so
\[
q(u,v)=K(v)-K(u).
\]

Put
\[
\Phi(t)=e^{K(t)}.
\]
Then
\begin{equation}
\label{eq:absQ}
\left|Q(\alpha,\beta,\gamma)\right|
=
\frac{\Phi(\beta)}{\Phi(\alpha)}.
\end{equation}

We shall also need one consequence of \eqref{eq:qFunctional}.  Setting
\(\mu=\lambda\) gives
\[
q(\lambda,\pi-\nu)+q(\nu,\lambda)=0.
\]
Using \(q(u,v)=K(v)-K(u)\), this becomes
\[
K(\pi-\nu)-K(\nu)=0.
\]
Hence
\[
\Phi(\pi-t)=\Phi(t).
\]
Thus \(\Phi\) is even about \(\pi/2\).

\emph{Step 3: Determining the sign.}

Equation \eqref{eq:absQ} determines the absolute value of \(Q\).  It remains to
determine its sign.

Write
\[
Q(u,v,\pi-u-v)
=
\varepsilon(u,v)\frac{\Phi(v)}{\Phi(u)},
\]
where
\[
\varepsilon(u,v)\in\{1,-1\}.
\]
Since interchanging \(u\) and \(v\) inverts \(Q\), we have
\[
\varepsilon(v,u)=\varepsilon(u,v).
\]
Also,
\[
Q(u,u,\pi-2u)=1,
\]
so
\[
\varepsilon(u,u)=1.
\]

Substituting the above expression for \(Q\) into the concurrence condition, and
using the fact that the absolute values already multiply to \(1\), gives
\begin{equation}
\label{eq:signEquation}
\varepsilon(\lambda,\pi-\nu)\,
\varepsilon(\nu,\mu)\,
\varepsilon(\mu,\lambda)
=
-1.
\end{equation}

Set \(\mu=\lambda\) in \eqref{eq:signEquation}.  Since
\(\varepsilon(\lambda,\lambda)=1\), we get
\[
\varepsilon(\lambda,\pi-\nu)\,\varepsilon(\nu,\lambda)=-1.
\]
Using the symmetry of \(\varepsilon\), this becomes
\begin{equation}
\label{eq:signSupplementary}
\varepsilon(\lambda,\pi-\nu)=-\varepsilon(\lambda,\nu).
\end{equation}

Substituting \eqref{eq:signSupplementary} back into
\eqref{eq:signEquation} gives
\[
\varepsilon(\lambda,\nu)\,
\varepsilon(\nu,\mu)\,
\varepsilon(\mu,\lambda)
=
1.
\]

Choose a fixed angle \(r\) in the interval under consideration, and define
\[
\chi(t)=\varepsilon(t,r).
\]
Taking \(\mu=r\) in the last identity gives
\[
\varepsilon(\lambda,\nu)\,\varepsilon(\nu,r)\,\varepsilon(r,\lambda)=1.
\]
Therefore
\[
\varepsilon(\lambda,\nu)=\chi(\lambda)\chi(\nu).
\]

Also, putting \(\lambda=r\) in \eqref{eq:signSupplementary} gives
\[
\chi(\pi-\nu)=-\chi(\nu).
\]
Thus \(\chi\) is odd about \(\pi/2\).

\emph{Step 4: Recovering \(\phi\) and \(M\).}

Define
\[
\phi(t)=\chi(t)\Phi(t).
\]
Since \(\Phi\) is even about \(\pi/2\) and \(\chi\) is odd about \(\pi/2\), the
function \(\phi\) is odd about \(\pi/2\).

Moreover,
\[
Q(\alpha,\beta,\gamma)
=
\varepsilon(\alpha,\beta)\frac{\Phi(\beta)}{\Phi(\alpha)}
=
\chi(\alpha)\chi(\beta)\frac{\Phi(\beta)}{\Phi(\alpha)}
=
\frac{\phi(\beta)}{\phi(\alpha)}.
\]
By the equivalence proved before the theorem, this implies that
\[
F(\alpha,\beta,\gamma)=\phi(\alpha)M(\alpha,\beta,\gamma),
\]
where \(M\) is fully symmetric.

This proves the necessity and completes the proof of the Characterization
Theorem.
\end{proof}

\subsection{The complete family of concurrent centers}

Theorems~\ref{thm:conc} and~\ref{thm:universal} together give an explicit description of every center
function whose cevians are concurrent for every admissible transversal.

\begin{theorem}[The Trigonometric Normal Form]
\label{thm:normalForm}
A center function is concurrent for every admissible transversal if and only if,
up to multiplication by a symmetric factor,
\[
f(a,b,c)
=
\cos A\,\Phi(\cos^2A)
+
\sin A\cos A\,\Psi(\cos^2A),
\]
where $A$ is the angle opposite the first argument and $\Phi,\Psi$ are functions of one variable.

Thus, the two concurrent forms of Theorem~\ref{thm:conc} are not merely sufficient; together
they exhaust all centers concurrent for every admissible transversal.
\end{theorem}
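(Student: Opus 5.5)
The plan is to reduce Theorem~\ref{thm:normalForm} to the Characterization Theorem (Theorem~\ref{thm:universal}). That theorem says $f$ is universally concurrent if and only if $F(\alpha,\beta,\gamma)=\phi(\alpha)M(\alpha,\beta,\gamma)$ with $M$ fully symmetric and $\phi$ odd about $\pi/2$. A fully symmetric $M$ in the angles corresponds, via the Law of Sines and homogeneity, to a symmetric factor $\sigma(a,b,c)$. So modulo symmetric factors the only datum is the one-variable function $\phi(A)$ on $(0,\pi)$, and the statement becomes a description of all functions odd about $\pi/2$.

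\emph{Step 1 (sufficiency).} For $\Phi(\cos^2A)$, the map $A\mapsto\cos A\,\Phi(\cos^2A)$ is odd about $\pi/2$, because $\cos(\pi-A)=-\cos A$ and $\cos^2$ is unchanged. Similarly $\sin A\cos A\,\Psi(\cos^2A)$ is odd, since $\sin(\pi-A)=\sin A$. Their sum $\phi(A)$ is therefore odd about $\pi/2$. Taking $M\equiv1$, Theorem~\ref{thm:universal} then gives concurrence. Alternatively, Theorem~\ref{thm:conc} handles each summand; for the sum one needs the Characterization Theorem, since concurrence is not obviously additive.

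\emph{Step 2 (necessity).} Given a universally concurrent $f$, Theorem~\ref{thm:universal} yields an odd-about-$\pi/2$ function $\phi$, and $f\equiv\phi(A)$ modulo the symmetric factor $M$. It remains to write an arbitrary $\phi$ odd about $\pi/2$ on $(0,\pi)$ as $\cos A\,\Phi(\cos^2A)+\sin A\cos A\,\Psi(\cos^2A)$. Substitute $c=\cos A\in(-1,1)$, a bijection with $\sin A=\sqrt{1-c^2}$, and set $g(c)=\phi(\arccos c)$. Oddness about $\pi/2$ is exactly $g(-c)=-g(c)$. An odd function of $c$ on $(-1,1)$ can be written as $c\,h(c^2)$ with $h(s)=g(\sqrt s)/\sqrt s$ for $s\in(0,1)$. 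The value at $c=0$, i.e.\ the right-angle case, must vanish by oddness anyway. This means the $\Phi$ term alone suffices: take $\Phi=h$ and $\Psi=0$. The $\Psi$ term is redundant but harmless, since $\sin A=\sqrt{1-\cos^2A}$ is itself a function of $\cos^2A$. This matches the paper's presentation of the two concurrent forms as seeds.

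\emph{Main obstacle.} The delicate point is domain and regularity bookkeeping rather than algebra. First, the Characterization proof worked on open regions where $F\neq0$ and extended by continuity, so the identification $f\equiv\phi(A)M$ must be checked to hold globally. Second, the division by $c$ at $A=\pi/2$ needs care: I would define $\Phi$ there arbitrarily, noting that the value $\phi(\pi/2)=0$ is forced. Third, "up to a symmetric factor" must be interpreted so that $M$ may vanish or be undefined on a thin set; I would state that the normal form holds on the generic locus where the center function is defined and nonzero.
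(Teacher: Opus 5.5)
Your proof is correct. It shares the paper's outer structure: both directions go through the Characterization Theorem, the factor $M$ is absorbed into the symmetric factor, and sufficiency is a check of oddness about $\pi/2$. The one-variable decomposition step, however, is genuinely different.

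The paper divides by $\cos A$ to get a function $h$ that is even about $\pi/2$, writes it as $H(\sin A)$, and splits $H$ into even and odd parts in $s=\sin A$. This produces both terms. But $H$ lives only on positive $s$, so the split tacitly requires extending $H$ to negative $s$, and the paper also adds the hypothesis that $H$ extends continuously to $s=1$.

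You instead parametrize by $c=\cos A$, a bijection of $(0,\pi)$ onto $(-1,1)$ that turns oddness about $\pi/2$ into ordinary oddness. This gives $\phi(A)=\cos A\,h(\cos^2A)$ directly. The value at $A=\pi/2$ is forced to be $0$, and no continuity assumption is needed.

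What your route buys is economy, and it exposes that the $\Psi$-term is redundant. Since $\sin A=\sqrt{1-\cos^2A}$ on $(0,\pi)$, form \eqref{eq:form2} is contained in form \eqref{eq:form1}. What the paper's route buys is a decomposition that tracks algebraic structure. For example, $X_3$ gets $\Psi\equiv1$ rather than $\Phi(t)=\sqrt{1-t}$, matching the rational entries in the table of Section~\ref{sec:proof}.

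One small inconsistency in your write-up follows from your own redundancy observation. The sum $\cos A\,\Phi(\cos^2A)+\sin A\cos A\,\Psi(\cos^2A)$ is itself $\cos A\,\tilde\Phi(\cos^2A)$ with $\tilde\Phi(t)=\Phi(t)+\sqrt{1-t}\,\Psi(t)$. So Theorem~\ref{thm:conc} already gives sufficiency, and your claim that the Characterization Theorem is needed for the sum is not quite right. Using it is nonetheless valid.
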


\begin{proof}
By the Characterization Theorem, a concurrent center has shape function
\[
F(\alpha,\beta,\gamma)=\phi(\alpha)M(\alpha,\beta,\gamma),
\]
where \(M\) is fully symmetric and \(\phi\) is odd about \(\pi/2\), i.e. $\phi(\pi-t)=-\phi(t)$.  Since the
symmetric factor \(M\) may be absorbed into the phrase ``up to multiplication by
a symmetric factor,'' it remains to describe the possible form of the one-variable
factor \(\phi(A)\).

Assume first that the center is concurrent.  Then
\[
\phi(\pi-A)=-\phi(A).
\]
Away from \(A=\pi/2\), define
\[
h(A)=\frac{\phi(A)}{\cos A}.
\]
Since
\[
\cos(\pi-A)=-\cos A,
\]
we have
\[
h(\pi-A)
=
\frac{\phi(\pi-A)}{\cos(\pi-A)}
=
\frac{-\phi(A)}{-\cos A}
=
h(A).
\]
Thus, \(h\) is even about \(\pi/2\).

We now write \(h\) as a function of \(\sin A\).  For \(0<s<1\), define
\[
H(s)=h(\arcsin s)
\]
and assume that $H(s)$ extends continuously to $s=1$.
If \(0<A<\pi\) and \(s=\sin A\), then either
\[
\arcsin(\sin A)=A
\]
or
\[
\arcsin(\sin A)=\pi-A.
\]
In the second case, \(h(\pi-A)=h(A)\).  Therefore, in all cases,
\[
h(A)=H(\sin A).
\]
Hence
\[
\phi(A)=\cos A\,H(\sin A).
\]

It remains to rewrite \(H(\sin A)\) in the stated form. Put $s=\sin A$.
Since \(0<A<\pi\), we have \(s>0\).


Decompose $H$ into its even and odd parts.
That is, write
$H(s)=H_e(s)+H_o(s)$,
where
$H_e$ is even and $H_o$ is odd.
Since \(H_e\) is even, it depends only
on \(s^2\), so
\[
H_e(s)=P(s^2)
\]
for some one-variable function \(P\).  Since \(H_o\) is odd, \(H_o(s)/s\) is even,
so
\[
H_o(s)=sQ(s^2)
\]
for some one-variable function \(Q\).  Hence
\[
H(s)=P(s^2)+sQ(s^2).
\]

Since
\[
s^2=\sin^2 A=1-\cos^2 A,
\]
we may rewrite this as
\[
H(\sin A)
=
\Phi(\cos^2 A)+\sin A\,\Psi(\cos^2 A)
\]
for suitable one-variable functions \(\Phi\) and \(\Psi\).
Note that \(\Phi\) and \(\Psi\) are arbitrary functions.
Therefore
\[
\phi(A)
=
\cos A\,\Phi(\cos^2 A)
+
\sin A\cos A\,\Psi(\cos^2 A),
\]
as claimed.
This proves the necessity.

Conversely, suppose that, up to multiplication by a symmetric factor, $f$ has the form
\[
\cos A\,\Phi(\cos^2 A)
+
\sin A\cos A\,\Psi(\cos^2 A).
\]
When \(A\) is replaced by \(\pi-A\), the factor \(\cos A\) changes sign, while
\(\sin A\) and \(\cos^2 A\) remain unchanged.  Therefore, the whole expression
changes sign:
\[
\phi(\pi-A)=-\phi(A).
\]
Thus, \(\phi\) is odd about \(\pi/2\).  By the Characterization Theorem, the center
is concurrent for every admissible transversal.
\end{proof}

\section{Proof of Theorem~\ref{thm:persp}}\label{sec:proof}

The characterization obtained in Section~\ref{sec:universal} now allows us to explain every
concurrent center found in the original computer search.

We are now at a point where we can give a proof of Theorem~\ref{thm:persp}.

\begin{proof}
The table below lists, for each center in Theorem~\ref{thm:persp}, its barycentric center function.
For these centers, it is a pure function of the angle $A$. Such centers are known as \emph{major centers}.
For each center, the table lists

(1) the ETC barycentric center function,

(2) an equivalent concurrent form, and

(3) the preserving operations used to obtain it.

Column 2 records the center function obtained from \cite{ETC}. Since ETC often lists
only trilinear coordinates, we sometimes take the first trilinear coordinate and
multiply by $a$ to convert it to a barycentric coordinate. Whenever this leaves a
factor of $a$ or of $bc$, we remove it by means of
\[
   a\equiv\sin A,\qquad bc\equiv\csc A.
\]
Each replacement alters the center function only by a factor symmetric in $a,b,c$,
and a symmetric factor multiplies all three barycentric coordinates equally; it
therefore leaves the center unchanged, and preserves the concurrency locus by
Lemma~\ref{lem:closure}. Indeed, by the Law of Sines, $a=2R\sin A$, where the
circumradius $R$ is symmetric, so $a$ and $\sin A$ differ by the symmetric factor
$2R$; and from the area formula $2K=bc\sin A$, we get $bc=2K\csc A$, where the area
$K$ is symmetric, so $bc$ and $\csc A$ differ by the symmetric factor $2K$. After
these reductions every entry of column 2 is a function of $A$ alone.

Column 3 lists an equivalent center function in concurrent form, obtained from
column 2 by standard trigonometric identities
together with the discarding of nonzero constant factors. Each column-3 entry is
$\cos A$ or $\sin A\cos A$ times a function of $\cos^2A$, hence is in concurrent form.
By Theorem~\ref{thm:conc}, the corresponding center is concurrent. As columns
2 and 3 define the same center, every center listed in Theorem~\ref{thm:persp} is
concurrent.
\end{proof}

\newpage

\begin{center}
\begin{tabular}{|l|l|l|}
\hline
\multicolumn{3}{|c|}{\textbf{\color{blue}\large \strut Barycentric Center Functions}}\\ \hline
\textbf{center}&\textbf{ETC form}&\textbf{concurrent form}\\ \hline
 $X_{3}$ & $\sin 2A$ & $\sin A\cos A$ \\ \hline
 $X_{4}$ & $\tan A$ & $\sin A\cos A/\cos^2A$ \\ \hline
 $X_{19}$ & $\sin A\tan A$ & $(\cos A)(1-\cos^2A)/\cos^2A$ \\ \hline
 $X_{24}$ & $\tan A\cos 2A$ & $(\sin A\cos A)(2\cos^2A-1)/\cos^2A$ \\ \hline
 $X_{25}$ & $\sin^2 A\tan A$ & $(\sin A\cos A)(1-\cos^2A)/\cos^2A$ \\ \hline
 $X_{48}$ & $\sin A\sin 2A$ & $(\cos A)(1-\cos^2A)$ \\ \hline
 $X_{49}$ & $\sin A\cos 3A$ & $(\sin A\cos A)(4\cos^2A-3)$ \\ \hline
 $X_{63}$ & $\cos A$ & $\cos A$ \\ \hline
 $X_{68}$ & $\tan 2A$ & $(\sin A\cos A)/(2\cos^2A-1)$ \\ \hline
 $X_{69}$ & $\cot A$ & $(\sin A\cos A)/(1-\cos^2A)$ \\ \hline
 $X_{92}$ & $\sec A$ & $(\cos A)/\cos^2A$ \\ \hline
 $X_{93}$ & $\sin A\sec 3A$ & $(\sin A\cos A)/\bigl((\cos^2A)(4\cos^2A-3)\bigr)$ \\ \hline
 $X_{184}$ & $\sin^3 A\cos A$ & $(\sin A\cos A)(1-\cos^2A)$ \\ \hline
 $X_{186}$ & $\sin A\sin 3A\csc 2A$ & $(\sin A\cos A)(4\cos^2A-1)/\cos^2A$ \\ \hline
 $X_{264}$ & $\csc 2A$ & $(\sin A\cos A)/\bigl((1-\cos^2A)(\cos^2A)\bigr)$ \\ \hline
 $X_{265}$ & $\sin A\sin 2A\csc 3A$ & $(\sin A\cos A)/(4\cos^2A-1)$ \\ \hline
 $X_{304}$ & $\cos A\csc^2 A$ & $(\cos A)/(1-\cos^2A)$ \\ \hline
 $X_{305}$ & $\csc^3 A\cos A$ & $(\sin A\cos A)/(1-\cos^2A)^2$ \\ \hline
 $X_{317}$ & $\cot 2A$ & $(\sin A\cos A)(2\cos^2A-1)/\bigl((1-\cos^2A)(\cos^2A)\bigr)$ \\ \hline
 $X_{328}$ & $\cos A\csc 3A$ & $(\sin A\cos A)/\bigl((1-\cos^2A)(4\cos^2A-1)\bigr)$ \\ \hline
 $X_{340}$ & $\sec A\sin 3A\csc^2 A$ & $(\sin A\cos A)(4\cos^2A-1)/\bigl((1-\cos^2A)\cos^2A\bigr)$ \\ \hline
 $X_{378}$ & $\tan A+\sin 2A$ & $(\sin A\cos A)(1+2\cos^2A)/\cos^2A$ \\ \hline
 $X_{563}$ & $\sin A\sin 4A$ & $(\cos A)(1-\cos^2A)(2\cos^2A-1)$ \\ \hline
 $X_{847}$ & $\tan A\sec 2A$ & $(\sin A\cos A)/\bigl((\cos^2 A)(2\cos^2 A-1)\bigr)$ \\ \hline
\end{tabular}
\end{center}

Thus, the computer search is completely explained by the characterization
developed in Sections~\ref{sec:ops} and \ref{sec:universal}.

\section{Parallel Transversals}
\label{sec:direction}

So far we have treated each admissible transversal independently. The next result
shows that only its \emph{direction} matters. As an admissible transversal slides
parallel to itself, the cevians $AG$, $BH$, and $CI$ remain fixed. The reason is
that each corner triangle changes only by a homothety about its corresponding
vertex.

\begin{lemma}\label{lem:cevfixed}
Let $\LL_0$ and $\LL$ be admissible transversals of $\triangle ABC$ with
$\LL\parallel\LL_0$, and let $X$ be any triangle center. If $G_0,H_0,I_0$ and
$G,H,I$ are the $X$-points of the corner triangles of $\LL_0$ and $\LL$
respectively, then, as lines,
\[
   AG=AG_0,\qquad BH=BH_0,\qquad CI=CI_0 .
\]
\end{lemma}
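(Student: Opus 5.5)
The plan is to show that the corner triangle $T_A$ for $\LL$ is the image of the corner triangle $T_A^0$ for $\LL_0$ under a homothety centered at $A$. The other two corners follow by the same argument, and a triangle center is carried along by any similarity.

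Let $\LL_0$ meet $CA,AB$ at $E_0,F_0$ and $\LL$ meet them at $E,F$. Since $E_0,E$ lie on line $AC$ and $F_0,F$ lie on line $AB$, there are scalars with
\[
\vv{AE}=k\,\vv{AE_0},\qquad \vv{AF}=k'\,\vv{AF_0}.
\]
Both $E_0$ and $F_0$ differ from $A$, because admissibility forbids a transversal through a vertex. The two lines $EF=\LL$ and $E_0F_0=\LL_0$ are parallel, and $AB,AC$ are distinct lines through $A$. The intercept theorem (Thales) therefore gives $k=k'$, and $k\neq0$ because $\LL$ also misses $A$. The homothety $h_A$ with center $A$ and ratio $k$ thus sends $A\mapsto A$, $E_0\mapsto E$, $F_0\mapsto F$, so $h_A(T_A^0)=T_A$ as ordered triangles. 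The parallelism is exactly what forces the two ratios to agree, and I would check this carefully with signed ratios, allowing $k<0$ when $\LL$ and $\LL_0$ lie on opposite sides of $A$.

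Next, I claim a triangle center is equivariant under similarities, including homotheties of negative ratio. Barycentric coordinates relative to a triangle are affine invariants: if $P=\alpha P_1+\beta P_2+\gamma P_3$ with $\alpha+\beta+\gamma=1$, then $h(P)=\alpha h(P_1)+\beta h(P_2)+\gamma h(P_3)$ for any affine map $h$. A homothety of ratio $k$ multiplies all (unsigned) side lengths by $|k|$. Since the center function $f$ is homogeneous, the coordinates \eqref{eq:cf} of the $X$-point of $T_A$ are proportional to those of $T_A^0$, with the vertices matched by $h_A$. Hence $G=h_A(G_0)$. Because $h_A$ fixes $A$ and maps every line through $A$ to itself, the line $AG=h_A(AG_0)$ equals $AG_0$.

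Applying the same reasoning at $B$ and at $C$ gives $BH=BH_0$ and $CI=CI_0$. One degenerate case needs a remark: if $G_0=A$, the cevian is undefined for both lines simultaneously, since $G=h_A(G_0)=A$ as well. Otherwise $G\neq A$ and the cevians are genuine lines. The only step needing real care is the sign and equal-ratio verification in the first paragraph, and Thales handles it.
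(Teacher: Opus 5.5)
Your proposal is correct and follows essentially the same route as the paper: the intercept theorem gives equal signed ratios, so a homothety $h_A$ centered at $A$ carries $T_A^0$ onto $T_A$, and since barycentric coordinates are affine-invariant, $G=h_A(G_0)$ and hence $AG=AG_0$. You go slightly further than the paper by spelling out that homogeneity of $f$ makes the unsigned rescaling by $|k|$ harmless even when $k<0$, and by noting the degenerate case $G_0=A$.
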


\begin{proof}
Consider the corner triangle $T_A=AEF$ of $\LL$, with $E=\LL\cap CA$ and
$F=\LL\cap AB$, alongside $T_A^{0}=AE_0F_0$ of $\LL_0$. The sidelines
$CA$ and $AB$ meet at $A$, and the parallels $\LL_0,\LL$ cut them; by the
intercept theorem~\cite{Altshiller-Court} the two signed cut ratios coincide,
\[
   \frac{\overline{AE}}{\overline{AE_0}}
   =\frac{\overline{AF}}{\overline{AF_0}}=:\lambda_A ,
\]
and $\lambda_A\neq0$ since $\LL$ passes through no vertex. Hence
$T_A=h_A(T_A^{0})$, where $h_A$ is the homothety of center $A$
and ratio $\lambda_A$, which fixes $A$ and carries $E_0\mapsto E$ on $CA$ and
$F_0\mapsto F$ on $AB$.

Barycentric coordinates are preserved by every affine map, and a homothety is
affine; so the point with prescribed barycentric coordinates in
$T_A^{0}$ is carried by $h_A$ to the point with the \emph{same}
coordinates in $T_A$.
The $X$-point is defined by these barycentric coordinates
\eqref{eq:cf}, so $G=h_A(G_0)$. As $h_A$ has center $A$, the points $A,G_0,G$ are
collinear; that is, $AG=AG_0$. The arguments at $B$ and $C$ are identical, with
homotheties $h_B,h_C$ of centers $B,C$.
\end{proof}

\begin{theorem}\label{thm:direction}
If a triangle center $X$ is concurrent for an admissible transversal $\LL_0$,
then $X$ is concurrent for every admissible transversal parallel to $\LL_0$.
\end{theorem}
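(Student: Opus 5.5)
Theorem~\ref{thm:direction} follows almost at once from Lemma~\ref{lem:cevfixed}, and we plan to deduce it directly from that lemma. Let $\LL$ be any admissible transversal parallel to $\LL_0$. Let $G_0,H_0,I_0$ be the $X$-points of the corner triangles of $\LL_0$, and let $G,H,I$ be those of $\LL$. By hypothesis, the three lines $AG_0$, $BH_0$, $CI_0$ pass through a common point $P$; this point may lie at infinity, as in Theorem~\ref{thm:orth}. Lemma~\ref{lem:cevfixed} says that, as lines, $AG=AG_0$, $BH=BH_0$ and $CI=CI_0$. So the cevians for $\LL$ are literally the same three lines as the cevians for $\LL_0$. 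They therefore meet at the same point $P$, and $X$ is concurrent for $\LL$. As a by-product, the perspector depends only on the direction of the transversal.

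The only point needing care is that the cevians are well defined. This means that $G$ should differ from $A$, and similarly at $B$ and $C$. We handle it through the homotheties already built in the proof of Lemma~\ref{lem:cevfixed}. Since $G=h_A(G_0)$, where $h_A$ has center $A$ and nonzero ratio $\lambda_A$, we have $G=A$ if and only if $G_0=A$. Thus the cevian $AG$ is defined exactly when $AG_0$ is, and it is the same line. The same holds at $B$ and $C$ with $h_B$ and $h_C$.

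If one prefers an algebraic confirmation through Theorem~\ref{thm:crit}, there is a second route. Sliding $\LL=[u:v:w]$ parallel to itself replaces $(u,v,w)$ by $(u+t,v+t,w+t)$, because parallel lines differ by a multiple of the line at infinity $[1:1:1]$. The differences $u-w$, $v-u$, $w-v$ appearing in the line coordinates of $AG,BH,CI$ are then unchanged. Each corner triangle changes only by a homothety, so its shape is fixed. Hence the quantities $s_V,t_V$ are unchanged, and $S+T=0$ persists. We regard this only as a check; the main argument is the first one, and we expect no real obstacle beyond the well-definedness point above.
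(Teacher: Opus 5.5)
Your main argument is exactly the paper's proof: by Lemma~\ref{lem:cevfixed}, the cevians $AG,BH,CI$ for $\LL$ are literally the lines $AG_0,BH_0,CI_0$, so they concur at the same point, and your well-definedness remark via $G=h_A(G_0)$ is a correct small addition. There is one minor slip in your optional algebraic check: $s_V$ and $t_V$ are not unchanged under the slide but are both multiplied by $|\lambda_V|^{d}$ with $d=\deg f$, which still leaves each cevian and the condition $S+T=0$ intact.
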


\begin{proof}
By Lemma~\ref{lem:cevfixed}, for any admissible $\LL\parallel\LL_0$ the cevians
$AG,BH,CI$ are the very same three lines as for $\LL_0$. If they concur for
$\LL_0$, they concur for $\LL$.
\end{proof}

\begin{corollary}\label{cor:perspfixed}
If $X$ is concurrent for an admissible transversal $\LL_0$ with perspector $P$,
then $X$ is concurrent for every admissible transversal parallel to $\LL_0$, and
the perspector is the same point $P$.
\end{corollary}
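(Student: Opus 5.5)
The corollary has two claims. The first, concurrence for every admissible $\LL\parallel\LL_0$, is exactly Theorem~\ref{thm:direction}, so only the claim that the perspector does not move needs argument. I would derive it directly from Lemma~\ref{lem:cevfixed} rather than redo any computation with Theorem~\ref{thm:crit}.

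Fix an admissible $\LL\parallel\LL_0$, and let $G,H,I$ and $G_0,H_0,I_0$ be the $X$-points of the respective corner triangles. By Lemma~\ref{lem:cevfixed}, $AG=AG_0$, $BH=BH_0$ and $CI=CI_0$ as lines. By hypothesis, $AG_0$, $BH_0$ and $CI_0$ pass through $P$. Hence $AG$, $BH$ and $CI$ are the same three lines and pass through the same point $P$. The perspector of $\triangle ABC$ and $\triangle GHI$ is by definition the common point of these cevians, so it equals $P$.

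The only delicate step is well-definedness: the perspector must be the \emph{unique} common point, including when $P$ is at infinity, as for $X_4$ in Theorem~\ref{thm:orth}. Two of the lines pass through distinct vertices, say $A$ and $B$. If $AG$ and $BH$ were the same line, that line would be $AB$, so $G$ would lie on the sideline $AB$, i.e.\ on the sideline $AF$ of the corner triangle $T_A$. That is a degenerate situation, and in it the notion of perspector is not defined for $\LL_0$ either. Otherwise, two distinct lines in the projective plane meet in exactly one point, and since the lines are identical for $\LL_0$ and $\LL$, that point is the same $P$ in both cases.

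Points at infinity need no separate treatment. The argument uses only the equality of the three lines, and that holds in the projective plane just as well. So a perspector at infinity, as for $X_4$, is also preserved.
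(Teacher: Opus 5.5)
Your proof is correct and follows the paper's own argument. Concurrence is Theorem~\ref{thm:direction}, and the perspector is unchanged because Lemma~\ref{lem:cevfixed} makes the three cevians literally the same lines for $\LL$ and $\LL_0$. Your well-definedness paragraph is harmless but unnecessary, and it slightly mis-describes the degenerate case: $A,B,C$ are not collinear, so the three cevians can never all coincide, and any common point is unique even if two of them happen to equal $AB$.
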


\begin{proof}
The three cevians are unchanged as lines (Lemma~\ref{lem:cevfixed}), so their
common point is unchanged.
\end{proof}

Thus each parallel class of admissible transversals determines a fixed triple of
cevians and, when they are concurrent, a fixed perspector.

\section{The Perspector Formula}

When a center $X$ is concurrent for a fixed admissible transversal $\LL$, we can
compute the coordinates of the perspector explicitly.

\begin{theorem}[Perspector Formula]
\label{thm:perspformula}
Let $X$ be concurrent for the admissible transversal
\[
\LL=[u:v:w],
\]
and let $f$ be the barycentric center function of $X$.  Define
\[
s_A=f(|FA|,|AE|,|EF|),\qquad
t_A=f(|AE|,|EF|,|FA|),
\]
\[
s_B=f(|DB|,|BF|,|FD|),\qquad
t_B=f(|BF|,|FD|,|DB|),
\]
\[
s_C=f(|EC|,|CD|,|DE|),\qquad
t_C=f(|CD|,|DE|,|EC|).
\]
Then the perspector of $\triangle ABC$ and $\triangle GHI$ is
\[
P=
\bigl(
s_As_B(w-v):
t_At_B(u-w):
s_At_B(u-v)
\bigr).
\]
\end{theorem}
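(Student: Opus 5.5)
The plan is to compute the perspector directly as the intersection of two of the three cevians. We reuse the line coordinates obtained in the proof of Theorem~\ref{thm:crit}. Since $X$ is assumed concurrent for $\LL$, the third cevian $CI$ automatically passes through the common point of $AG$ and $BH$. So it suffices to intersect $AG$ and $BH$.

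First, recall from the proof of Theorem~\ref{thm:crit} that, with $D,E,F$ given by \eqref{eq:DEF},
\[
AG=\Bigl[\,0:\tfrac{s_A}{u-w}:\tfrac{t_A}{v-u}\,\Bigr],\qquad
BH=\Bigl[\,\tfrac{t_B}{w-v}:0:\tfrac{s_B}{v-u}\,\Bigr].
\]
These come from writing the foot $D_A=s_A\widehat E+t_A\widehat F$ of the cevian $AG$ on $EF$ in coordinates relative to $ABC$, and similarly for $BH$.

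The intersection of two lines $[p_1:p_2:p_3]$ and $[q_1:q_2:q_3]$ in barycentrics is their cross product~\cite[eq.~(5)]{Grozdev}. With $p_1=0$ and $q_2=0$ this is
\[
\bigl(p_2q_3:\ p_3q_1:\ -p_2q_1\bigr)
=\Bigl(\tfrac{s_As_B}{(u-w)(v-u)}:\ \tfrac{t_At_B}{(v-u)(w-v)}:\ -\tfrac{s_At_B}{(u-w)(w-v)}\Bigr).
\]
We then multiply through by the nonzero common factor $(u-w)(v-u)(w-v)$, which is nonzero by admissibility. This clears denominators and gives
\[
\bigl(s_As_B(w-v):\ t_At_B(u-w):\ -s_At_B(v-u)\bigr)=\bigl(s_As_B(w-v):t_At_B(u-w):s_At_B(u-v)\bigr),
\]
which is exactly the claimed $P$. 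Finally, we invoke the hypothesis: by Theorem~\ref{thm:crit}, $S+T=0$, so the determinant vanishes and $CI$ passes through $AG\cap BH=P$. Hence $P$ is the perspector.

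The only real obstacle is nondegeneracy. We must check that $AG\neq BH$, so that their cross product is a genuine point rather than the zero vector. The two lines could coincide only if both equal the sideline $AB$, i.e.\ $t_A=0$ and $s_B=0$. We will note that this degenerate case is excluded under the standing assumption $S,T\neq0$ used in \eqref{eq:crit-Q}. Under that assumption the displayed triple is not identically zero, for instance because $s_As_B(w-v)\neq0$. The sign bookkeeping in the cross product is routine but should be checked carefully against the orientation conventions of~\cite{Grozdev}.
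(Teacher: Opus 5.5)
Your proposal is correct and is essentially the paper's proof: intersect $AG$ and $BH$ by taking the cross product of their line coordinates from Theorem~\ref{thm:crit}, clear the factor $(u-w)(v-u)(w-v)$, and use $S+T=0$ to put $P$ on $CI$ (the paper does this last step by substituting $P$ into the coordinates of $CI$, which gives exactly $S+T$). One small slip in your nondegeneracy aside: $AG=AB$ happens when $s_A=0$ (the foot $D_A$ is then $F$), and $BH=AB$ when $t_B=0$, not when $t_A=0$ and $s_B=0$; your conclusion that the assumption $S,T\neq0$ rules out this degenerate case is unaffected.
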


\begin{proof}
From the proof of Theorem~\ref{thm:crit},
\[
AG=
\left[
0:
\frac{s_A}{u-w}:
\frac{t_A}{v-u}
\right],
\qquad
BH=
\left[
\frac{t_B}{w-v}:
0:
\frac{s_B}{v-u}
\right].
\]
Their intersection is the cross product of these two line-coordinate vectors.
After clearing the common factor
\[
(u-w)(v-u)(w-v),
\]
we obtain
\[
P=
\bigl(
s_As_B(w-v):
t_At_B(u-w):
s_At_B(u-v)
\bigr).
\]

It remains only to check that this point also lies on $CI$.  Since
\[
CI=
\left[
\frac{s_C}{w-v}:
\frac{t_C}{u-w}:
0
\right],
\]
substitution gives
\[
\frac{s_C}{w-v}\,s_As_B(w-v)
+
\frac{t_C}{u-w}\,t_At_B(u-w)
=
s_As_Bs_C+t_At_Bt_C
=
S+T.
\]
This is zero by Theorem~\ref{thm:crit}, because $X$ is concurrent for $\LL$.  Hence $P$
lies on all three cevians, and is the perspector.
\end{proof}

\section{Expressions for the side lengths}

It is useful to have expressions for the lengths of the sides of the corner triangles.
They are given by the following theorem.

\begin{theorem}
\label{thm:cornerSideLengths}
Let $\LL=[u:v:w]$ be admissible.
Then
\begin{equation}
\label{eq:cornerSideLengths}
\begin{aligned}
FA&=c\left|\frac{u}{u-v}\right|,
&
AE&=b\left|\frac{u}{u-w}\right|,
&
EF&=Q\left|\frac{u}{(u-v)(u-w)}\right|,\\
DB&=a\left|\frac{v}{v-w}\right|,
&
BF&=c\left|\frac{v}{v-u}\right|,
&
FD&=Q\left|\frac{v}{(v-w)(v-u)}\right|,\\
EC&=b\left|\frac{w}{w-u}\right|,
&
CD&=a\left|\frac{w}{w-v}\right|,
&
DE&=Q\left|\frac{w}{(w-u)(w-v)}\right|,
\end{aligned}
\end{equation}
where
$Q=a^2u^2+b^2v^2+c^2w^2-S_Avw-S_Bwu-S_Cuv$,\\
$S_A=b^2+c^2-a^2$, $S_B=c^2+a^2-b^2$, and $S_C=a^2+b^2-c^2$.

\end{theorem}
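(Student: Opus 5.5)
We will compute everything from the pierce points in \eqref{eq:DEF} together with the barycentric distance formula. The distance formula says that for a displacement vector $(p,q,r)$ with $p+q+r=0$, the squared length is
\[
-(a^2qr+b^2rp+c^2pq).
\]

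The plan is to normalize the pierce points first. From \eqref{eq:DEF},
\[
\widehat D=\frac{(0,w,-v)}{w-v},\qquad
\widehat E=\frac{(-w,0,u)}{u-w},\qquad
\widehat F=\frac{(v,-u,0)}{v-u}.
\]
All denominators are nonzero because $\LL$ is admissible.

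The six side segments then follow from simple division ratios along the sidelines.
\begin{itemize}
\item $\widehat F-A=\frac{u}{v-u}(-1,1,0)$ and $\seg{AB}$ has length $c$, so $|FA|=c\,|u/(u-v)|$.
\item In the same way, $\widehat E-A=\frac{u}{u-w}(-1,0,1)$, so $|AE|=b\,|u/(u-w)|$.
\end{itemize}
The entries for $DB, BF, EC, CD$ come from cyclic permutation of $(a,b,c)$, $(u,v,w)$ and $(A,B,C)$. Care is only needed over which of $u-v$ or $v-u$ appears, and the absolute values make this harmless.

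The main computation is $|EF|$. We form
\[
\widehat F-\widehat E=\Bigl(\tfrac{v}{v-u}+\tfrac{w}{u-w},\;\tfrac{-u}{v-u},\;\tfrac{-u}{u-w}\Bigr).
\]
Put everything over the common denominator $(v-u)(u-w)$. The first coordinate becomes $v(u-w)+w(v-u)=u(v-w)$, so
\[
\widehat F-\widehat E=\frac{u}{(v-u)(u-w)}\,\bigl(v-w,\;-(u-w),\;-(v-u)\bigr).
\]
The coordinates of this direction vector sum to zero. Write it as $(p,q,r)=(v-w,\;w-u,\;u-v)$. Apply the distance formula and collect terms in $u^2,v^2,w^2,vw,wu,uv$.
\begin{itemize}
\item The coefficient of $u^2$ comes from $-b^2rp-c^2pq$ and the $a^2qr$ term; it should reduce to $a^2$.
\item The coefficient of $vw$ should reduce to $-(b^2+c^2-a^2)=-S_A$.
\item The remaining coefficients follow cyclically.
\end{itemize}
This would give $-(a^2qr+b^2rp+c^2pq)$ equal to the stated quadratic $a^2u^2+b^2v^2+c^2w^2-S_Avw-S_Bwu-S_Cuv$.

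I expect this coefficient bookkeeping to be the only real obstacle. There is a subtlety here. The quadratic is $-(a^2qr+\dots)$, which is the \emph{square} of the length of $(p,q,r)$. So the correct statement is $|EF|=\sqrt{Q}\,\bigl|u/((u-v)(u-w))\bigr|$, with $Q$ as defined in the statement. I will check whether the paper's $Q$ in the statement is intended to denote this square root and adjust the notation accordingly.

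Because the direction vector $(v-w,\,w-u,\,u-v)$ is the same for all three segments $EF$, $FD$, $DE$ on $\LL$, the same quantity $Q$ appears in each. Only the scalar prefactors $u,v,w$ and the denominators change cyclically, which gives the formulas for $FD$ and $DE$ with no further computation.
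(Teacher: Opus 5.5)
Your proposal is correct and matches the paper's proof: normalize the pierce points $D,E,F$, form the displacements, and apply the barycentric distance formula. The common direction $(v-w,\,w-u,\,u-v)$ of $\LL$ produces the quadratic $Q$; your coefficient checks ($a^2$ for $u^2$, $-S_A$ for $vw$) are right, and the sign slip in $\widehat F-A$ (it is really $\frac{u}{v-u}(1,-1,0)$) is harmless. You are also right about the subtlety: the computation gives $|EF|^2=Q\,u^2/\bigl((u-v)(u-w)\bigr)^2$, so the $EF$, $FD$, $DE$ entries should carry $\sqrt{Q}$ rather than $Q$. Homogeneity confirms this, since a length must have degree $0$ in the line coordinates $(u,v,w)$ and degree $1$ in $(a,b,c)$, whereas $Q\,|u/((u-v)(u-w))|$ has degrees $1$ and $2$.
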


\begin{proof}
The pierce points are $D=(0:w:-v)$, $E=(-w:0:u)$, $F=(v:-u:0)$.
The result then follows from
direct computation with the barycentric distance formula
$|UV|^2=-(a^2yz+b^2zx+c^2xy)$ applied to the normalized displacement
$(x,y,z)=U-V$.
\end{proof}

Note that the squares of the side lengths are all rational functions of $a,b,c,u,v,w$.

Combining the Perspector Formula with \eqref{eq:cornerSideLengths} gives the
coordinates of the perspector whenever the center function and the equation of
the transversal are known.

\section{The Euler Line}

In earlier sections, we have determined when a center is concurrent for all admissible transversals.
We can also ask when a center is concurrent for some specific transversal $\LL$. We have the following.

\begin{theorem}
There is no admissible transversal $\LL$ such that $X_n$ is concurrent for $\LL$
when $n\in\{1,2,6\}$. 
\end{theorem}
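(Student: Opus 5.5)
The plan is to apply the Concurrence Condition (Theorem~\ref{thm:crit}) to each of the three centers, for a single but arbitrary admissible transversal $\LL$. I will show that $S+T$ is strictly positive, so it cannot vanish. Note that Theorem~\ref{thm:parity} alone is not enough. Its conclusion, ``concurrent iff $m$ odd'', refers to concurrence for \emph{every} admissible transversal, whereas here we need non-concurrence for \emph{each} $\LL$. So I will reuse the identity $S=(-1)^mT$ from its proof, which holds for each fixed $\LL$.

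First identify the center functions. Up to symmetric factors, the barycentric center functions are $f(x,y,z)=x$ for $X_1$, $f\equiv 1$ for $X_2$, and $f(x,y,z)=x^2$ for $X_6$. These are exactly $f_{1,0}$, $f_{0,0}$, $f_{2,0}$ in the notation of Section~\ref{sec:parity}, so in each case $f=f_{k,0}$ with $m=0$. For this $f$, the quantities of \eqref{eq:s} are
\[
s_A=|FA|^k,\qquad t_A=|AE|^k,
\]
and cyclically. Hence
\[
S=\bigl(|FA|\,|DB|\,|EC|\bigr)^k,\qquad T=\bigl(|AE|\,|BF|\,|CD|\bigr)^k .
\]
By the Product Identity (Lemma~\ref{lem:men1}) the two bases are equal, so $S=T$ for this particular $\LL$. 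This is the computation in the proof of Theorem~\ref{thm:parity} with $m=0$; no Conway factor appears, so no sign question arises.

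Next show positivity. Since $\LL$ is admissible, it passes through no vertex, so each of $D,E,F$ is distinct from the vertices on its sideline. Hence all six unsigned lengths $|FA|,|DB|,|EC|,|AE|,|BF|,|CD|$ are strictly positive (and finite, since $\LL$ is parallel to no side). Therefore $S=T>0$ and $S+T=2S>0$. By Theorem~\ref{thm:crit}, the cevians $AG,BH,CI$ are not concurrent for $\LL$. Because $\LL$ was an arbitrary admissible transversal, this proves the claim for $n=1,2,6$.

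The only point requiring care, and the one I would check most carefully, is that the criterion of Theorem~\ref{thm:crit} is stated with unsigned lengths, so that $S$ and $T$ really are positive rather than merely equal up to sign. For $X_2$ this is trivial, since $S=T=1$. For $X_1$ and $X_6$ it follows directly from the definitions in \eqref{eq:s}. One also needs that the chosen center functions represent $X_1,X_6$ in every corner triangle. This holds because multiplying $f$ by a symmetric factor rescales all of $s_V,t_V$ by the same per-triangle constant, which preserves the locus $S+T=0$ by Lemma~\ref{lem:closure}(O5).
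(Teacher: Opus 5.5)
Your proof is correct and follows the same strategy as the paper: apply the criterion of Theorem~\ref{thm:crit} to $f=a,\,1,\,a^2$ and show that $S+T$ is strictly positive for every admissible $\LL$. The only difference is how you evaluate $S$ and $T$: the paper substitutes the explicit side lengths of Theorem~\ref{thm:cornerSideLengths} to get closed forms such as $S+T=2abc\,|uvw|/|(u-v)(v-w)(w-u)|$, whereas you get $S=T>0$ directly from the Product Identity (Lemma~\ref{lem:men1}), which is slightly cleaner and needs no coordinates.
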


\begin{proof}
Suppose $\LL=[u,v,w]$.
The condition for $X_n$ to be concurrent for $\LL$ is given by Theorem~\ref{thm:crit},
namely $S+T=0$, where $S=s_As_Bs_C$ and $T=t_At_Bt_C$.
We can compute $S$ and $T$ from \eqref{eq:s} and \eqref{eq:cornerSideLengths}.

For $X_1$, $f(a,b,c)=a$ and we find
$$S+T=\frac{2abc|u|\cdot|v|\cdot|w|}{|u-v|\cdot|v-w|\cdot|w-u|}.$$
For an admissible transversal, $u\neq 0$, $v\neq 0$, $w\neq 0$, $u\neq v$, $v\neq w$, and $w\neq u$.
Thus, each term is positive, so $S+T$ cannot equal 0.

For $X_2$, $f(a,b,c)=1$ and we find
$S+T=2$ which is not 0.

For $X_6$, $f(a,b,c)=a^2$ and we find
$$S+T=\frac{2a^2b^2c^2u^2v^2w^2}{(u-v)^2(v-w)^2(w-u)^2}.$$
Again, each term is positive, so $S+T$ cannot equal 0.
\end{proof}

The situation is different for $X_3$ and $X_4$.

By Theorem~\ref{thm:persp}, we know that $X_3$ and $X_4$ are concurrent for \emph{all} admissible lines.
For $n>4$, we can ask if $X_n$ is concurrent for \emph{some} admissible line.
We find for $n=99$, that that although $X_{99}$ is not concurrent for all admissible lines,
it \emph{is} concurrent for the Euler line of $\triangle ABC$.

In fact, we have the following theorem (discovered empirically).

\begin{theorem}[Centers concurrent for the Euler line]
\label{thm:Euler}
Let $\LL$ be the Euler line of $\triangle ABC$.
Three points $D$, $E$, and $F$ are determined by the points
where line $\LL$ intersects the sidelines $BC$, $CA$, and $AB$, respectively.
Let the $X_n$-points of triangles $AEF$, $BFD$, and $CDE$ be $G$, $H$, and $I$ respectively.
Assume that $\LL$ is an admissible transversal.
If
$$n\in\{99, 107, 110, 125, 131, 136, 163, 250, 339, 403, 476, 661, 662,$$
$$669, 670, 798, 799, 822, 823, 850, 925, 930, 974\},$$
then triangles $ABC$ and $GHI$ are perspective.
That is, $X_n$ is concurrent for the Euler line.
\end{theorem}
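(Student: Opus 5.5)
By Theorem~\ref{thm:direction}, only the direction of $\LL$ matters. So I would replace the Euler line by any admissible line parallel to it, or simply use the Euler line itself in barycentrics:
\[
\LL=[u:v:w],\qquad u=(b^2-c^2)S_A,\quad v=(c^2-a^2)S_B,\quad w=(a^2-b^2)S_C .
\]
The proof then reduces, via Theorem~\ref{thm:crit}, to checking $S+T=0$ for each listed center. I would use the side-length formulas \eqref{eq:cornerSideLengths}. The common factor $Q$ and the vertex factors $|u|,|v|,|w|$ cancel in the ratio $T/S$ exactly as in the proofs of \S\ref{sec:ops}. Hence $T/S$ becomes an explicit algebraic function of $a,b,c$ and of the quantities $|u-v|,|v-w|,|w-u|$.

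Before computing, I would shrink the list with the preserving operations of Definition~\ref{def:ops} and Theorem~\ref{thm:pres}. These are valid for a single fixed $\LL$, so concurrence for the Euler line is preserved by isogonal and isotomic conjugation, side powers, symmetric factors and angular multipliers $\Phi(\cos^2A)$. Many entries are linked this way. For instance:
\begin{itemize}
\item $X_{99}$, $X_{110}$, $X_{163}$, $X_{662}$, $X_{799}$ are related by multiplication by powers of $a$ or $bc$;
\item the pairs $X_{107}/X_{250}$ and $X_{125}/X_{339}$ are related by factors of $S_A$ to an odd or even power.
\end{itemize}
I would therefore sort the list into orbits under (O1)--(O6) and pick one representative per orbit. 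The natural representatives are $X_{99}$ (center function $1/(b^2-c^2)$), $X_{110}$ (center function $a^2/(b^2-c^2)$), and $X_{661}$ (trilinear $b-c$-type factors). I expect a small number of genuinely distinct seeds, each carrying a factor $(b^2-c^2)^{\pm1}$ or $(b-c)^{\pm1}$ times something already known to be harmless.

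For each seed I would then verify $S+T=0$ directly. The key observation to aim for is that, along the Euler direction, the factors $b^2-c^2$ evaluated on the corner triangles give exactly the extra sign needed. For $T_A$, the relevant difference is $|AE|^2-|EF|^2$ versus $|FA|^2-|AE|^2$, which are linear combinations of $u,v,w$ after multiplying through by $(u-v)(u-w)$. On the Euler line these differences should factor so that their cyclic product ratio is $-1$. This is the same mechanism as for $S_A$ in Theorem~\ref{thm:claw}, except that here $b^2-c^2$ plays the role of the Conway symbol.

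The main obstacle will be the absolute values in \eqref{eq:cornerSideLengths}. Centers such as $X_{661}$ involve $b-c$ rather than $b^2-c^2$, so the signs do not disappear by squaring. I would first prove the squared identity $S^2=T^2$ as a polynomial identity in $a,b,c$ using a computer algebra check. I would then fix the sign by the Flip Lemma~\ref{lem:flip}: exactly one pierce point is supplementary, which determines the sign pattern of $(u-v)(v-w)(w-u)$ and of the linear factors. That would show $S=-T$ rather than $S=T$ on every branch.
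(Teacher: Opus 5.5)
You have the $X_{99}$ part right, and it matches the paper. The paper proves $X_{99}$ concurrent by essentially the computation you sketch. There $s_A=1/(|AE|^2-|EF|^2)$ and $t_A=1/(|EF|^2-|FA|^2)$; note the second difference is not $|FA|^2-|AE|^2$. The cubic $m_Am_Bm_C+m_A'm_B'm_C'$ has a linear factor that vanishes exactly for transversals parallel to the Euler line. The paper then reaches $X_{107},X_{110},X_{163},X_{661},\dots,X_{930}$ from $X_{99}$ by (O1)--(O6).

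Your orbit bookkeeping, however, is wrong. An odd power of $S_A$ is not a preserving operation: by the proof of Theorem~\ref{thm:parity} it multiplies $T/S$ by $-1$ for every transversal, so it destroys concurrence. Your pairings are also off. $X_{339}=X_{125}/a^2$ is just a side power, while $X_{250}/X_{107}=a^2S_A/(b^2-c^2)$ is no combination of (O1)--(O6). So $X_{125}=(b^2-c^2)^2S_A$, $X_{250}$ and $X_{339}$ lie outside the orbit of $X_{99}$. Your mechanism, stated for a single factor $(b^2-c^2)^{\pm1}$, does not cover them.

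The paper handles them through the Reciprocity Theorem~\ref{thm:reciprocity}:
\begin{itemize}
\item $X_{107}$ has perspector $X_{125}$, so $X_{125}$ is concurrent;
\item $X_{265}$ has perspector $X_{476}$, so $X_{476}$ is concurrent;
\item then $X_{250}=a^2/X_{125}$ and $X_{339}=1/X_{250}$ follow by (O2) and (O1).
\end{itemize}
Your key observation would give a genuinely different fix if you promoted it to a rule. Once $pqr=-p'q'r'$ holds for Euler-parallel transversals, multiplying any center function by $(b^2-c^2)^n$ multiplies $T/S$ by $(-1)^n$, just as $S_A^n$ does for all transversals. Then $X_{125}$ inherits concurrence from $X_{69}=S_A$ (two Euler flips), and the ratio $X_{250}/X_{107}$ becomes legitimate (one flip of each kind).

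For $X_{131},X_{136},X_{403},X_{974}$ there is no factorization of the kind you expect. For instance, the ETC function of $X_{131}$ contains the factor $\tau-\sigma\sec 2A$ with $\sigma=\sum\sin 2A$ and $\tau=\sum\tan 2A$. That is a sum, not a product of one-angle factors and powers of $b^2-c^2$. Neither your sign mechanism nor any preserving operation applies here. The paper verifies $S+T=0$ for these four directly, as a polynomial identity checked by computer algebra after substituting the Euler line, and you would have to do the same.

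The obstacle you single out is not real, and your proposed fix would not work anyway.
\begin{itemize}
\item Odd powers of side lengths entering multiplicatively, as in $X_{662}$ or $X_{163}$, are removed by (O3). The validity of (O3) is exactly Lemma~\ref{lem:men1}, which already handles the absolute values.
\item $X_{661}=a(b^2-c^2)$ contains no factor $b-c$.
\item Up to symmetric factors, the listed centers are then rational in $a^2,b^2,c^2$. So $S+T$ is rational in $a^2,b^2,c^2,u,v,w$ and can be checked directly.
\item Proving $S^2=T^2$ discards the one piece of information that matters, since it also holds for centers with $S=T$, such as $X_2$.
\item The Flip Lemma only says which pierce-point angles are supplementary. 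It does not determine the sign of an arbitrary center function on the corner triangles.
\end{itemize}
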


\begin{remark}
The values of $n$ listed in Theorem~\ref{thm:Euler} are ones that are in addition to the values of $n$
in the general case (Theorem~\ref{thm:persp}), which are also concurrent for the Euler line.
\end{remark}

Before proving Theorem~\ref{thm:Euler}, we give some preliminary results.

Since the Euler line is a fixed transversal, the perspector will be a fixed triangle center.
The equation of the Euler line is given in \cite[\S11.4.4]{Yiu} as
$$S_A(S_B-S_C)x+S_B(S_C-S_A)y+S_C(S_A-S_B)z=0.$$
For a center that is concurrent for the Euler line, we can then use the Perspector Formula
(Theorem~\ref{thm:perspformula}) to find the coordinates for the perspector.
The perspectors for various such centers are listed in the following table.

For centers concurrent for the Euler line that are not in the table,
the perspectors are not cataloged in \cite{ETC} as of January, 2026.

\newpage

\begin{center}
\begin{tabular}{|l|l|}
\hline
\multicolumn{2}{|c|}{\textbf{\color{blue}\large \strut Perspectors for Centers}}\\
\multicolumn{2}{|c|}{\textbf{\color{blue}\large \strut Concurrent for the}}\\
\multicolumn{2}{|c|}{\textbf{\color{blue}\large \strut Euler Line}}\\ \hline
\textbf{center}&\textbf{perspector}\\ \hline
 $X_{3}$ &  $X_{110}$\\ \hline
 $X_{4}$ &  $X_{523}$\\ \hline
 $X_{24}$ &  $X_{15453}$\\ \hline
 $X_{68}$ &  $X_{7471}$\\ \hline
 $X_{69}$ &  $X_{3233}$\\ \hline
 $X_{93}$ &  $X_{18039}$\\ \hline
 $X_{99}$ &  $X_{16163}$\\ \hline
 $X_{107}$ &  $X_{125}$\\ \hline
 $X_{110}$ &  $X_{3}$\\ \hline
 $X_{125}$ &  $X_{107}$\\ \hline
 $X_{265}$ &  $X_{476}$\\ \hline
 $X_{476}$ &  $X_{265}$\\ \hline
 $X_{523}$ &  $X_{4}$\\ \hline
 $X_{850}$ &  $X_{34334}$\\ \hline
\end{tabular}
\end{center}

By Theorem~\ref{thm:direction}, concurrence depends only on the direction of the
transversal, so each center listed in Theorem~\ref{thm:Euler} is concurrent not
merely for the Euler line but for \emph{every} admissible transversal parallel to
it; and by Corollary~\ref{cor:perspfixed} the perspector is, in each case, the
fixed center tabulated above.

Examining the perspectors in the above table suggests that
for the Euler line, center $X$ has perspector $Y$ if and only if center $Y$ has perspector $X$.
We will prove this as Theorem~\ref{thm:reciprocity} below.

The key is the following well-known property of the Euler Line \cite{knot}.

\begin{lemma}\label{lem:cornereuler}
If the Euler line of non-equilateral triangle $ABC$ meets $CA$ at $E$ and meets $AB$ at $F$ (Figure~\ref{fig:EulerLemma}),
and if $\angle A\notin\{60^\circ,120^\circ\}$,
then triangle $AEF$ is not equilateral and its Euler line is parallel to $BC$.
\end{lemma}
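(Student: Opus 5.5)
The plan is to prove the lemma by a direct trigonometric computation of the direction of the Euler line of a triangle in terms of its angles. That formula is applied twice: once to $\triangle ABC$ to locate $\LL$, and once to $\triangle AEF$ to locate its Euler line. Directed angles modulo $180^\circ$ are used throughout, as elsewhere in the paper.

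\emph{Step 1: Euler line direction from the angles.} Place the foot of the altitude from $A$ at the origin with $BC$ on the $x$-axis, so that
\[
A=(0,h),\qquad B=(-h\cot B,0),\qquad C=(h\cot C,0).
\]
The orthocenter is then $H=(0,h\cot B\cot C)$ and the centroid is $G=\bigl(h(\cot C-\cot B)/3,\;h/3\bigr)$. Let $\psi$ be the directed angle from $BC$ to the Euler line $GH$. Computing the slope of $GH$ gives
\[
\tan\psi=\frac{1-3\cot B\cot C}{\cot C-\cot B}=\frac{\tan B\tan C-3}{\tan B-\tan C}.
\]
This is valid whenever $B\ne C$; for a non-equilateral triangle with $B=C$ the Euler line is perpendicular to $BC$. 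The same formula holds for any triangle, with the base and the base angles renamed.

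\emph{Step 2: Angles of $\triangle AEF$.} Its apex angle at $A$ is $A$ (or $\pi-A$ in the external configuration), and its sides $AE$, $AF$ lie along $CA$, $AB$. Its base angles at $E$ and $F$ are the angles that $\LL$ makes with $CA$ and $AB$. These can be written as $E'=\angle(\LL,CA)$ and $F'=\angle(AB,\LL)$, namely $C+\psi$ and $B-\psi$ up to sign and supplement, consistent with \eqref{eq:shapes}. In directed form one has $E'+F'+A\equiv 0$.

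\emph{Step 3: Apply Step 1 to $\triangle AEF$.} Take base $EF\subset\LL$. The Euler line of $AEF$ then makes with $\LL$ the angle $\psi'$ given by
\[
\tan\psi'=\frac{\tan E'\tan F'-3}{\tan E'-\tan F'}.
\]
The claim ``Euler line of $AEF$ parallel to $BC$'' is exactly $\psi'\equiv-\psi \pmod\pi$, because $BC$ makes angle $-\psi$ with $\LL$. So one must verify the identity
\[
\frac{\tan(B-\psi)\tan(C+\psi)-3}{\tan(C+\psi)-\tan(B-\psi)}=-\tan\psi,
\]
where $\tan\psi$ is given by Step 1. The plan is to set $t=\tan\psi$, $\beta=\tan B$, $\gamma=\tan C$, expand with the tangent addition formula, substitute $t=(\beta\gamma-3)/(\beta-\gamma)$, and clear denominators. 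If the identity does not close purely in $\beta,\gamma$, use $\tan A=(\beta+\gamma)/(\beta\gamma-1)$.

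This algebraic verification is the main obstacle. It is where sign and branch bookkeeping — which of $\angle(\LL,CA)$ or its supplement is the corner angle — can go wrong. Two features should keep it manageable: the relation is symmetric under $\psi\leftrightarrow-\psi$, and the formula depends only on tangents, which are insensitive to replacing an angle by its supplement up to sign. As a fallback, the same verification can be done in barycentrics with the Euler line equation from \cite[\S11.4.4]{Yiu} and Theorem~\ref{thm:cornerSideLengths}, though this is heavier.

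\emph{Step 4: Non-equilateral condition and degenerate cases.} If $\triangle AEF$ were equilateral, its apex angle, which is $A$ or $\pi-A$, would equal $60^\circ$. This forces $A\in\{60^\circ,120^\circ\}$, which is excluded by hypothesis. Hence the Euler line of $AEF$ is a well-defined line and the computation in Step 3 applies. The case $\tan E'=\tan F'$ (isosceles $AEF$) is handled separately by the perpendicularity remark in Step 1 or by continuity. Likewise, a right angle in either triangle, where some tangent is infinite, is handled by continuity of the direction of the Euler line.
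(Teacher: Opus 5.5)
The overall strategy (a formula for the direction of the Euler line in terms of the base angles, applied to $ABC$ and then to $AEF$) is sound, and Step~4 is fine. However, the identity you set out to verify in Step~3 is false, so the plan fails exactly at the step you flagged. The two sign conventions you combine there are incompatible.

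Take $\psi=\angle(BC,\ell)$ and the orientation of Step~1, where as directed angles $B=\angle(BC,BA)$ and $C=\angle(CA,CB)$. Your own definitions then give
\[
E'=\angle(\ell,CA)=\angle(\ell,BC)+\angle(BC,CA)\equiv-(C+\psi),\qquad F'=\angle(AB,\ell)\equiv-(B-\psi).
\]
So $E'+F'\equiv A$, whereas your substitution encodes $E'+F'\equiv B+C\equiv-A$. Negating both base angles leaves $\tan E'\tan F'-3$ unchanged but flips the sign of $\tan E'-\tan F'$.

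Reading the opposite orientation does not rescue it. There $E'=C+\psi$ and $F'=B-\psi$ do hold, but $\psi$ is then the angle from $\ell$ to $BC$, and the target becomes $\psi'\equiv+\psi$. Either way, what must be shown is
\[
\frac{\tan(B-\psi)\tan(C+\psi)-3}{\tan(C+\psi)-\tan(B-\psi)}=+\tan\psi .
\]
Your version with $-\tan\psi$ fails. For $B=40^\circ$, $C=65^\circ$ one has $\tan\psi\approx0.920$, while the left side is $\approx(0.143-3)/(-3.152+0.045)\approx+0.920$. Tangents are insensitive to supplements only up to sign, and here the sign is the entire content.

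With the sign repaired, the verification is a few lines. Put $\beta=\tan B$, $\gamma=\tan C$, $t=\tan\psi$, $P=\tan(B-\psi)=(\beta-t)/(1+\beta t)$ and $R=\tan(C+\psi)=(\gamma+t)/(1-\gamma t)$. Then
\[
(PR-3)-t(R-P)=\frac{\bigl((\beta\gamma-3)-(\beta-\gamma)t\bigr)(1+t^2)}{(1+\beta t)(1-\gamma t)},
\]
which vanishes by Step~1. This also shows your signed version could hold only when $t(R-P)=0$.

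You should also say explicitly that the degenerate case where $\ell$ passes through $A$ is excluded. That happens when $AB=AC$ or $A=90^\circ$, giving $E=F=A$.

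Once these repairs are made, your argument is a complete, self-contained proof. The paper itself gives no proof of this lemma; it simply quotes it as a well-known property of the Euler line with a literature citation. Your corrected computation would therefore supply an argument the paper lacks.
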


\begin{figure}[h!t]
\centering
\includegraphics[width=0.5\linewidth]{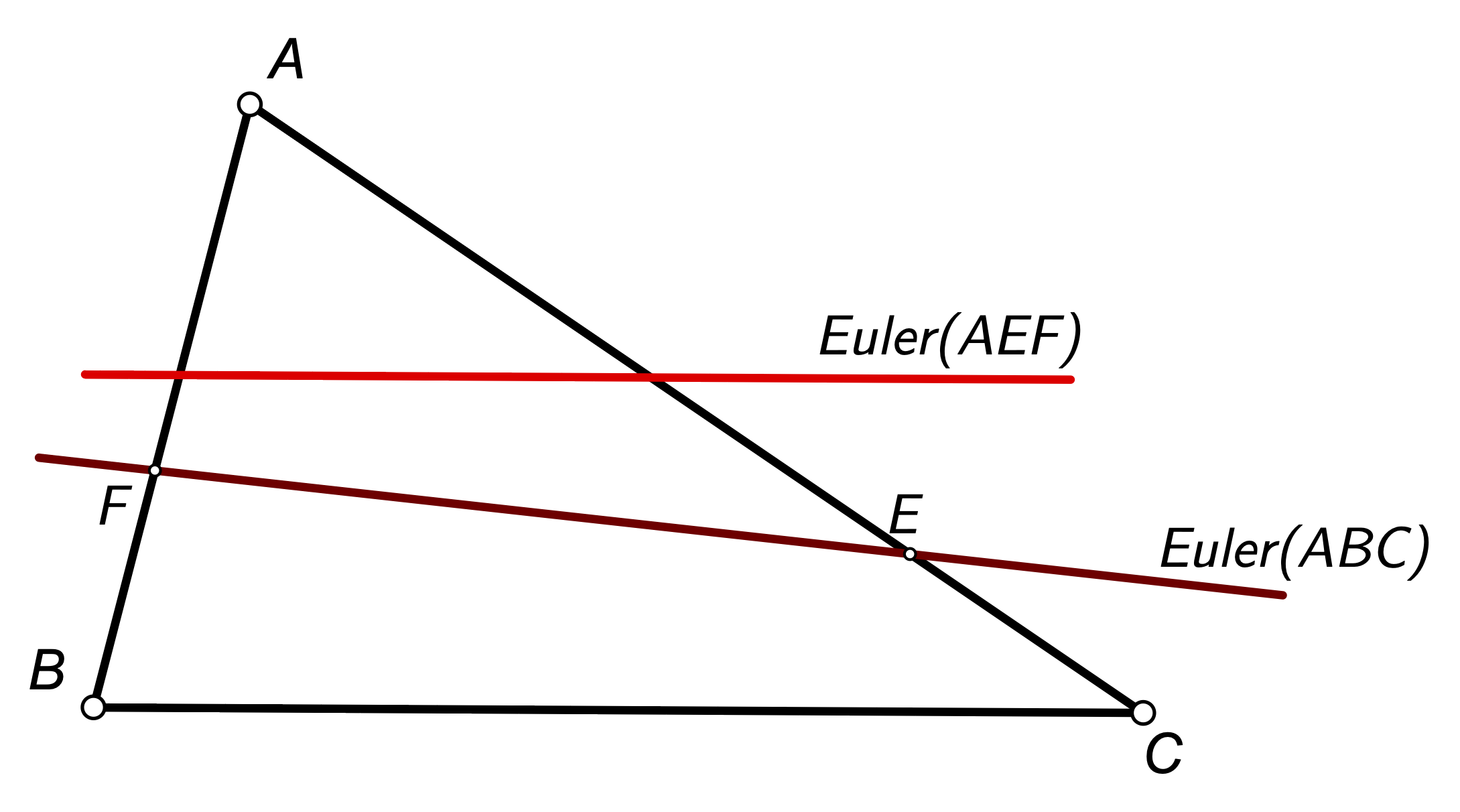}
\caption{red line is parallel to $BC$}
\label{fig:EulerLemma}
\end{figure}

\begin{remark}\label{rem:sixty}
When angle $A$ is $60^\circ$ or $120^\circ$, the conclusion fails for a
picturesque reason: there the corner triangle cut off by the Euler line
is \emph{equilateral}, so it has no Euler line at all.
\end{remark}

\begin{remark}
This lemma propagates to the whole pencil of lines parallel to the Euler line because
parallel transversals cut off corner triangles similar to one another (they are images
of each other under homotheties centered at the vertex).
\end{remark}

\begin{remark}
Similarly, if $\angle B\notin\{60^\circ,120^\circ\}$ the Euler line of $BFD$ is parallel
to $CA$, and if $\angle C\notin\{60^\circ,120^\circ\}$ the Euler line of $CDE$ is
parallel to $AB$.
\end{remark}


\begin{theorem}[The Reciprocity Theorem]
\label{thm:reciprocity}
Suppose no angle of $\triangle ABC$ equals $60^\circ$ or $120^\circ$ and the Euler line
of $\triangle ABC$ is an admissible transversal. Let $X$ be a triangle center that is
concurrent for the Euler line, and let $Y$ be its perspector.
Then $Y$ is concurrent for the Euler line of $\triangle ABC$ with perspector $X$.
\end{theorem}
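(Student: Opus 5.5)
The plan is to show that the three cevians $A\,Y(AEF)$, $B\,Y(BFD)$, $C\,Y(CDE)$ all pass through the $X$-point of $\triangle ABC$ itself. Here $Y(T)$ denotes the $Y$-point of a triangle $T$, and $\LL$ is the Euler line of $ABC$. If this holds, $Y$ is concurrent for $\LL$ and its perspector is $X$.

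\emph{Reading of the hypothesis.} Since $Y$ is a triangle center, $Y(T)$ is defined for every triangle $T$ by its center function. Because $Y$ is a center, "concurrent for the Euler line" is a statement about center functions. I read it as an identity in the shape of the reference triangle: for every triangle $T$ for which the construction is admissible, the $X$-points of the corner triangles cut off by the Euler line of $T$ span cevians of $T$ that concur at $Y(T)$. This is what makes $Y$ a well-defined center in the first place. By Lemma~\ref{lem:cevfixed} and Corollary~\ref{cor:perspfixed}, we may replace the Euler line of $T$ by any admissible line parallel to it without changing the cevians or the perspector. I would state explicitly that this reading is required and that it is the sense of "concurrent for the Euler line" used in Theorem~\ref{thm:Euler}.

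\emph{Key step at vertex $A$.} Apply this reading with reference triangle $T=AEF$, whose vertices $A,E,F$ play the roles of the vertices. The angle hypothesis ($\angle A\neq 60^\circ,120^\circ$) and Lemma~\ref{lem:cornereuler} show that $AEF$ is not equilateral, so it has an Euler line, and that this Euler line is parallel to $BC$. The line $BC$ meets the sidelines $AE$ and $AF$ of $T$, namely the lines $CA$ and $AB$, at $C$ and $B$. Thus $BC$ is a transversal of $T$ parallel to its Euler line, and the corner triangle of $T$ at vertex $A$ cut off by $BC$ is exactly $\triangle ACB$. I still need to check admissibility, i.e.\ that $BC$ avoids $E,F$ and is not parallel to $EF$; this follows from $\LL$ being admissible for $ABC$.

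Consequently, the perspector $Y(AEF)$ lies on the cevian from $A$ to the $X$-point of that corner triangle, which is the $X$-point of $ABC$. By Lemma~\ref{lem:cevfixed}, the same conclusion holds if the actual Euler line of $AEF$ is used instead of $BC$. Hence $A$, $Y(AEF)$ and $X(ABC)$ are collinear.

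\emph{Conclusion and main obstacle.} Using $\angle B,\angle C\notin\{60^\circ,120^\circ\}$ and the companion remark to Lemma~\ref{lem:cornereuler}, the same argument at $B$ with $T=BFD$ (whose Euler line is parallel to $CA$) and at $C$ with $T=CDE$ (whose Euler line is parallel to $AB$) gives that $B\,Y(BFD)$ and $C\,Y(CDE)$ also pass through $X(ABC)$. Therefore $Y$ is concurrent for $\LL$ with perspector $X$.

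I expect the main obstacle to be the first step: justifying that concurrence of $X$ for the Euler line may be applied to the corner triangles $AEF$, $BFD$, $CDE$, not just to $ABC$. This is where the center-function (shape-identity) reading must be made precise. I would also check the degenerate cases, namely that $X(ABC)\neq A$ and that the transversal $BC$ is admissible for each corner triangle, so that every cevian is well defined.
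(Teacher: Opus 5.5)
Your proposal is correct and matches the paper's proof. In both, the hypothesis is applied to each corner triangle, e.g.\ $AEF$, with the transversal $BC$, which is admissible and parallel to the Euler line of $AEF$ by Lemma~\ref{lem:cornereuler}; the corner triangle at $A$ is then $ABC$ itself, so the perspector $Y(AEF)$ lies on the line $AX$, and the same holds at $B$ and $C$ (the paper phrases this through the complete quadrilateral, but the logic is the same). You are also right that the hypothesis must be read as a center-level (shape) identity in order to apply it to $AEF$; the paper uses this reading implicitly when it invokes Corollary~\ref{cor:perspfixed} for the corner triangle.
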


Simply put, for the Euler line, center $X$
has perspector $Y$ if and only if center $Y$ has perspector $X$.

\begin{proof}
We will prove the result for any line parallel to the Euler line.
Fix an admissible transversal $\LL$ parallel to the Euler line, with pierce points
$D,E,F$, and consider the complete quadrilateral formed by the four lines
$BC$, $CA$, $AB$,~$\LL$ (see Figure~\ref{fig:recip1}).
Its four triangles are $ABC$, $AEF$, $BFD$, and $CDE$.
Each omits
exactly one of the four lines, and the omitted line is a transversal of it. By
Lemma~\ref{lem:cornereuler} (and the remarks following), the configuration is \emph{self-reciprocal}: each of the
three corner triangles sees its omitted line as a transversal parallel to its own Euler
line, namely
\[
  BC\parallel \text{Euler}(AEF),\qquad
  CA\parallel \text{Euler}(BFD),\qquad
  AB\parallel \text{Euler}(CDE),
\]
and these transversals are admissible for the corner triangles (their pierce points are
$D,B,C$ on the sidelines of $AEF$, and cyclically, none of which is a vertex).

Now consider the center $Y$ in the corner triangle $AEF$.
By Corollary~\ref{cor:perspfixed}, the $Y$-point of $AEF$ is the perspector formed by taking $X$-points
in the corner triangles formed by triangle $AEF$ and \emph{any} admissible transversal
parallel to the Euler line of $\triangle AEF$.
By the self-reciprocity just established, we may
take that transversal to be the line $BC$. The transversal $BC$ meets the sidelines
$EF$, $FA$, $AE$ of $\triangle AEF$ at $D$, $B$, $C$ respectively, so the corner
triangles of the pair $(\triangle AEF,\,BC)$ at the vertices $A$, $E$, $F$ are
\[
   ABC,\qquad CDE,\qquad BFD,
\]
and the corresponding corner centers (using $X$-points) are $X=X_{ABC}$ itself,
$X_{CDE}$, and~$X_{BFD}$ (Figure~\ref{fig:recip1}).
In particular, the cevian issuing from $A$ is the
line $AX$, and the perspector, which is the $Y$-point of $\triangle AEF$, lies on it. Hence,
when $Y$-points are taken in the corner triangles formed by $\triangle ABC$ and $\LL$,
the line joining $A$ to the $Y$-point of $\triangle AEF$ is the line $AX$ and passes through $X$.

\begin{figure}[h!t]
\centering
\includegraphics[width=1\linewidth]{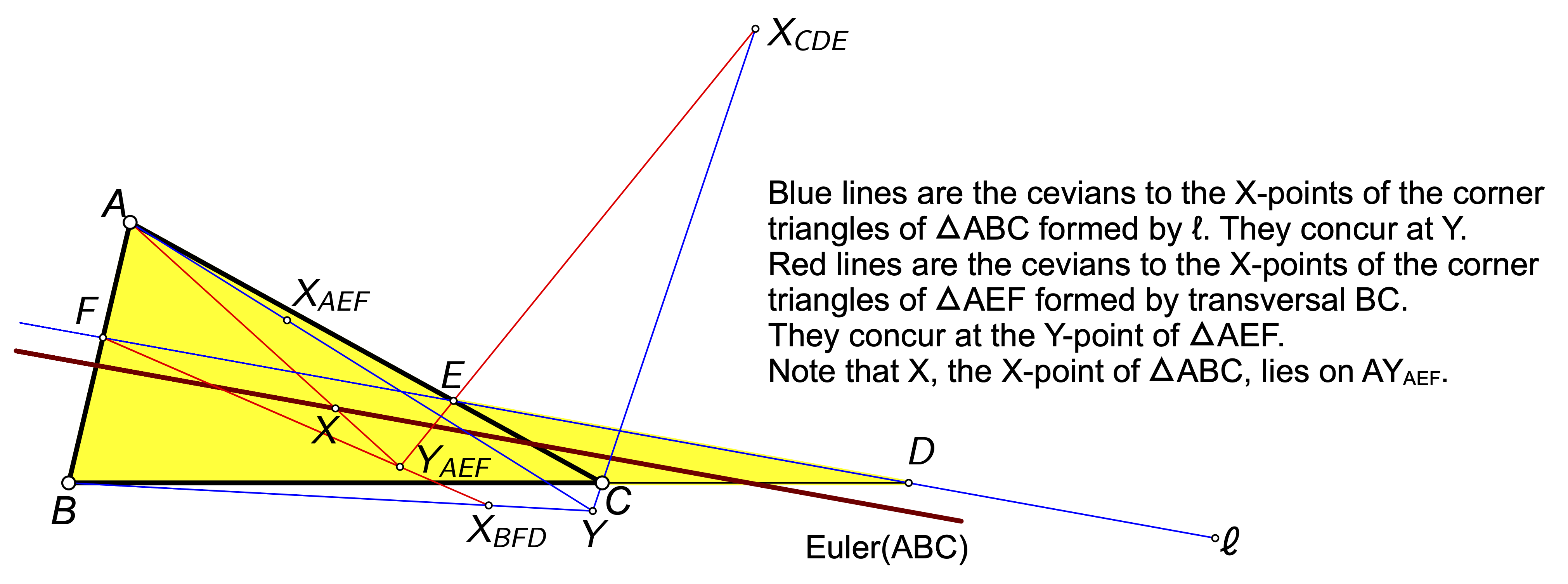}
\caption{construction of $Y$-point of $\triangle AEF$}
\label{fig:recip1}
\end{figure}

The same argument applied to the pairs $(\triangle BFD,\,CA)$ and $(\triangle CDE,\,AB)$.
Their corner triangles at $B$ and at $C$ are again $ABC$. This shows that the cevians
from $B$ and from $C$  (when using $Y$-points in the corner triangles) are the lines $BX$ and $CX$ (Figure~\ref{fig:recip2}).
\begin{figure}[h!t]
\centering
\includegraphics[width=1\linewidth]{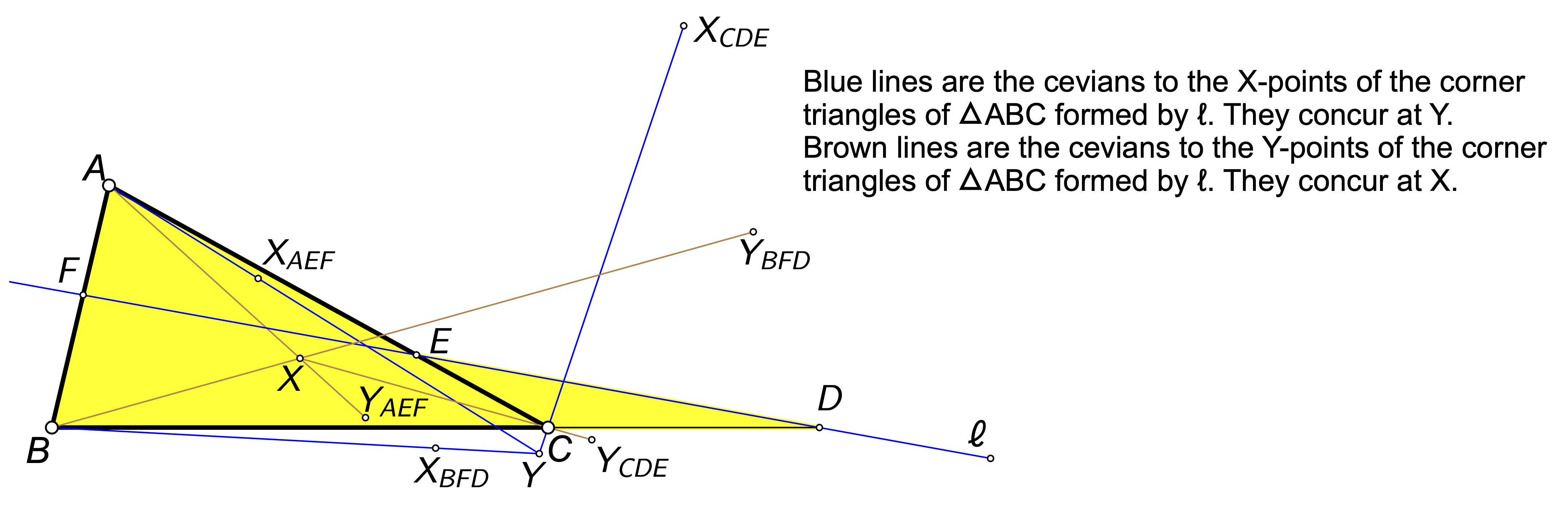}
\caption{construction of $X$ and $Y$ points of $\triangle ABC$}
\label{fig:recip2}
\end{figure}

The three cevians to the $Y$-points of the corner triangles of $\triangle ABC$
therefore concur at $X$, showing that the center $Y$ is concurrent for
$\LL$, with perspector~$X$.
\end{proof}

\goodbreak
\begin{proposition}\label{prop:steiner-euler}
The Steiner point, $X_{99}$, is concurrent for the Euler line.
\end{proposition}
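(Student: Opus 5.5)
The plan is to avoid computing with the Steiner point directly. Instead, I would obtain $X_{99}$ from $X_4$ by two moves already available in the paper: the Reciprocity Theorem (Theorem~\ref{thm:reciprocity}) and conjugation invariance (Corollary~\ref{cor:conj}). The key identification is that $X_{99}$, with barycentrics $\bigl(1/(b^2-c^2):\cdots\bigr)$, is the isotomic conjugate of $X_{523}$, with barycentrics $(b^2-c^2:\cdots)$. Moreover, $X_{523}$ is the point at infinity in the direction orthogonal to the Euler line.

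First step: by Theorem~\ref{thm:orth}, $X_4$ is concurrent for every admissible transversal. Its perspector is the infinite point orthogonal to $\LL$. Taking $\LL$ to be the Euler line (or any admissible parallel to it), this perspector is $X_{523}$. I would confirm this by checking that $(b^2-c^2:c^2-a^2:a^2-b^2)$ is the infinite point of the perpendicular to the Euler line. One way is to verify $\sum S_A(S_B-S_C)\cdot(\text{perp.\ direction})$ via the standard perpendicularity test in \cite{Yiu}. Another is to cite \cite{ETC}.

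Second step: apply the Reciprocity Theorem with $X=X_4$ and $Y=X_{523}$. This gives that $X_{523}$ is concurrent for the Euler line, with perspector $X_4$. The hypotheses of that theorem (no angle equal to $60^\circ$ or $120^\circ$) hold on an open dense set of triangles. Concurrence is the vanishing of $S+T$, which by Theorem~\ref{thm:cornerSideLengths} is an algebraic expression in $a,b,c$ once $\LL$ is fixed as the Euler line. So vanishing on an open set of shapes extends to all triangles for which $\LL$ is admissible, by continuity. Here I would note that $X_{523}$ has a center function that is antisymmetric rather than symmetric in its last two arguments. This does not affect Theorem~\ref{thm:crit} or the reciprocity argument, both of which only use the barycentric coordinates of the center in each corner triangle. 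The same remark applies to $X_{99}$.

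Third step: apply Corollary~\ref{cor:conj} (operation (O1), isotomic conjugation) to $X_{523}$. This yields that $X_{99}$ is concurrent for the Euler line, and the conjugation replaces $S+T$ by $(S+T)/(ST)$. The main obstacle I expect is the nondegeneracy hypothesis $S,T\neq0$ there, that is, that $X_{523}$ of a corner triangle lies on none of its sidelines. For a corner triangle, $X_{523}$ is at infinity orthogonal to that triangle's Euler line. By Lemma~\ref{lem:cornereuler} that Euler line is parallel to the opposite side of $ABC$, so this infinite point is on a sideline of the corner triangle only if that sideline is perpendicular to a side of $ABC$. That is a closed, nongeneric condition. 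I would exclude it generically and again recover the remaining cases by continuity of the algebraic identity. If this bookkeeping proves awkward, the fallback is a direct check of $S+T=0$ for $f=1/(b^2-c^2)$ using Theorem~\ref{thm:cornerSideLengths} with $[u:v:w]=[S_A(S_B-S_C):S_B(S_C-S_A):S_C(S_A-S_B)]$.
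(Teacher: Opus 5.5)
Your argument is correct, and it takes a genuinely different route from the paper's.

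The paper proves the proposition by direct computation. With $f=1/(y^2-z^2)$ it reduces $S+T=0$ to $pqr+p'q'r'=0$ and evaluates these quantities from the pierce points. It then factors the resulting cubic in $(u,v,w)$ as $\nu=-LQ$. The linear factor $L$ has coefficient vector $X_4-X_3$, so $L=0$ says exactly that the transversal is parallel to the Euler line.

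You instead reach $X_{99}$ structurally, $X_4\to X_{523}\to X_{99}$: first the Reciprocity Theorem, then isotomic conjugation (O1). For $X=X_4$ the reciprocity step can even be made explicit without Theorem~\ref{thm:reciprocity}. The $X_{523}$-point of $AEF$ is the infinite point perpendicular to the Euler line of $AEF$, which is parallel to $BC$ by Lemma~\ref{lem:cornereuler}. So $AG$ is the altitude from $A$, and the three cevians meet at the orthocenter.

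The degenerate cases also cause less trouble than you expect:
\begin{itemize}
\item The quantities $s_V,t_V$ for $X_{99}$ are the reciprocals of those for $X_{523}$. Hence the hypothesis $S,T\neq0$ of Corollary~\ref{cor:conj} is just the finiteness of $S$ and $T$ for $X_{99}$. The paper assumes this tacitly too, when it writes $S+T=(pqr+p'q'r')/(pqr\,p'q'r')$.
\item An angle of $60^\circ$ or $120^\circ$ makes one corner triangle equilateral, and there the Steiner point is undefined.
\end{itemize}
So no continuity argument is needed.

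As for what each approach buys: your route needs no computation and explains \emph{why} $X_{99}$ appears, as the isotomic conjugate of the reciprocity partner of the orthocenter. However, it depends on Lemma~\ref{lem:cornereuler}, which the paper only cites. The paper's computation is self-contained. It also shows that the Euler direction is an explicit linear component of the whole concurrence locus $\nu=0$ of $X_{99}$.
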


\begin{proof}
The Steiner point has barycentric center function $f(x,y,z)=1/(y^{2}-z^{2})$.
Put
\[
  p=|AE|^{2}-|EF|^{2},\qquad p'=|EF|^{2}-|FA|^{2},
\]
and let $q,q',r,r'$ be the cyclic analogues. By
Theorem~\ref{thm:crit}, $s_A=1/p$, $t_A=1/p'$, and so on, whence
\[
  S+T=\frac{p'q'r'+pqr}{pqr\,p'q'r'},
\]
so $AG,BH,CI$ concur if and only if
$pqr+p'q'r'=0$.

The trace points are $D=(0:w:-v)$, $E=(-w:0:u)$, $F=(v:-u:0)$. Using the
barycentric distance formula, one computes
\[
  p=\frac{-u^{2}m_A}{(u-v)^{2}(u-w)},\qquad
  p'=\frac{u^{2}m_A'}{(u-v)(u-w)^{2}},
\]
where
\[
  m_A=(a^{2}-b^{2})(u-v)+c^{2}(v-w),\qquad
  m_A'=(a^{2}-c^{2})(u-w)+b^{2}(w-v),
\]
the remaining quantities following by the cyclic substitution
$(a,b,c,u,v,w)\mapsto(b,c,a,v,w,u)$. Multiplying and using
$(u-w)(v-u)(w-v)=-(u-v)(v-w)(w-u)$,
we have
\[
  pqr=\frac{u^{2}v^{2}w^{2}\,m_Am_Bm_C}{\Delta^{3}},\qquad
  p'q'r'=\frac{u^{2}v^{2}w^{2}\,m_A'm_B'm_C'}{\Delta^{3}},
\]
where $\Delta=(u-v)(v-w)(w-u)$.

As $u,v,w$ are nonzero and distinct, concurrency is equivalent to
\[
  \nu:=m_Am_Bm_C+m_A'm_B'm_C'=0.
\]

The cubic $\nu$ factors as $\nu=-L\,Q$ with $L$ linear and $Q$ quadratic in
$(u,v,w)$. The coefficient vector of $L$ is
\[
  \bigl(\,2a^{4}-a^{2}b^{2}-a^{2}c^{2}-b^{4}+2b^{2}c^{2}-c^{4}\;:\ :\,\bigr)
  \;=\;X_4-X_3,
\]
the orthocenter minus the circumcenter, taken in the common normalization in
which each has coordinate sum $16K^{2}$ (with $K$ the area of $ABC$). Its
coordinates sum to zero, so $X_4-X_3$ is the point at infinity of the line
$X_3X_4$, namely of the Euler line. Hence $L=0$ expresses exactly that
$\LL$ passes through the infinite point of the Euler line, i.e.\ that $\LL$
is parallel to it. By hypothesis this holds, so $\nu=0$; therefore $S+T=0$,
and $AG,BH,CI$ concur.
\end{proof}

If $X$ is a concurrent center, by \emph{the orbit of $X$} we mean the set of all
centers that can be obtained from $X$ by successively applying the preserving operations (O1)-(O6)
of Definition~\ref{def:ops}.

\begin{theorem}\label{thm:orbit}
Every center in the orbit of $X_{99}$ is concurrent for the Euler line.
\end{theorem}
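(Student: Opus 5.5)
Every center in the orbit is built from the center function $f_{99}(x,y,z)=1/(y^2-z^2)$ by a finite sequence of the preserving operations (O1)--(O6). So the plan is to induct on the length of that sequence, with the invariant ``concurrent for the Euler line $\LL$''. The base case is Proposition~\ref{prop:steiner-euler}. For the inductive step we invoke Theorem~\ref{thm:pres}. Its proof, via Lemma~\ref{lem:closure} and the Multiplier Lemma~\ref{lem:mult}, is transversal-by-transversal: each operation either multiplies $S$ and $T$ by a common nonzero factor $\Lambda$, or replaces $(S,T)$ by $(1/S,1/T)$. In both cases the zero locus $\{\LL : S+T=0\}$ is unchanged, so concurrence for the specific line $\LL$ passes from $f$ to its image.

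Each operation must be checked against one fixed line $\LL$, not merely the set of universally concurrent centers, and nothing in the earlier proofs used universality. For (O3) and (O5) the common factor comes from Lemma~\ref{lem:men1} or from full symmetry, which holds for any admissible transversal. For (O6), and hence (O4), the argument in Lemma~\ref{lem:mult} uses only the Flip Lemma~\ref{lem:flip}: the two corner angles at each pierce point have equal $\cos^2$. That is again a property of every single admissible line. (O1) and (O2) reduce to $S+T\mapsto (S+T)/ST$, together with (O3).

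The main obstacle is the nondegeneracy hypothesis for (O1) and (O2): $S$ and $T$ must be finite and nonzero for the Euler line. For $X_{99}$, $S=1/(pqr)$ is never zero. It is finite exactly when $pqr\neq0$, i.e.\ when no corner triangle is isosceles in the relevant legs, since otherwise the Steiner point of that corner triangle is a point at infinity or undefined. I would handle this as Corollary~\ref{cor:conj} does: work on the open set of triangles where all intermediate center functions are finite and nonzero on the corner triangles cut by the Euler line. On that set the zero-locus argument is valid at every step. The remaining triangles form a proper algebraic subset, and the concurrence determinant of Theorem~\ref{thm:crit} is a rational function of $a,b,c$, so the identity extends to them by continuity wherever the centers are defined.

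Finally, Theorem~\ref{thm:direction} shows the conclusion holds for every admissible transversal parallel to the Euler line, not only the Euler line itself. This is why the factorization $\nu=-LQ$ in Proposition~\ref{prop:steiner-euler}, which depends only on direction, suffices as the base case for the whole parallel pencil.
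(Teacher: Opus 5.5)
Your plan is the paper's proof: Proposition~\ref{prop:steiner-euler} gives the base case, and Theorem~\ref{thm:pres} carries concurrence for the Euler line through each preserving operation, since its proof works one transversal at a time; the paper states this in one line, and your induction and your check of (O1)--(O6) for a fixed line simply make that citation explicit. The only soft spot is the optional extension to degenerate triangles by continuity, which does not hold as written, because the concurrence determinant is not in general a rational (or even continuous) function of $a,b,c$: the corner side lengths involve square roots and absolute values, and (O6) allows an arbitrary $\Phi$. The paper does not attempt that extension, and the statement does not require it.
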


\begin{proof}
Since $X_{99}$ is concurrent for the Euler line, the result follows from Theorem~\ref{thm:pres}.
\end{proof}

\begin{theorem}\label{cor:members}
The center $X_n$ is concurrent for the Euler line for\\
$n\in\{107,110,163,661,662,669,670,798,799,822,823,850,930\}$.
\end{theorem}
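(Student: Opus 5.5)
The approach is to use Theorem~\ref{thm:orbit}: show that each listed $X_n$ lies in the orbit of $X_{99}$ under the preserving operations (O1)--(O6). Any center that cannot be reached this way will be handled by the Reciprocity Theorem~\ref{thm:reciprocity}, or, failing that, by a direct computation modelled on Proposition~\ref{prop:steiner-euler}.

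First I would reduce each center to the normal form $a^{k}(b^2-c^2)^{\pm1}\cdot(\text{factor})$. From ETC I take the barycentric center function of each listed $X_n$; when ETC gives trilinears, I multiply by $a$ as in Section~\ref{sec:proof}. I then write the result as $(b^2-c^2)^{\pm1}$ times powers of $a$, powers of $bc$, even powers of $S_A$, functions of $\cos^2A$, and symmetric factors. The seed is $X_{99}=1/(b^2-c^2)$. Recalling the entries roughly:
\begin{itemize}
\item side-power (O3) gives $X_{662}=a/(b^2-c^2)$, $X_{110}=a^2/(b^2-c^2)$ and $X_{799}=1/(a(b^2-c^2))$;
\item isotomic conjugation (O1) gives $X_{523}$-type functions $b^2-c^2$, and with further side-powers $X_{661}=a(b^2-c^2)$, $X_{798}$, $X_{669}$ and $X_{850}=(b^2-c^2)/a^2$;
\item isogonal conjugation (O2) links the remaining pairs, for example $X_{670}\leftrightarrow X_{669}$ and $X_{823}\leftrightarrow X_{822}$.
\end{itemize}
For $X_{163}$, $X_{822}$, $X_{823}$ and $X_{930}$ I expect factors such as $S_A^{\pm2}$ or $\cos 2A$, $\cos 3A$-type expressions in $\cos^2A$. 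These are absorbed by (O4) and (O6), together with $bc\equiv\csc A$ and symmetric factors via (O5). I would record all of this in a table parallel to the one in Section~\ref{sec:proof}, with each row giving the ETC form, the reduced form, and the chain of operations from $X_{99}$.

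The main obstacle is any center, most likely $X_{107}$ (and possibly $X_{163}$), whose reduced form carries an \emph{odd} power of $S_A$ relative to $X_{99}$. An odd power of $S_A$ is not a preserving operation: $S_BS_C\equiv 1/S_A$ modulo the symmetric $S_AS_BS_C$, and (O4) allows only even powers. For such a center I would first try the Reciprocity Theorem. The table lists $X_{107}$ with perspector $X_{125}$, so it suffices to prove that $X_{125}$ is concurrent for the Euler line with perspector $X_{107}$. For that I would imitate Proposition~\ref{prop:steiner-euler}:
\begin{enumerate}
\item insert the corner side lengths of Theorem~\ref{thm:cornerSideLengths} into $S+T$;
\item clear the positive factors $u^2v^2w^2/\Delta^k$;
\item show that the remaining polynomial has the linear factor $L$ whose coefficient vector is $X_4-X_3$, i.e.\ that it vanishes whenever $\LL$ is parallel to the Euler line.
\end{enumerate}
The perspector identification would then follow from the Perspector Formula (Theorem~\ref{thm:perspformula}).

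Once any one representative of a new orbit is settled, Theorem~\ref{thm:pres} and Theorem~\ref{thm:direction} propagate concurrence to the rest of that orbit and to all transversals parallel to the Euler line. That completes the list.
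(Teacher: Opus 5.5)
Your plan is the paper's proof. It writes every listed center as the image of $X_{99}\equiv 1/(b^2-c^2)$ under the preserving operations (O1)--(O6), for example $X_{110}=a^2X_{99}$, $X_{662}=aX_{99}$, $X_{670}=(bc)^2X_{99}$ and $X_{661}=a^2/X_{662}$, and then invokes Theorem~\ref{thm:orbit}.

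The obstacle you anticipate does not arise. The paper's table has $X_{107}=S_A^{-2}X_{99}$, i.e.\ $X_{107}\equiv 1/\bigl(S_A^2(b^2-c^2)\bigr)$. This is an even power of $S_A$, handled directly by (O4), and is consistent with $X_{107}$ lying on the circumcircle. Likewise $X_{163}=a^3X_{99}$ is a pure side-power. $X_{930}$ falls under (O3) together with (O6) via $a^4+b^4+c^4-2a^2b^2-2a^2c^2-b^2c^2=(bc)^2(4\cos^2A-3)$.

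So your reciprocity and direct-computation fallback is never needed. The paper uses the Reciprocity Theorem in the opposite direction, to obtain $X_{125}$ from $X_{107}$.
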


\begin{proof}
From \cite{ETC}, we find that the center function for $X_{99}$, the Steiner point,
is $(a^2-b^2)(c^2-a^2)$ and that the isogonal conjugate of $X_{99}$ is $X_{512}$.
The following identities therefore show that each $X_n$ is concurrent by Theorem~\ref{thm:orbit}.
\[
\begin{array}{llll}
X_{512}=a^2/X_{99}, &X_{523}=1/X_{99}&\\[2pt]
X_{107}=S_A^{-2}X_{99}, &X_{110}=a^{2}X_{99}, &  X_{163}=a^{3}X_{99}, &\\[2pt]
X_{662}=aX_{99}, & X_{669}=a^4X_{512}, & X_{670}=(bc)^{2}X_{99}, & \\[2pt]
X_{661}=a^2/X_{662}, & X_{799}=bc\,X_{99}, & X_{822}=a^{3}S_A^{2}X_{512}, &\\[2pt]
X_{823}=bc(S_BS_C)^{2}X_{99}, & X_{850}=(bc)^{2}X_{512}, &X_{930} = (bc)^{-2}\,\bigl(4\cos^{2}\!A - 3\bigr)^{-1} X_{99}, &\\[2pt]
X_{798}=a^3/X_{99}, & &X_{925} = (bc)^{-2}\,\bigl(4\cos^{2}\!A - 2\bigr)^{-1} X_{99}.
\end{array}
\]
Note: For $X_{930}$, $a^4+b^4+c^4-2a^2b^2-2a^2c^2-b^2c^2 = S_A^{2}-3b^2c^2 = (bc)^2(4\cos^2\!A-3)$,
so dividing by it is the co-side-power operation (O3) with $j=-2$ followed by the
angular multiplier (O6) with $\Phi(t)=1/(4t-3)$; both preserve concurrence, so
$X_{930}$ lies in the orbit of $X_{99}$. A similar remark holds for $X_{925}$.
\end{proof}

\begin{theorem}
The center $X_n$ is concurrent for the Euler line for\\
$n\in\{125, 250, 339, 476\}$.
\end{theorem}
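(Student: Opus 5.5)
The plan is to avoid any new coordinate computation and get all four centers from results already available: the Reciprocity Theorem (Theorem~\ref{thm:reciprocity}), the orbit principle (Theorem~\ref{thm:orbit}, i.e.\ Theorem~\ref{thm:pres}), and the centers already known to be concurrent. The point is that $X_{125}$ and $X_{476}$ do not lie in the orbit of $X_{99}$. Modulo symmetric factors, $X_{99}\equiv 1/(b^2-c^2)$ while $X_{125}\equiv (b^2-c^2)^2S_A$, so passing between them requires squaring, which is not a preserving operation. So these two must come from reciprocity, and $X_{250}$ and $X_{339}$ from the preserving operations.

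\textbf{Step 1 ($X_{476}$).} By Theorem~\ref{thm:persp}, $X_{265}$ is concurrent for every admissible transversal, in particular the Euler line. I would compute its perspector with the Perspector Formula (Theorem~\ref{thm:perspformula}), using the Euler line $[S_A(S_B-S_C):S_B(S_C-S_A):S_C(S_A-S_B)]$ and the side lengths of Theorem~\ref{thm:cornerSideLengths}, and check that it is $X_{476}$, as the table reports. Theorem~\ref{thm:reciprocity} then says $X_{476}$ is concurrent for the Euler line, with perspector $X_{265}$.

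\textbf{Step 2 ($X_{125}$).} The same argument starts from $X_{107}$, already shown concurrent in Theorem~\ref{cor:members} as $S_A^{-2}X_{99}$. I would confirm that its perspector is $X_{125}$ and again apply reciprocity.

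\textbf{Step 3 ($X_{250}$ and $X_{339}$).} I would read the barycentrics from \cite{ETC}. There $X_{250}$ is the isogonal conjugate of $X_{125}$, so $X_{250}=a^2/X_{125}$, an (O2) image. I expect $X_{339}$ to be $X_{125}$ times a Conway-even factor and a co-side-power; for instance $(b^2-c^2)^2S_A\cdot(S_BS_C)^{\pm2}$ or $(bc)^{j}$, possibly composed with (O1). I would pin down the exact identity from the ETC formula and record it in the same style as the table in Theorem~\ref{cor:members}. Theorem~\ref{thm:pres} then gives concurrence for both centers.

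\textbf{Main obstacle.} The hardest step is the hypothesis of Theorem~\ref{thm:reciprocity} that no angle equals $60^\circ$ or $120^\circ$. To remove it, I would clear denominators in $S+T$ (using the squared side lengths from Theorem~\ref{thm:cornerSideLengths}). This turns concurrence for the Euler line into the vanishing of an algebraic (after squaring, polynomial) expression in $a,b,c$. That expression vanishes on the dense set of triangles with no angle of $60^\circ$ or $120^\circ$, so by continuity it vanishes wherever the Euler line is admissible. A secondary point is that identifying the perspectors in Steps~1 and~2 must be a symbolic verification, not the numerical one behind the table. If that verification proves heavy, the fallback is to prove $X_{125}$ directly, as in Proposition~\ref{prop:steiner-euler}: show that $S+T$ factors with the linear form defining the Euler-line direction as a factor.
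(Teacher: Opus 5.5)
Your proposal is correct and is essentially the paper's proof: you apply reciprocity to $X_{107}$ (perspector $X_{125}$) and to $X_{265}$ (perspector $X_{476}$), then get $X_{250}=a^2/X_{125}$ and $X_{339}=1/X_{250}\equiv(bc)^2X_{125}$ from Theorem~\ref{thm:pres}; your extra checks (symbolic identification of the perspectors, and removing the $60^\circ/120^\circ$ hypothesis) go beyond what the paper verifies, although the latter is moot, since by Remark~\ref{rem:sixty} those are exactly the triangles in which a corner triangle is equilateral, and there these four centers are undefined anyway. One motivating remark is wrong but harmless: $X_{476}$, with trilinears $1/[(1+2\cos 2A)\sin(B-C)]$, equals $X_{110}/(4\cos^2A-1)$, an image of $X_{99}$ under (O3) and (O6), so it does lie in the orbit of $X_{99}$, and Theorem~\ref{thm:orbit} gives it directly without reciprocity.
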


\begin{proof}
Since $X_{107}$ is concurrent for the Euler line with perspector $X_{125}$,
by the Reciprocity Theorem (Theorem~\ref{thm:reciprocity}), $X_{125}$ must be concurrent for the Euler line.
Similarly, $X_{265}$ is concurrent for all lines (Theorem~\ref{thm:persp}), and for the Euler line,
the perspector is $X_{476}$. Hence $X_{476}$ must be concurrent for the Euler line.
Examining their center functions from \cite{ETC}, we find
\[
\begin{array}{llll}
X_{250}=a^2/X_{125}, &X_{339}=1/X_{250}, 
\end{array}
\]
so $X_{250}$ and $X_{339}$ are concurrent for the Euler line by Theorem~\ref{thm:pres}.
\end{proof}

\void{

\begin{theorem}
\label{thm:131}
The center $X_{131}$ is concurrent for any line parallel to the Euler line.
\end{theorem}

\begin{proof}
Let \(X_{30}=(x_{30}:y_{30}:z_{30})\) denote the point at infinity on the
Euler line.  For an admissible transversal \(\LL=[u:v:w]\), put
\[
L_E(u,v,w)=u x_{30}+v y_{30}+w z_{30}.
\]
Thus
\[
L_E(u,v,w)=0
\]
if and only if \(\LL\) is parallel to the Euler line.

We use the ETC barycentric center function for \(X(131)\).  To avoid conflict
with the notation \(S=s_As_Bs_C\) and \(T=t_At_Bt_C\) from Theorem~\ref{thm:crit}, write
\[
\sigma=\sin 2A+\sin 2B+\sin 2C,
\qquad
\tau=\tan 2A+\tan 2B+\tan 2C.
\]
Since ETC gives the trilinear first coordinate
\[
(\sec A)\bigl[2\tau-\sigma(\sec 2B+\sec 2C)\bigr]
\bigl(\tau-\sigma\sec 2A\bigr),
\]
the corresponding barycentric center function is
\[
f_{131}(A,B,C)
=
\tan A\,
\bigl[2\tau-\sigma(\sec 2B+\sec 2C)\bigr]
\bigl(\tau-\sigma\sec 2A\bigr).
\]

Now apply the concurrence criterion of Theorem~\ref{thm:crit} to this center function.
Thus
\[
\kappa(f_{131},\ell):=S+T=0
\]
is the concurrence expression, where
\[
S=s_As_Bs_C,\qquad T=t_At_Bt_C.
\]
Using the side-length formulas for the corner triangles and clearing
denominators, a direct expansion gives
\[
\kappa(f_{131},\ell)
=
\frac{L_E(u,v,w)\,R_{131}(a,b,c,u,v,w)}
     {D_{131}(a,b,c,u,v,w)},
\]
where \(R_{131}\) and \(D_{131}\) are explicit polynomials, and
\(D_{131}\neq 0\) on the nondegenerate open set under consideration.

Therefore, whenever \(\ell\) is parallel to the Euler line, we have
\[
L_E(u,v,w)=0,
\]
and hence
\[
\kappa(f_{131},\ell)=0.
\]
By Theorem~\ref{thm:crit}, the cevians \(AG\), \(BH\), and \(CI\) are concurrent.

In particular, this holds when \(\ell\) is the Euler line itself.  Thus \(X_{131}\)
is concurrent for the Euler line.
\end{proof}

%
%

\begin{theorem}\label{thm:x131euler}
The center $X_{131}$ is concurrent for the Euler line.
\end{theorem}

\begin{proof}
To avoid conflict with the notation $S=s_As_Bs_C$ and $T=t_At_Bt_C$ of
Theorem~3.1, write
\[
\sigma=\sin 2A+\sin 2B+\sin 2C,
\qquad
\tau=\tan 2A+\tan 2B+\tan 2C.
\]
Since ETC gives the trilinear first coordinate
$(\sec A)\bigl(2\tau-\sigma(\sec 2B+\sec 2C)\bigr)\bigl(\tau-\sigma\sec 2A\bigr)$,
the corresponding barycentric center function is
\begin{equation}\label{eq:f131etc}
f_{131}
=\tan A\,\bigl(2\tau-\sigma(\sec 2B+\sec 2C)\bigr)\bigl(\tau-\sigma\sec 2A\bigr).
\end{equation}

\emph{Step 1: an algebraic form of the center function.}
Consider a triangle with side lengths $x$, $y$, $z$, angles $\alpha$,
$\beta$, $\gamma$ opposite them, and area $K$, so that
\[
16K^2=2x^2y^2+2y^2z^2+2z^2x^2-x^4-y^4-z^4 .
\]
In analogy with the Conway notation, put
\[
S_x=y^2+z^2-x^2,\qquad S_y=z^2+x^2-y^2,\qquad S_z=x^2+y^2-z^2,
\]
and abbreviate
\[
m=x^2y^2z^2,\qquad h=8K^2,\qquad
f_1=y^2z^2-h,\quad f_2=z^2x^2-h,\quad f_3=x^2y^2-h .
\]
From $\sin\alpha=2K/(yz)$ and $\cos\alpha=S_x/(2yz)$ we obtain
\[
\tan\alpha=\frac{4K}{S_x},\qquad
\sin 2\alpha=\frac{2K\,S_x}{y^2z^2},\qquad
\cos 2\alpha=1-\frac{8K^2}{y^2z^2}=\frac{f_1}{y^2z^2},
\]
and cyclically.  Hence
\[
\sigma=\frac{2K}{m}\,N_\sigma,
\qquad
N_\sigma=x^2S_x+y^2S_y+z^2S_z,
\]
\[
\tau=\frac{2K}{f_1f_2f_3}\,N_\tau,
\qquad
N_\tau=S_xf_2f_3+S_yf_1f_3+S_zf_1f_2,
\]
\[
\sec 2\alpha=\frac{y^2z^2}{f_1},
\qquad
\sec 2\beta=\frac{z^2x^2}{f_2},
\qquad
\sec 2\gamma=\frac{x^2y^2}{f_3}.
\]
Substituting these expressions into \eqref{eq:f131etc} and combining
each factor over a common denominator gives
\begin{equation}\label{eq:f131alg}
f_{131}
=\frac{16K^3\,N_1N_2}{S_x\,\bigl(mf_1f_2f_3\bigr)^{2}},
\end{equation}
where
\begin{align}
N_1&=2N_\tau m-N_\sigma\bigl(z^2x^2f_1f_3+x^2y^2f_1f_2\bigr),
\label{eq:N1}\\
N_2&=N_\tau m-N_\sigma\,y^2z^2f_2f_3.
\label{eq:N2}
\end{align}
The factors $16K^3$ and $(mf_1f_2f_3)^{2}$ are symmetric in $x$, $y$,
$z$; a symmetric factor multiplies all three barycentric coordinates
equally and leaves the concurrency locus unchanged (Lemma~6.3,
operation (O5)).  We may therefore replace $f_{131}$ by the center
function
\begin{equation}\label{eq:g131}
g(x,y,z)=\frac{N_1N_2}{S_x},
\end{equation}
which is a rational function of the \emph{squared} side lengths,
homogeneous of degree $30$ in $x$, $y$, $z$.

\emph{Step 2: the concurrence condition as a polynomial identity.}
Apply Theorem~3.1 to the center function \eqref{eq:g131}.  For the
corner triangle $T_A$, let $N_1^{s},N_2^{s},S^{s}$ denote the values of
$N_1,N_2,S_x$ at the ordered side triple $(|FA|,|AE|,|EF|)$, and
$N_1^{t},N_2^{t},S^{t}$ their values at the cyclic order
$(|AE|,|EF|,|FA|)$; define the corresponding quantities for $T_B$ and
$T_C$ analogously.  Then $s_A=N_1^{s}N_2^{s}/S^{s}$,
$t_A=N_1^{t}N_2^{t}/S^{t}$, and cyclically, so $S+T=0$ is equivalent to
\begin{equation}\label{eq:Ncond}
\mathcal N:=
\Bigl(\prod_{V\in\{A,B,C\}}N_1^{s}N_2^{s}\Bigr)
\Bigl(\prod_{V}S^{t}\Bigr)
+
\Bigl(\prod_{V}N_1^{t}N_2^{t}\Bigr)
\Bigl(\prod_{V}S^{s}\Bigr)=0 .
\end{equation}
Because $g$ is homogeneous, multiplying the three squared side lengths
of one corner triangle by a common nonzero factor multiplies $s_V$ and
$t_V$ by the same quantity and does not affect \eqref{eq:Ncond}.  By
Theorem~11.1 we may therefore use the rescaled polynomial triples
\[
\begin{aligned}
\bigl(|FA|^2,\,|AE|^2,\,|EF|^2\bigr)
&\;\propto\;\bigl(c^2(u-w)^2,\;b^2(u-v)^2,\;Q\bigr),\\
\bigl(|DB|^2,\,|BF|^2,\,|FD|^2\bigr)
&\;\propto\;\bigl(a^2(v-u)^2,\;c^2(v-w)^2,\;Q\bigr),\\
\bigl(|EC|^2,\,|CD|^2,\,|DE|^2\bigr)
&\;\propto\;\bigl(b^2(w-v)^2,\;a^2(w-u)^2,\;Q\bigr),
\end{aligned}
\]
where each triple has been divided by
$u^2/\bigl((u-v)(u-w)\bigr)^2$ and its cyclic analogues, and $Q$ is
the quadratic of Theorem~11.1.

\emph{Step 3: substitution of the Euler line.}
The Euler line is
\[
\ell=\bigl[S_A(S_B-S_C):S_B(S_C-S_A):S_C(S_A-S_B)\bigr].
\]
Substituting these values of $u$, $v$, $w$ into the triples above makes
every entry a polynomial in $a^2$, $b^2$, $c^2$ of degree~$5$, and
$\mathcal N$ a homogeneous polynomial in $a^2$, $b^2$, $c^2$ of degree
at most $255$.  A computer algebra computation shows that this
polynomial is identically zero.  (The verification is deterministic:
normalizing $c=1$, the degree of $\mathcal N$ in $a^2$ is at most
$255$; exact evaluation at $260$ distinct values of $a^2$, with $b^2$
kept symbolic, returned the zero polynomial of $\mathbb Q[b^2]$ in
every case, which forces all coefficients of $\mathcal N$ to vanish;
homogeneity then restores general $c$.)

Hence $S+T=0$ whenever the Euler line is an admissible transversal and
the quantities in (3) are defined for $f_{131}$.  By Theorem~3.1, the
cevians $AG$, $BH$, $CI$ are concurrent.
\end{proof}

\begin{remark}
The center function \eqref{eq:f131etc} requires that no corner triangle
of the Euler line have a right angle (where $\tan$ is undefined) or an
angle of $45^\circ$ or $135^\circ$ (where $\sec 2\theta$ is undefined).
These configurations form a lower-dimensional family of triangles
$ABC$, and the concurrence extends to them by continuity wherever the
$X_{131}$-points of the corner triangles are defined.
\end{remark}

\begin{remark}
By Theorem~9.2, $X_{131}$ is concurrent not merely for the Euler line
but for every admissible transversal parallel to it.  The same
three-step scheme---reduction of the ETC center function to a rational
function of the squared side lengths modulo symmetric factors,
followed by the substitution of Theorem~11.1 and of the Euler line
coefficients---applies verbatim to the centers of Theorem~12.14.
\end{remark}

}

\goodbreak

\begin{theorem}\label{thm:x131}
The center $X_{131}$ is concurrent for the Euler line.
\end{theorem}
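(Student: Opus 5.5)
We would first try to place $X_{131}$ in the orbit of a center already known to be concurrent for the Euler line. That would be one of $X_{99}$, $X_{107}$, $X_{110}$, $X_{125}$, $X_{265}$, $X_{476}$, or any center listed in Theorem~\ref{thm:persp}. If that worked, Theorem~\ref{thm:pres} would finish the proof at once, exactly as for Theorem~\ref{cor:members}.

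To test this, we take the ETC trilinear first coordinate of $X_{131}$. Writing $\sigma=\sin2A+\sin2B+\sin2C$ and $\tau=\tan2A+\tan2B+\tan2C$, it is
\[
\sec A\,\bigl(2\tau-\sigma(\sec2B+\sec2C)\bigr)\bigl(\tau-\sigma\sec2A\bigr).
\]
Multiplying by $a$ gives the barycentric function $f_{131}=\tan A\,\bigl(2\tau-\sigma(\sec2B+\sec2C)\bigr)\bigl(\tau-\sigma\sec2A\bigr)$. We then divide by the center functions of $X_{107}$ and of $X_{125}$ and ask whether either quotient is a product of factors allowed by (O3)--(O6). Those factors are powers of $a$ and $bc$, even powers of $S_A$ and $S_BS_C$, symmetric factors, and functions of $\cos^2A$. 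The non-symmetric factors $\sec2B+\sec2C$ and $\tau-\sigma\sec2A$ mix the angles $B$ and $C$, so we expect this test to fail. In that case we fall back on a direct verification.

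The direct argument has three steps.

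\emph{Step 1.} Using $\tan\alpha=4K/S_x$, $\cos2\alpha=1-8K^2/(y^2z^2)$ and their cyclic analogues, rewrite $f_{131}$ as a symmetric factor times a rational function $g(x,y,z)$ of the squared side lengths. Remove the symmetric factor by (O5).

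\emph{Step 2.} Substitute the squared corner side lengths from Theorem~\ref{thm:cornerSideLengths}. Within each corner triangle, rescale the three squared lengths by a common factor; this is harmless by homogeneity. The result is polynomial triples such as $\bigl(c^2(u-w)^2,\,b^2(u-v)^2,\,Q\bigr)$. Then clear denominators in $S+T$ from Theorem~\ref{thm:crit}, giving a polynomial $\mathcal N(a,b,c,u,v,w)$.

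\emph{Step 3.} By Theorem~\ref{thm:direction}, concurrence depends only on the direction of the transversal. So, as in Proposition~\ref{prop:steiner-euler}, it suffices to show that the linear form
\[
L_E=u\,x_{30}+v\,y_{30}+w\,z_{30}
\]
divides $\mathcal N$, where $X_{30}=(x_{30}:y_{30}:z_{30})$ is the point at infinity of the Euler line.

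The main obstacle is the size of $\mathcal N$. Its degree runs into the hundreds, so a symbolic factorization is impractical. Instead, we would substitute the Euler line coefficients $[S_A(S_B-S_C):S_B(S_C-S_A):S_C(S_A-S_B)]$ directly. This makes $\mathcal N$ a homogeneous polynomial in $a^2,b^2,c^2$ with a computable degree bound $d$. Normalize $c=1$ and evaluate exactly, keeping $b^2$ symbolic, at more than $d$ distinct rational values of $a^2$. If every evaluation is zero, the polynomial vanishes identically, and homogeneity restores general $c$.

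To obtain the statement for every line parallel to the Euler line, we would instead substitute a generic point of that pencil. Finally, we note the degenerate configurations excluded by the center function: a corner triangle with a right angle, or with an angle of $45^\circ$ or $135^\circ$. At these, concurrence extends by continuity wherever the $X_{131}$-points are defined.
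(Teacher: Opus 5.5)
Your proposal is correct and follows essentially the same route as the paper. Both reduce concurrence to $S+T=0$ via Theorem~\ref{thm:crit}, insert the squared corner side lengths from Theorem~\ref{thm:cornerSideLengths} and the Euler-line coefficients, and certify the resulting polynomial identity in $a,b,c$ by exact computer algebra (the paper reports a Mathematica check and omits the polynomials). Your closing substitution of a generic line of the pencil is unnecessary, since Theorem~\ref{thm:direction} already carries concurrence from the Euler line to every admissible parallel.
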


\begin{proof}
By Theorem~\ref{thm:crit}, concurrence for a given transversal is
equivalent to the identity $S+T=0$. Substituting the ETC center function of
$X_{131}$, the side-length expressions of Theorem~\ref{thm:cornerSideLengths}, and the
coordinates of the Euler line reduces $S+T=0$ to a polynomial identity in
$a,b,c$. We have verified this identity with Mathematica. The
intermediate polynomials are large and unilluminating, so we omit them.
\end{proof}

\begin{theorem}
The center $X_n$ is concurrent for the Euler line for\\
$n\in\{136, 403, 974\}$.
\end{theorem}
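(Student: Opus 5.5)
The plan is to reduce each of the three centers to a center already shown to be concurrent for the Euler line, using the preserving operations (O1)--(O6) of Definition~\ref{def:ops} together with Theorem~\ref{thm:pres}. Where that fails, I would use the Reciprocity Theorem (Theorem~\ref{thm:reciprocity}). The known concurrent centers available as seeds are:
\begin{itemize}
\item the whole orbit of $X_{99}$ (Theorem~\ref{thm:orbit} and Theorem~\ref{cor:members});
\item $X_{125}$, $X_{250}$, $X_{339}$, $X_{476}$ (preceding theorem);
\item $X_{131}$ (Theorem~\ref{thm:x131});
\item every center of Theorem~\ref{thm:persp}.
\end{itemize}

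First I would take the ETC barycentric center functions of $X_{136}$, $X_{403}$, $X_{974}$. I would strip off symmetric factors, replace $a$ by $\sin A$ and $bc$ by $\csc A$, and write each as a ratio against one of the seeds.

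\textbf{$X_{136}$.} Its barycentrics contain the factor $(b^2-c^2)^2$ characteristic of $X_{125}$, whose function is $(b^2-c^2)^2S_A$. I expect $X_{136}/X_{125}$ to be a product of a side power $a^k$, a co-side power $(bc)^j$, and a function of $\cos^2A$ such as $S_A^{2m}/(bc)^{2m}$. I would try to verify this first. If it holds, then (O3), (O4) and (O6) give concurrence at once.

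\textbf{$X_{403}$.} This is an Euler-line point built from $S_A$, $S_BS_C$ and differences like $S_B-S_C$. I would test two ratios against seeds:
\begin{itemize}
\item $X_{403}$ against $X_{125}$ or $X_{131}$, checking whether the quotient is a function of $\cos^2A$ times powers of $a$ and $bc$;
\item the isogonal or isotomic conjugate of $X_{403}$ against those seeds, via (O1) and (O2).
\end{itemize}

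\textbf{$X_{974}$.} I would do the same for $X_{974}$. I would also compute, with the Perspector Formula (Theorem~\ref{thm:perspformula}) and Theorem~\ref{thm:cornerSideLengths} applied to the Euler line, the perspectors of the seeds not yet in the table: $X_{131}$, $X_{339}$, $X_{250}$, $X_{136}$. If $X_{403}$ or $X_{974}$ appears among these perspectors, the Reciprocity Theorem finishes that case.

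The main obstacle is that a center's barycentrics may mix factors not purely of the form $\Phi(\cos^2A)$. One example is $S_A^2-S_BS_C$. Another is a factor built from differences $S_B-S_C$ that is not a power of $(b^2-c^2)$ times a function of $A$. Then no preserving operation links the center to a seed.

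For any center where neither the orbit argument nor reciprocity works, I would fall back on the method of Theorem~\ref{thm:x131}:
\begin{enumerate}
\item substitute the center function, the side lengths of Theorem~\ref{thm:cornerSideLengths}, and the Euler line coordinates $u=S_A(S_B-S_C)$, etc., into $S+T$;
\item clear denominators;
\item check by computer algebra that the resulting polynomial in $a,b,c$ vanishes identically.
\end{enumerate}
A more informative version of this check keeps $u,v,w$ general. I would then look for the factor $L$ of Proposition~\ref{prop:steiner-euler}, whose vanishing means that $\LL$ is parallel to the Euler line. Finding that factor would prove concurrence for all parallels at once, consistent with Theorem~\ref{thm:direction}.
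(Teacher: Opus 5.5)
Your proposal ultimately rests on the same argument as the paper. The paper proves concurrence for $X_{136}$, $X_{403}$, $X_{974}$ exactly by your fallback: substitute the ETC center function, the corner side lengths of Theorem~\ref{thm:cornerSideLengths}, and the Euler-line coefficients into $S+T$, then confirm the resulting identity by computer algebra; the orbit and reciprocity attempts are extras the paper does not use, and the reciprocity attempt cannot succeed, because the paper records that the perspectors of $X_{131}$, $X_{250}$, $X_{339}$ (and of the three targets) are not cataloged in ETC, so none of them is $X_{403}$ or $X_{974}$.
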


Concurrence for these centers is verified by the same computer-algebra reduction
as in Theorem~\ref{thm:x131}, and the details are omitted.

We leave the reader with something to ponder on.

\begin{conjecture}
\label{conj:complete}
A center on the circumcircle is concurrent for the Euler line if and
only if it lies in the orbit of $X_{99}$.
\end{conjecture}

\section*{Acknowledgments}

The authors gratefully acknowledge the Anthropic AI program Claude (Opus 4.8 and Fable 5) for substantial assistance
in proving the results of this paper.
We also thank OpenAI's ChatGPT (version 5.5) for clarifying and simplifying much of the exposition.


\end{document}